\theoremstyle{plain}
\newtheorem{theorem}{Theorem}[section]
\newtheorem{definition}[theorem]{Definition}
\newtheorem{lemma}[theorem]{Lemma}
\newtheorem{proposition}[theorem]{Proposition}
\newtheorem{preremark}[theorem]{Remark}
\newenvironment{remark}{\begin{preremark}\normalfont}{\end{preremark}}
\definecolor{myblue}{cmyk}{1,0.7,0,0}
\definecolor{mylightblue}{cmyk}{0.5,0.3,0,0}
\definecolor{mypaleblue}{cmyk}{0.08,0.05,0,0}
\definecolor{mypurple}{cmyk}{0.6,1,0,0}
\definecolor{myorange}{cmyk}{0,0.83,1,0.5}
\definecolor{mygreen}{cmyk}{0.9,0,1,0.2}
\definecolor{myred}{cmyk}{0,1,1,0.1}
\definecolor{mycolor}{cmyk}{0,0.5,1,0.5}
\definecolor{mymint}{cmyk}{0.93,0,0.75,0}
\newcommand{\A}{\mathcal A}
\newcommand{\B}{\mathcal B}
\renewcommand{\l}{\mathcal L}
\renewcommand{\r}{\mathcal R}
\newcommand{\e}{\mathcal E}
\newcommand{\F}{\mathcal F}
\renewcommand{\H}{\mathcal H}
\newcommand{\R}{\mathbb R}
\title[Quasilinear Parabolic Equations]{Local Behavior of Solutions of Quasilinear Parabolic Equations on metric spaces}
\author{Janna Lierl}
\begin{document}

\begin{abstract}
We introduce a notion of quasilinear parabolic equations over metric measure spaces. Under sharp structural conditions, we prove that local weak solutions are locally bounded and satisfy the parabolic Harnack inequality. Applications include the parabolic maximum principle and pointwise estimates for weak solutions. 
\end{abstract}

\maketitle

\section{Introduction}
In their 1967 paper \cite{AS67}, Aronson and Serrin proved the parabolic Harnack inequality for weak solutions $u = u(t,x)$ of the quasilinear equation
\begin{align*}
\int_Q\int \left( -u \phi_t + \phi_x \cdot \A(t,x,u,u_x) \right) dx \, dt = \int_Q\int \phi \, \B(t,x,u,u_x) dx \, dt,
\end{align*}
provided that $\A$ and $\B$ satisfy certain structural conditions. Here, $x$ is in Euclidean space $\R^n$, $u_x$ is the spatial gradient of $u$, and $\phi= \phi(t,x)$ is any continuously differentiable test function having compact support in $Q$. 

At about the same time, Trudinger \cite{Tru68} and Ladyzhenskaja, Solonnikov and Uralceva \cite{LSU68} proved very similar results. 

The present paper introduces a notion of quasilinear equations over metric measure spaces and proves the parabolic Harnack inequality under certain hypotheses on the structure of the equation and natural conditions on the geometry of the underlying space. In particular, the parabolic Harnack inequality holds for quasilinear equations on metric measure spaces that satisfy volume doubling, Poincar\'e inequality and the cutoff Sobolev inequality. In the case of the linear heat equation, these are known to be equivalent to the parabolic Harnack inequality, as well as to sharp two-sided heat kernel estimates  \cite{Gri91,SC92,SturmIII,BBK06}. 

Concerning the structural hypotheses, we follow \cite{AS67, LSU68, Tru68} by assuming that a quasilinear equation should be represented in terms of some ''divergence form part" $\A$ and a ``lower order part" $\B$. This is somewhat contrary to the approach in \cite{SturmII, LierlPHI, LierlPHIf} that was based on the bilinearity and a structural decomposition of bilinear forms, in addition to quantitative inequalities.

Though the main interest of this work is likely to be in the context of a reference Dirichlet space, some of our results - the mean value estimates and the local boundedness of weak solutions - apply also to subelliptic operators such as the Kolmogorov-Fokker-Planck operator. 

The hypotheses that we impose on the structure of the equation are, in a certain sense, sharp. This is because we use Lorentz spaces rather than $L^p$-spaces.
For the special case of quasilinear operators on Euclidean space this means that we recover the parabolic Harnack inequality of Aronson - Serrin but under slightly weaker  - and sharp - integrability conditions on the coefficients. 

There already exists a quite broad literature that applies the parabolic Moser  iteration \cite{Mos64} in a non-Euclidean, linear setting, beginning with \cite{Gri91,SC92,SC02}  on Riemannian manifolds, \cite{SturmII, DeLeva, LierlPHI} on Dirichlet spaces that admit a carr\'e du champ, \cite{LierlPHIf} on fractal-type Dirichlet spaces. The proof in the present paper is based on some of these earlier works (as well as \cite{AS67}) but self-contained and aims to give full attention to technical issues pertaining to the existence and local boundedness of weak solutions, the appropriate function spaces, and the structural hypotheses.  

A different direction concerns the generalization of the equation rather than the underlying space. A class of degenerate elliptic operators (so-called generalized Kimura diffusion operators) is studied in \cite{EpMa16} and covered by the setting of the present paper. For quasilinear subelliptic operators, the parabolic Harnack inequality is proved in \cite{CaCiRe13}.

Degenerate subelliptic operators such as those of Kolmogorov-Fokker-Planck type are of interest as they indicate the margin of the wide scope of Moser's iteration. While mean value estimates follow by Moser iteration (see, e.g., \cite{CiPo08}),  the second part of Moser's iteration does not seem to apply due to the lack of a Poincar\'e inequality as well as a lack of the proper structure of the equation (cf.~ our hypothesis H.2 and Section \ref{ssec:Kolmogorov}).

We mention that there are alternative ways to obtain the parabolic Harnack inequality from volume doubling and Poincar\'e inequality, for instance by using elliptic Moser iteration, see e.g. \cite{BBK06}. This is of interest especially in time-independent settings.
A variational approach to the parabolic Harnack inequality on metric measure spaces is taken in \cite{KMMP12} under the hypothesis that weak solutions (i.e.~parabolic minimizers) already satisfy the Cacciopoli-type estimates.

Our main results are in part motivated by an application to the study of heat kernels on inner uniform domains similar to \cite{GyryaSC}. For certain non-symmetric heat kernels, Doob's transform yields a heat equation whose structure is not covered by \cite{LierlPHI} due to unbounded coefficients, but does satisfy our structural hypotheses H.1 and H.2.

\textbf{Structure of the paper.} In the first part of the paper we introduce the notion of quasilinear equations on abstract spaces (Section \ref{sec:quasilinear}), prove Cacciopoli-type estimates and mean value estimates (Section \ref{sec:MVE}) and the parabolic Harnack inequality (Section \ref{sec:PHI}).
 
In the second part of the paper (Section \ref{sec:examples}) we discuss examples. First, we apply our main results to Dirichlet spaces satisfying volume doubling and Poincar\'e inequality. 
We show that if a quasilinear form is ``adapted" to a reference Dirichlet form (see Definition \ref{def:adapted form}), then our structural hypotheses H.1, H.2 are satisfied. Being adapted to a Dirichlet form is a property that should be easy to check in applications. In the Dirichlet space context, we provide several applications of the Harnack inequality: the H\"older continuity of weak solutions, the parabolic maximum principle, and pointwise estimates for weak solutions.

Second, we apply our results to quasilinear operators on Euclidean space and discuss the sharp conditions on the coefficients in comparison to the structural hypotheses in \cite{AS67}.

Third, we consider a Kolmogorov-Fokker-Planck operator. This example illustrates that there is an actual difference between our hypothesis H.1 and hypothesis H.2.

Finally, we emphasize the relevance of the metric measure space setting by combining our main result with a metric measure transform.

\textbf{Acknowledgement.}
TBA

\section{Quasilinear forms, structural and geometric hypotheses} \label{sec:quasilinear}

\subsection{Quasilinear forms and weak solutions}
Let $X$ be a locally compact separable Hausdorff space and $\mu$ a locally finite Borel measure with full support on $X$. 

Let $\F$ be a linear subspace of $L^2(X,\mu)$ such that
\begin{enumerate}
\item
$\F$ is dense in $L^2(X,\mu)$.
\item 
There is a norm $\| \cdot \|_{\F}$ so that $(\F,\|\cdot\|_{\F})$ is a Banach space and $\| f \|_{\F} \ge \| f \|_{L^2}$ for every $f\in \F$.
\item
$\F \cap \mathcal C_{\mbox{\tiny{c}}}(X)$ is dense in $(\mathcal C_{\mbox{\tiny{c}}}(X),\| \cdot \|_{\infty})$ and dense in $(\F,\|\cdot\|_{\F})$.
\item 
$\F_{\mbox{\tiny{b}}}:= \F \cap L^{\infty}(X)$ is an algebra.
\item 
If $f \in \F$ then $(f \vee 0) \in \F$ and $(f \wedge 1) \in \F$.
\item
If $f \in \F$ then $\Phi(f) \in \F$ for any function $\Phi \in \mathcal{C}^1(\R^m)$ with $\Phi(0)=0$, where $m$ is a positive integer.
\item 
$\F$ contains cutoff functions: for every open $U \subset X$ and every compact $K \subset U$ there exists a continuous function $\psi:X \to [0,1]$ in $\F$ that takes value $1$ on $K$ and value $0$ on $X \setminus U$.
\end{enumerate}

Here, $\mathcal C$ denotes the space of continuous functions, $\mathcal C_{\mbox{\tiny{c}}}$ are continuous functions with compact support, and $\F_{\mbox{\tiny{c}}}$ will be the functions in $\F$ with compact support. Further, $a \wedge b = \min\{a,b\}$ and $a \vee b = \max\{a,b\}$.

\begin{definition} \label{def:quasilinear form}
We call a collection of maps $\e_t:\F \times \F \to \R$, $t \in \R$, a {\em quasi-linear form} if 
\begin{enumerate}
\item
there exist signed measure valued forms $\A_t$ and $\B_t$ such that
\[ \e_t(u,g) = \int d\A_t(u,g) + \int d\B_t(u,g), \quad \mbox{ for all } u,g \in \F. \]
\item the maps
$t \mapsto \int d\A_t(u,g)$ and $t \mapsto \int d\B_t(u,g)$ are measurable for all $u,g \in \F$.
\item (right-linearity)
$\A_t(u,g_1 + g_2) = \A_t(u,g_1) + \A_t(u,g_2)$ and $\A_t(u,cg_1) = c \A_t(u,g_1)$ for all $u,g_1, g_2 \in \F$, $c \in \R$. Similarly for $\B_t$.
\item ($\B_t$ is right-local)
$\B_t(u,g)= 0$ whenever $u,g \in \F$ with $g=0$ on the support of $u$.
\item ($\A_t$ is right-strongly local)
$\A_t(u,g)= 0$ whenever $u,g \in \F$ with $g$ constant on the support of $u$.
\item (right-product rule)
$d\A_t(u,fg)= g \, d\A_t(u,f) + f \, d\A_t(u,g)$ and
$d\B_t(u,fg)= g \, d\B_t(u,f)$ whenever $u,f,g, fg \in \F$.
\item
(right-chain rule)
for any $u, g_1, g_2, \ldots, g_m \in \F_{\mbox{\tiny{b}}}$, $g = (g_1, \ldots, g_m)$, and $\Phi \in \mathcal C^1(\R^m)$ with $\Phi(0)=0$, we have $\Phi(g) \in \F_{\mbox{\tiny{b}}}$ and
\begin{align} \label{eq:chain rule for Gamma}
d\A_t(u,\Phi(g)) = \sum_{i=1}^{m} \frac{\partial \Phi}{\partial x_i} (g) \, d\A_t(u,g_i).
\end{align}
\item (right-continuity)
For every $t \in \R$ there exists an open interval $I \ni t$ such that for any $u \in L^2( I \to \F) \cap \mathcal C(I \to L^2)$ and there is a constant $C(u,t)$ such that 
 \[ |\e_s(u(s,\cdot),g)| \le C(u,t) \| g \|_{\F} \quad \mbox{ for all } s \in I, g \in \F_{\mbox{\tiny{c}}}. \]
\end{enumerate}
\end{definition}

Let $I$ be a bounded open interval and $U\subset X$ open.
Let $L^2_{\mbox{\tiny{loc}}}(I \to \F;U)$ be the space of all functions $u:I \times U \to \R$ such that for any open interval $J$ relatively compact in $I$, and any open subset $A$ relatively compact in $U$, there exists a function $u^{\sharp} \in L^2(I \to \F)$ such that $u^{\sharp} = u$ a.e.~in $J \times A$. If all these $u^{\sharp}$ are in $\mathcal C(I \to L^2(U))$, then we write $u \in L^2_{\mbox{\tiny{loc}}}(I \to \F;U) \cap \mathcal C_{\mbox{\tiny{loc}}}(I \to L^2(U))$.

\begin{definition}
 Set $Q = I \times U$. A map $u: Q \to \R$ is a \emph{local weak subsolution} of the heat equation for $\e_t$ in $Q$ if
\begin{enumerate}
\item
$u \in L^2_{\mbox{\tiny{loc}}}(I \to \F;U) \cap \mathcal C_{\mbox{\tiny{loc}}}(I \to L^2(U))$,
\item 
For almost every $a,b \in I$ with $a < b$, and any non-negative $\phi \in \F_{\mbox{\tiny{c}}}(U)$,
\begin{align*}
 \int u(b)  \phi \, d\mu - \int u(a)  \phi \, d\mu + \int_a^b \e_t(u(t),\phi) dt
\leq 0.
\end{align*}
\end{enumerate} 
A map $u$ is a {\em local weak supersolution} if $-u$ is a local weak subsolution. If both $u$ and $-u$ are local weak subsolutions then $u$ is called a {\em local weak solution}.
\end{definition}
 
It is worth to remark that local weak {\em solutions} can equivalently be defined using weak time-derivatives, see \cite[Proposition 7.8]{LierlPHI}.

\subsection{Structural hypotheses} \label{ssec:structural hypotheses}
For the rest of the paper, we fix $\delta^* \in (0,1)$.
Let $(B_{\delta})$ be a collection of relatively compact open subsets of $X$ such that $B_{\delta'} \subsetneq B_{\delta}$ whenever $\delta^* \le \delta' < \delta \le 1$.


Fix $a < a' < b' < b$. 
For $\delta \in [\delta^*,1]$, let $I^-_{\delta} = (a-a_{\delta},b')$ be a strictly increasing sequence of bounded open intervals. Let $I^- = (a-a_1,b)$, $Q^- = I^- \times B_1$ and $Q^-_{\delta} = I^-_{\delta} \times B_{\delta}$.
We also define $I^+_{\delta} = (a',b+a_{\delta})$, $I^+ = (a,b+a_1)$, $Q^+ = I^+ \times B_1$, and $Q^+_{\delta} = I^+_{\delta} \times B_{\delta}$.

We assume there are constants $C_3,k \in (0,\infty)$ such that
\begin{align} \label{eq:C_3}
|a_{\delta} - a_{\delta'}|^{-1} \le C_3 |\delta-\delta'|^{-k}
\end{align}
for all $\delta^* \le \delta' < \delta \le 1$. 

Let $\kappa \ge 0$ and define $\bar v = \max(v,0)+\kappa$ and $\bar v_{\varepsilon} = \bar v + \varepsilon$ for $\varepsilon \in (0,1)$.
For $p < 2$, let
\begin{align*}
\mathcal{H}(v) := 
\begin{cases}
 \frac{1}{p} \bar v^p, \quad & p \ne 0 \\
 \log \bar v, & p = 0.
 \end{cases}
\end{align*} 
Then $\mathcal  H$ is twice continuously differentiable on $(0,\infty)$.

For $p \ge 2$ and positive integers $n$, define also
\begin{align*}
\mathcal{H}_n(v) 
& =  \frac{1}{2} \bar v^2 (\bar v \wedge n)^{p-2} + \left( \frac{1}{p} - \frac{1}{2} \right) (\bar v \wedge n)^p - \bar v \kappa^{p-1}  + \frac{p-1}{p} \kappa^p.
\end{align*}
Then $\mathcal  H_n$ has one continuous derivative
\begin{align*}
\mathcal{H}_n'(v) = 
 \bar v (\bar v \wedge n)^{p-2} - \kappa^{p-1}
\end{align*} 
on $(0,\infty)$.
For non-negative functions $u$ we will write $u_n = u \wedge n$.


Fix $\eta \in (0,1)$. We say that H.1a, H.1b, or H.2, respectively, hold for $u \in L^2_{\mbox{\tiny{loc}}}(I \to \F;B_1)$, if there is a positive Radon measure $d\Gamma(u)$ and constants $\beta \in [1,\infty)$, $a \in (0,1)$ such that for all $\delta^* \le \delta' < \delta \le 1$ there exist constants $C_1=C_1(\delta',\delta) \in (0,\infty)$ and $C_2 \in (0,\infty)$, such that
\begin{align} \label{eq:subsol estimate decomposition}  \tag{\bf H.1a}
\begin{split}
& \quad \int_I \left[ -  \e_t( u, \mathcal H'_n(u) \psi^2 )  +  \frac{a}{2} \int \bar u_n^{p-2} \psi^2 d\Gamma(u)  
 +  \frac{p-2}{4}a \int_{\{\bar u \le n\}} \bar u_n^{p-2} \psi^2 d\Gamma(u) \right] \chi dt\\
& \le 
p C_1 |||\bar u \bar u_n^{\frac{p-2}{2}} \psi \chi^{1/2} |||_{I \times B_1}^2  
 + p^{\beta} C_2 |\delta'-\delta|^{-k}  \int_I \int \bar u^2 \bar u_n^{p-2} \psi \, d\mu \, \chi dt
\end{split}
\end{align}
for all positive integers $n$ and all $p \in [2,\infty)$, and any smooth function $\chi:I \to [0,1]$.
\begin{align}  \tag{\bf H.1b}
\begin{split}
& \quad \int_I \left[ \frac{1-p}{|1-p|} \e_t( u, \mathcal H'(u_{\varepsilon}) \psi^2 )  + \frac{|p-1|}{4}a \int \bar u_{\varepsilon}^{p-2} \psi^2 d\Gamma(u) \right] \chi dt \\
& \le 
(1 \vee |p|) C_1 ||| \bar u_{\varepsilon}^{\frac{p}{2}}\psi  |||_{I \times B_1}^2  
 + (1 \vee |p|^{\beta}) C_2 |\delta'-\delta|^{-k} \int_I \int \bar u_{\varepsilon}^p \psi d\mu \,  dt,
\end{split}
\end{align}
for all $\varepsilon \in (0,1)$, all $p \in (-\infty,0) \cup (0,1-\eta) \cup (1+\eta,2)$, any smooth function $\chi:I \to [0,1]$, provided that $u$ is non-negative and locally bounded.
\begin{align}  \tag{\bf H.2}
\begin{split}
& \quad - \e_t( u, \mathcal H'(u_{\varepsilon}) \psi^2 )  +  a \int \bar u_{\varepsilon}^{-2} \psi^2 d\Gamma(u) \\
& \le 
\sum_{i=1}^m D_i(t) \| \psi \|_{2r_i',2}^2 + C_2 |1-\delta^*|^{-k} \int \psi d\mu, \quad \mbox{ for a.e. } t \in I,
\end{split}
\end{align}
for all $\varepsilon \in (0,1)$, $p=0$, provided that $u$ is non-negative and locally bounded. Here, $m$ is a positive integer and  for each $i$, $D_i$ is in $L^{q_i}(I)$ and the pair $(r_i',q_i')$ has H\"older conjugates $(r_i,q_i)$ satisfying \eqref{eq:gamma}.

The function $\psi= \psi_{\delta',\delta}$ in H.1 and H.2 is a cutoff function for $B_{\delta'}$ in $B_{\delta}$ and we will assume throughout the paper that it is the same cutoff function as the one appearing in the weighted Sobolev inequality \eqref{eq:wSI}, see Definition \ref{def:wSI}. The norm $||| \cdot |||$ is defined below in \eqref{eq:||| norm}.


A sufficient condition for a quasilinear form to satisfy H.1 and H.2 is that the quasilinear form is adapted to a reference Dirichlet form in the sense of Definition \ref{def:adapted form} below. In many applications, the adaptedness is easy to verify. The sufficiency is proved in Section \ref{ssec:adapted forms}.

\subsection{Lorentz spaces} 

For Borel measurable functions $f:X \to \R$, the Lorentz quasi-norms are defined as
\begin{align*}
\| f \|_{r,r_1} = \left( r_1 \int_0^{\infty} (s^r \mu(|f| \ge s))^{r_1/r} \frac{ds}{s} \right)^{\frac{1}{r_1}}
\end{align*}
for $r,r_1 \in (0,\infty)$, and
\begin{align*}
\| f \|_{r,\infty} = \sup_{s>0} \left\{ s \, \mu(|f| \ge s)^{1/r} \right\}, \quad \| f \|_{\infty,\infty} = \| f \|_{\infty}.
\end{align*}
Observe that $\| f \|_{r,r} = \| f \|_r$.
The Lorentz space $L^{r,r_1}$ is defined as the collection of Borel measurable functions $f:X \to \R$ with $\| f \|_{r,r_1} < \infty$.

For any $\sigma >0$, it holds
\begin{align} \label{eq:lorentz power}
\|f^{\sigma}\|_{r,r_1}^{\frac{1}{\sigma}} 
= \|f\|_{\sigma r,\sigma r_1}. 
\end{align}
Furthermore,
\begin{align} \label{eq:lorentz monotonicity}
\| f \|_{r,r_2} \le 2^{2/r_1} \| f \|_{r,r_1}, \quad \mbox{ for all } 0 < r_1 \le r_2 \le \infty,
\end{align}
see, e.g., \cite[(4.3)]{BCLSC95}.

The Lorentz-H\"older inequality (\cite[Theorem 3.5]{ONeil63})
\begin{align} \label{eq:lorentz-hoelder}
\| fg \|_{r,r_1} \le\| f \|_{a,a_1} \| g \|_{b,b_1}
\end{align}
holds whenever
\begin{align*}
\frac{1}{r} = \frac{1}{a} + \frac{1}{b}, \quad \frac{1}{r_1} = \frac{1}{a_1} + \frac{1}{b_1}.
\end{align*}
This and \eqref{eq:lorentz power} imply that
\begin{align} \label{eq:lorentz-hoelder f=g}
\| f \|_{r,r_1} \le\| f \|_{a,a_1}^{\sigma} \| f \|_{b,b_1}^{1-\sigma}
\end{align}
holds whenever
\begin{align*}
\frac{1}{r} = \frac{\sigma}{a} + \frac{1-\sigma}{b}, \quad \frac{1}{r_1} = \frac{\sigma}{a_1} + \frac{1-\sigma}{b_1}, \quad 0 \le \sigma \le 1.
\end{align*}

For any $r \ge 1$, we let $r'$ be its H\"older conjugate, and $\frac{r''}{2}$ be the H\"older conjugate of $\frac{r}{2}$. That is,
\[ \frac{1}{r} + \frac{1}{r'} = 1, \qquad \frac{1}{r} + \frac{1}{r''} = \frac{1}{2}. \] 

The next lemma is similar to \cite[Lemma 1]{AS67}. 

\begin{lemma} \label{lem:lorentz norm of u}
Let $w \in L^2(I \to \F(U)) \cap L^{\infty}(I \to L^2(U))$ and $\nu >2$.
Then $w$ is in $L^{2q'}(I \to L^{2r',2}(U))$ for all exponent pairs $(r',q')$ whose H\"older conjugates $(r,q)$ satisfy $1 - \frac{1}{q} - \frac{\nu}{2r} \ge \gamma \ge 0$.
 Moreover,
\begin{align*}
 \left(\int_I ||w||_{2r',2}^{2q'} dt \right)^{\frac{1}{q'}} 
& \le 
|I|^{\gamma} \left( \int_I \| w \|_{\frac{2\nu}{\nu-2},2}^2 dt \right)^{\frac{\nu}{2r}} 
\left( \sup_{t \in I} \|w\|_{2}^2 \right)^{1-\frac{\nu}{2r}},
\end{align*}
where the Lorentz quasi-norms are taken over $U$.
\end{lemma}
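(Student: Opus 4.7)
\medskip

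\noindent\textbf{Proof proposal.}
The plan is to obtain the bound pointwise in $t$ by interpolating the Lorentz norm $\|w(t,\cdot)\|_{2r',2}$ between the endpoints $L^{\frac{2\nu}{\nu-2},2}(U)$ and $L^{2,2}(U)=L^2(U)$, then integrate in time with H\"older's inequality, separating the ``$L^\infty_t L^2_x$'' factor that comes from the essential supremum assumption.

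First I would apply the Lorentz-H\"older interpolation \eqref{eq:lorentz-hoelder f=g} with the parameters
\[
(r,r_1)=(2r',2),\qquad (a,a_1)=\Bigl(\tfrac{2\nu}{\nu-2},2\Bigr),\qquad (b,b_1)=(2,2),
\]
and solve for $\sigma$. Since $\tfrac{1}{2}=\tfrac{\sigma}{2}+\tfrac{1-\sigma}{2}$ is automatic, the only non-trivial equation is
\[
\frac{1}{2r'}=\frac{1}{2}-\frac{1}{2r}=\frac{\sigma(\nu-2)}{2\nu}+\frac{1-\sigma}{2},
\]
which yields $\sigma=\tfrac{\nu}{2r}$. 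The hypothesis $1-\tfrac{1}{q}-\tfrac{\nu}{2r}\ge\gamma\ge 0$ forces in particular $\tfrac{\nu}{2r}\le 1$, so $\sigma\in[0,1]$ as required. This gives, for a.e.\ $t\in I$,
\begin{equation*}
\|w(t,\cdot)\|_{2r',2}\ \le\ \|w(t,\cdot)\|_{\frac{2\nu}{\nu-2},2}^{\nu/(2r)}\,\|w(t,\cdot)\|_{2}^{1-\nu/(2r)}.
\end{equation*}

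Next I would raise to the power $2q'$, integrate over $I$, and pull out the essentially bounded factor:
\begin{equation*}
\int_I\|w(t,\cdot)\|_{2r',2}^{2q'}\,dt\ \le\ \Bigl(\sup_{t\in I}\|w(t,\cdot)\|_{2}^{2}\Bigr)^{q'\bigl(1-\frac{\nu}{2r}\bigr)}\int_I\|w(t,\cdot)\|_{\frac{2\nu}{\nu-2},2}^{\nu q'/r}\,dt.
\end{equation*}
Then I apply H\"older in time with exponents $\tfrac{2r}{\nu q'}$ and its conjugate; this is legal precisely when $\tfrac{\nu q'}{r}\le 2$, which is equivalent to $\tfrac{1}{q'}\ge\tfrac{\nu}{2r}$, i.e.\ to $1-\tfrac{1}{q}-\tfrac{\nu}{2r}\ge 0$, which is guaranteed by the hypothesis. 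This produces
\begin{equation*}
\int_I\|w(t,\cdot)\|_{\frac{2\nu}{\nu-2},2}^{\nu q'/r}\,dt\ \le\ |I|^{1-\frac{\nu q'}{2r}}\left(\int_I\|w(t,\cdot)\|_{\frac{2\nu}{\nu-2},2}^{2}\,dt\right)^{\nu q'/(2r)}.
\end{equation*}

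Combining the two displays and taking the $(1/q')$-th root yields
\begin{equation*}
\left(\int_I\|w(t,\cdot)\|_{2r',2}^{2q'}\,dt\right)^{1/q'}\ \le\ |I|^{\frac{1}{q'}-\frac{\nu}{2r}}\left(\int_I\|w(t,\cdot)\|_{\frac{2\nu}{\nu-2},2}^{2}\,dt\right)^{\nu/(2r)}\Bigl(\sup_{t\in I}\|w(t,\cdot)\|_{2}^{2}\Bigr)^{1-\frac{\nu}{2r}},
\end{equation*}
and the exponent on $|I|$ equals $1-\tfrac{1}{q}-\tfrac{\nu}{2r}\ge\gamma$, so on a bounded interval this majorant absorbs into $|I|^{\gamma}$ as in the statement. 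The main conceptual step is just choosing the correct interpolation parameter $\sigma=\nu/(2r)$; the only technical point to keep an eye on is the admissibility of the time-H\"older exponent, which is what turns the structural condition $1-\tfrac{1}{q}-\tfrac{\nu}{2r}\ge 0$ into a sharp requirement rather than a convenience.
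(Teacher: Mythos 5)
Your argument is correct and is essentially the paper's own proof: both rest on the Lorentz interpolation \eqref{eq:lorentz-hoelder f=g} with $\sigma=\nu/(2r)$ combined with H\"older's inequality in time, the only difference being that you interpolate pointwise in $t$ first and then apply H\"older in $t$, whereas the paper applies H\"older in $t$ first and interpolates inside the time integral, with identical admissibility conditions and exponent bookkeeping. The only point to watch is your last remark: like the paper, whose proof in effect takes $\gamma = 1-\tfrac1q-\tfrac{\nu}{2r}$, replacing $|I|^{1-\frac1q-\frac{\nu}{2r}}$ by $|I|^{\gamma}$ is an absorption valid for $|I|\le 1$ (or up to a constant depending on an upper bound for $|I|$), which is how the estimate is used later.
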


\begin{proof}
By the H\"older inequality and \eqref{eq:lorentz-hoelder f=g},
\begin{align*}
 \left(\int_I ||w||_{2r',2}^{2q'} dt \right)^{\frac{1}{q'}} 
& \le 
|I|^{\gamma} \left( \int_I  \| w \|_{2r',2}^{2/\sigma} dt \right)^{\sigma} \\
& \le 
|I|^{\gamma} \left( \int_I \left( \| w \|_{\frac{2\nu}{\nu-2},2}^{\sigma} \|w\|_{2}^{1-\sigma} \right)^{2/\sigma} dt \right)^{\sigma} \\
& \le 
|I|^{\gamma} \left( \int_I \| w \|_{\frac{2\nu}{\nu-2},2}^2 dt \right)^{\sigma} 
\sup_{t \in I} \|w\|_{2}^{2(1-\sigma)},
\end{align*}
provided that $\gamma \ge 0$, $0 \le \sigma \le 1$, and
\begin{align*}
\frac{1}{q'} = \sigma+\gamma, \quad \frac{1}{2r'} = \frac{\sigma}{\frac{2\nu}{\nu-2}} + \frac{1-\sigma}{2}.
\end{align*}
These relations imply that
$\gamma = \frac{1}{q'} - \sigma = 1- \frac{1}{q} - \frac{\nu}{2r}$ and $\sigma = \frac{\nu}{2r}$.
\end{proof}

We fix $\gamma \in [0,1)$ and define
\begin{align} \label{eq:||| norm}
||| u |||_{I \times U} := \sup_{r',q'} \left(\int_I \|u\|_{L^{2r',2}(U)}^{2q'} dt \right)^{\frac{1}{2q'}},
\end{align}
where the supremum is taken over all pairs $(r',q')$ whose H\"older conjugates $(r,q)$ satisfy 
\begin{align} \label{eq:gamma}
1 - \frac{1}{q} - \frac{\nu}{2r} \ge \gamma, \quad r \ge \frac{1}{1-\gamma}.
\end{align}
Note that since $\gamma < 1$, 
\begin{align*}
||| u |||_{I \times U} = \sup_{r'',q''} \left(\int_I \|u\|_{L^{r'',2}(U)}^{q''} dt \right)^{\frac{1}{q''}}
\end{align*} 
where the supremum is taken over all $(r'',q'')$ whose corresponding pair $(r,q)$ satisfies
\begin{align*} 
1 - \frac{1}{q} - \frac{\nu}{2r} \ge \frac{\gamma}{2}, \quad r \ge \frac{2}{1-\gamma}.
\end{align*}
Since $\nu > 2$, \eqref{eq:gamma} implies that $r>1$. Also, $q >1 $, $r' \ge 1$, $r'' \ge 2$.

\subsection{Geometric hypotheses: weighted Sobolev inequality and weighed Poincar\'e inequality}

\begin{definition}
Let $\delta^* \le \delta' < \delta \le 1$.
A function $\psi:X \to [0,1]$ is a {\em cutoff function for $B_{\delta'}$ in $B_{\delta}$} if $\psi \in \F \cap \mathcal{C}_{\mbox{\tiny{c}}}(B_{\delta})$ and $\psi = 1$ in $B_{\delta'}$.
\end{definition}

\begin{definition} \label{def:wSI}
We say that the {\em weighted Sobolev inequality} holds for a non-negative function $f$ if there exist constants $k\ge 0$, $\nu>2$ and $C_{\mbox{\tiny{SI}}}$, $C_{SI0} \ge 1$ such that, for any $\delta^* \le \delta' < \delta \le 1$, there is a cutoff function $\psi=\psi_{\delta',\delta}$ for $B_{\delta'}$ in $B_{\delta}$ such that
\begin{align} \label{eq:wSI} \tag{\bf wSI}
\| f^{p/2} \psi \|_{\frac{2\nu}{\nu-2},2}^2 
\le  
|\delta-\delta'|^{-k} \left( C_{\mbox{\tiny{SI}}} \frac{p^2}{4} \int_{B_{\delta}} f^{p-2} \psi^2 d\Gamma(f) + C_{\mbox{\tiny{SI0}}}  \int_{B_{\delta}}  f^{p} d\mu \right),
\end{align}
for all $p \in \R$ with $f^{p/2} \in \F$.
The constants $C_{\mbox{\tiny{SI}}}$, $C_{SI0}$ may depend on $\delta^*$ but not on $\delta',\delta$.
\end{definition}

\begin{remark}
\begin{enumerate}
\item
The choice of the Lorentz parameter $r_1=2$ in \eqref{eq:wSI} is sharp, see \cite[Remark 4.5]{BCLSC95}.
\item 
There is no loss of generality in assuming that $k$ is the same exponent as in \eqref{eq:C_3}. 
\end{enumerate}
\end{remark}

\begin{definition}
We say that the {\em weighted Poincar\'e inequality} holds for $\log f$, where $f \in \F$ is a uniformly positive function, if there is a positive constant $C_{\mbox{\tiny{wPI}}}$ such that
\begin{align} \label{eq:weighted PI} \tag{\bf wPI}
 \int |\log f - (\log f)_B|^2 \psi^2 d\mu  \le 
\int C_{\mbox{\tiny{wPI}}}(\delta',\delta) \psi^2 f^{-2} d\Gamma(f),
\end{align}
where $(\log f)_B = \int \log f \psi^2 d\mu \big/ \int \psi^2 d\mu$ is the weighted mean of $f$ over $B$, and $\psi = \psi_{\delta',\delta}$.
\end{definition}

The Cacciopoli-type estimates and the mean value estimates rely only on the weighted Sobolev inequality. For the parabolic Harnack we need in addition the weighted Poincar\'e inequality which is used in the Lemma \ref{lem:log lemma}.

The next lemma is similar to \cite[Lemma 3]{AS67}. 

\begin{lemma} \label{lem:|||f|||}
Let $f \in L^2(I \to \F(B_1)) \cap L^{\infty}(I \to L^2(B_1))$. 
Suppose the weighted Sobolev inequality \eqref{eq:wSI} holds for $f(t)$ uniformly for all $t \in I$. 
Then for any $\delta^* \le \delta' < \delta \le 1$ and any $\sigma \ge 1$,
\begin{align*}
& \quad  |||f^{\sigma}|||_{I \times B_{\delta'}}^{\frac{2}{\sigma}} \\
& \le 
2 |I|^{\gamma}  \left(  \frac{C_{\mbox{\tiny{SI}}}}{|\delta-\delta'|^k}  \int_I\int  \psi^2 d\Gamma(f) dt +  \frac{C_{\mbox{\tiny{SI0}}}}{|\delta-\delta'|^k}  \int_I \int_{B_{\delta}} f^2 d\mu \, dt +  \sup_{t \in I} \| f \|^2_{L^2(B_{\delta})} \right),
\end{align*}
where $\psi = \psi_{\delta',\delta}$.
\end{lemma}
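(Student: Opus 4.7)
The plan is to combine three tools already set up: the cutoff $\psi=\psi_{\delta',\delta}$ from \eqref{eq:wSI}, the Lorentz power and monotonicity identities \eqref{eq:lorentz power}--\eqref{eq:lorentz monotonicity}, and the parabolic interpolation inequality Lemma \ref{lem:lorentz norm of u}. The overall strategy is to reduce a mixed norm of $f^\sigma$ on $B_{\delta'}$ to a mixed norm of $f\psi$ on $B_1$, and then apply Lemma \ref{lem:lorentz norm of u} followed by \eqref{eq:wSI} at the single exponent $p=2$.

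The first step uses $\psi\equiv 1$ on $B_{\delta'}$, $\mathrm{supp}\,\psi\subset B_\delta$, together with \eqref{eq:lorentz power} and \eqref{eq:lorentz monotonicity}:
\[
\|f^\sigma\|_{L^{2r',2}(B_{\delta'})} \le \|(f\psi)^\sigma\|_{L^{2r',2}(B_1)} = \|f\psi\|_{L^{2\sigma r',2\sigma}(B_1)}^{\sigma} \le 2^{\sigma}\,\|f\psi\|_{L^{2\sigma r',2}(B_1)}^{\sigma},
\]
valid because $\sigma\ge 1$ lets the second Lorentz index be raised from $2$ to $2\sigma$. Raising this inequality to the $2q'$-power, integrating over $t\in I$, extracting the $(\sigma q')^{-1}$-root, and taking the supremum over admissible $(r',q')$ converts the left side into $|||f^\sigma|||_{I\times B_{\delta'}}^{2/\sigma}$ and bounds it (up to an absolute constant) by a supremum of expressions $\bigl(\int_I\|f\psi\|_{L^{2\tilde r',2}}^{2\tilde q'}dt\bigr)^{1/\tilde q'}$ with $\tilde r'=\sigma r'$, $\tilde q'=\sigma q'$. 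I then apply Lemma \ref{lem:lorentz norm of u} to $w=f\psi$, which lies in $L^2(I\to\F(B_1))\cap L^\infty(I\to L^2(B_1))$ by hypothesis, and use the Young-type bound $a^\theta b^{1-\theta}\le a+b$ to collapse the right-hand side to $|I|^\gamma\bigl(\int_I\|f\psi\|_{L^{2\nu/(\nu-2),2}}^{2}dt+\sup_{t\in I}\|f\psi\|_{2}^{2}\bigr)$. Finally \eqref{eq:wSI} at $p=2$ controls $\|f\psi\|_{L^{2\nu/(\nu-2),2}}^2$ by $|\delta-\delta'|^{-k}\bigl(C_{\mbox{\tiny{SI}}}\int\psi^2 d\Gamma(f)+C_{\mbox{\tiny{SI0}}}\int f^2 d\mu\bigr)$, while $\sup_{t}\|f\psi\|_2^2\le\sup_t\|f\|_{L^2(B_\delta)}^2$ because $\psi\le 1$ and $\mathrm{supp}\,\psi\subset B_\delta$; the claim follows after absorbing the multiplicative constants into the factor $2$ on the right.

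The main technical subtlety I foresee is the bookkeeping of admissible exponent pairs under the rescaling $(\tilde r',\tilde q')=(\sigma r',\sigma q')$: this rescaling generally relaxes the admissibility condition \eqref{eq:gamma} defining the supremum, so not every pair $(\tilde r',\tilde q')$ appearing after rescaling automatically meets the hypothesis $1-1/\tilde q-\nu/(2\tilde r)\ge\gamma$ of Lemma \ref{lem:lorentz norm of u} with the paper-wide $\gamma$. The correct way to treat this is to apply Lemma \ref{lem:lorentz norm of u} with an adjusted nonnegative value of $\gamma$ for those pairs and to use $\gamma\in[0,1)$ from \eqref{eq:gamma} together with the monotonicity of $|I|^{(\cdot)}$ to recover the factor $|I|^\gamma$; the outer power $2/\sigma$ on the left exactly compensates for the Lorentz-index rescaling, so the final bound is $\sigma$-independent. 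This matching of exponents, rather than any analytic ingredient, is the step requiring the most care.
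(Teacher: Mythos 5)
Your proposal follows the paper's own proof almost step for step: rewrite the norm of $f^{\sigma}$ via \eqref{eq:lorentz power}, lower the second Lorentz index from $2\sigma$ to $2$ with \eqref{eq:lorentz monotonicity}, apply Lemma \ref{lem:lorentz norm of u} to the rescaled exponent pair, invoke \eqref{eq:wSI} only at $p=2$, and finish with Young's inequality and the supremum over admissible pairs; carrying the cutoff $\psi$ explicitly from the start is a cosmetic (and slightly cleaner) variant of what the paper does implicitly.

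The step you describe as ``bookkeeping'' is, however, exactly where the argument breaks down for general $\sigma \ge 1$, and the adjusted-$\gamma$ device you propose does not repair it. Conjugation does not commute with the dilation $(r',q')\mapsto(\sigma r',\sigma q')$: if $(\tilde r,\tilde q)$ are the H\"older conjugates of $(\sigma r',\sigma q')$, then $1-\tfrac{1}{\tilde q}-\tfrac{\nu}{2\tilde r}=\tfrac{1}{\sigma}\bigl[\bigl(1-\tfrac{1}{q}-\tfrac{\nu}{2r}\bigr)-\tfrac{\nu}{2}(\sigma-1)\bigr]$, which is negative as soon as $\sigma>1+\tfrac{2}{\nu}\bigl(1-\tfrac1q-\tfrac{\nu}{2r}\bigr)$; for instance $\gamma=0$, $\nu=4$, $(r,q)=(2,\infty)$, $\sigma=2$ gives the value $-1$. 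For such pairs there is no nonnegative adjusted $\gamma$ at all: the hypothesis of Lemma \ref{lem:lorentz norm of u} fails outright, equivalently $2\sigma r'>\tfrac{2\nu}{\nu-2}$, so $\|f\psi\|_{2\sigma r',2}$ is not an interpolate of $\|f\psi\|_{\frac{2\nu}{\nu-2},2}$ and $\|f\psi\|_{2}$, and $f\psi$ need not lie in the rescaled space at all. (Moreover, even when an adjusted exponent $\gamma_{\mathrm{new}}<\gamma$ does exist, replacing $|I|^{\gamma_{\mathrm{new}}}$ by $|I|^{\gamma}$ goes the wrong way unless $|I|\ge1$.) The obstruction is essential rather than notational: a bound of the asserted form, uniform in $\sigma$, would upon letting $\sigma\to\infty$ force $f$ to be essentially bounded on $I\times B_{\delta'}$ with an $H^1$-type bound, which does not follow from $f\in L^2(I\to\F)\cap L^{\infty}(I\to L^2)$ and \eqref{eq:wSI} when $\nu>2$ (consider a time-independent unbounded $W^{1,2}$ function on $\R^3$). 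To be fair, the paper's own proof glosses over the same point by simply asserting that the conjugates of $(\sigma r',\sigma q')$ again satisfy \eqref{eq:gamma}; both arguments are complete only when every admissible pair survives the rescaling, that is for $\sigma$ close enough to $1$ (roughly $\tfrac{\nu}{2}(\sigma-1)\le\gamma$, in particular for $\sigma=1$), and for larger $\sigma$ the claimed $\sigma$-uniform estimate requires additional input, not just a more careful matching of exponents.
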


\begin{proof}
For any pair $(r',q')$ whose H\"older conjugates $(r,q)$ satisfy \eqref{eq:gamma}, 
the H\"older conjugates of $(\sigma r',\sigma q')$ also satisfy \eqref{eq:gamma}, and
\begin{align*}
||f^{\sigma}||_{L^{2q'}(I \to L^{(2r',2)})}^{\frac{1}{\sigma}} = ||f||_{L^{2\sigma q'}(I \to L^{(2\sigma r',2\sigma)})} \le 2 ||f||_{L^{2\sigma q'}(I \to L^{(2\sigma r',2)})},
\end{align*}
where the estimate is due to \eqref{eq:lorentz monotonicity}. Thus, by Lemma \ref{lem:lorentz norm of u} and \eqref{eq:wSI},
\begin{align*}
& \quad 
||f^{\sigma}||_{L^{2q'}(I \to L^{(2r',2)}(B_{\delta'}))}^{\frac{1}{\sigma}} \\
& \le 
2 |I|^{\gamma} |\delta-\delta'|^{-\frac{k\nu}{2\sigma r}} \left(  C_{\mbox{\tiny{SI}}} \int_I\int  \psi^2 d\Gamma(f) dt +  C_{\mbox{\tiny{SI0}}}  \int_I \int_{B_{\delta}} f^2 d\mu \, dt \right)^{\frac{\nu}{2\sigma r}} \\
& \quad \left( \sup_{t \in I} \| f \|^2_{L^2(B_{\delta})} \right)^{1 - \frac{\nu}{2\sigma r}}.
\end{align*}
Now apply Young's inequality and take the supremum over all pairs $(r',q')$ whose H\"older conjugates satisfy \eqref{eq:gamma}.
\end{proof}

\section{Mean value estimates} \label{sec:MVE}

\subsection{Chain rule in the time variable}\label{ssec:steklov chain rule}

\begin{lemma}
Let $u$ be a local weak subsolution of the heat equation for $\e_t$ in $Q^-$. Let $p \ge 2$.
Let $\chi:I^- \to [0,1]$ be any smooth function. Let $\psi \in \F_{\mbox{\tiny{c}}}(B_1)$.
Then, for almost every $a-a_1 < s_0 < t_0 < b'$,
\begin{align} \label{eq:steklov subsol estimate p>1}
\begin{split}
& \quad \int_X \mathcal H_n( u(t_0) )  \psi^2 d\mu 
- \int_X \mathcal H_n( u(s_0) )  \psi^2 d\mu\\
& \le
- \int_J \e_t( u(t), \mathcal H'_n(u(t)) \psi^2 ) \chi(t) \, dt
+ \int_J \int_X  \mathcal H_n(u(t)) \psi^2 \chi' \, d\mu \, dt.
\end{split}
\end{align}
where $J = (s_0, t_0)$.
\end{lemma}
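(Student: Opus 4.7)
The plan is to establish this chain-rule inequality via Steklov averaging, the standard device for endowing a weak (sub)solution with enough time regularity to admit test functions of the form $\mathcal H_n'(u)\psi^2$, which themselves depend on $t$ through $u$. For $h>0$ small, set $u_h(t):=h^{-1}\int_t^{t+h}u(s)\,ds$. The hypothesis $u\in L^2_{\mbox{\tiny{loc}}}(I^-\to\F;B_1)\cap\mathcal C_{\mbox{\tiny{loc}}}(I^-\to L^2(B_1))$ gives $u_h\to u$ in both spaces on compact subintervals of $I^-$, with classical time derivative $\partial_tu_h=h^{-1}(u(\cdot+h)-u(\cdot))$ in $L^2(B_1)$. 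Differencing the weak subsolution inequality over $[t,t+h]$ and dividing by $h$ yields, for a.e.\ $t$ and every non-negative $\phi\in\F_{\mbox{\tiny{c}}}(B_1)$,
\begin{equation*}
  \int\partial_tu_h(t)\,\phi\,d\mu+\tfrac{1}{h}\int_t^{t+h}\e_s(u(s),\phi)\,ds\le 0,
\end{equation*}
which serves as a rigorous substitute for pointwise-in-$t$ differentiation of $u$.

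Next, insert $\phi=\mathcal H_n'(u_h(t))\psi^2$. For $n\ge\kappa$ one has $\mathcal H_n'\ge 0$, and since $\mathcal H_n\in\mathcal C^1(\R)$ with $\mathcal H_n(0)=0$, property (vi) of $\F$ gives $\mathcal H_n(u_h)\in\F_{\mbox{\tiny{b}}}$. The factor $\mathcal H_n'(u_h)\psi^2$ is placed in $\F_{\mbox{\tiny{c}}}(B_1)$ by first approximating $\mathcal H_n'$ by $\mathcal C^1$ functions $\Phi_\varepsilon$ with $\Phi_\varepsilon(0)=0$, applying (vi), multiplying by $\psi^2\in\F_{\mbox{\tiny{b}}}$ (using the algebra property (iv)), and passing $\varepsilon\downarrow 0$. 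Multiply the displayed inequality by $\chi(t)\ge 0$ and integrate over $J=(s_0,t_0)$. The classical chain rule in time, applied to the Steklov-regular $u_h$, gives $\partial_tu_h\cdot\mathcal H_n'(u_h)=\partial_t\mathcal H_n(u_h)$, and integration by parts in $t$ moves the derivative onto $\chi$, producing boundary contributions $\chi(t_0)\int\mathcal H_n(u_h(t_0))\psi^2\,d\mu-\chi(s_0)\int\mathcal H_n(u_h(s_0))\psi^2\,d\mu$ and a bulk term $-\int_J\chi'(t)\int\mathcal H_n(u_h(t))\psi^2\,d\mu\,dt$.

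Finally, let $h\downarrow 0$. Uniform-in-$t$ convergence $u_h\to u$ in $L^2(B_1)$ on $J$ (from the $\mathcal C_{\mbox{\tiny{loc}}}(I^-\to L^2)$ regularity) and continuity of $\mathcal H_n$ handle the $\mathcal H_n$-boundary and bulk integrals. For the form term $\int_J\chi(t)\,h^{-1}\!\int_t^{t+h}\e_s(u(s),\mathcal H_n'(u_h(t))\psi^2)\,ds\,dt$, I appeal to Lebesgue differentiation in $t$ together with the right-continuity (viii), right-linearity (iii), and right-chain rule (vii) of Definition \ref{def:quasilinear form} to pass $\mathcal H_n'(u_h(t))\psi^2\to \mathcal H_n'(u(t))\psi^2$ in $\F$-norm and thereby secure convergence of the form values. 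The displayed statement corresponds to the specialization $\chi(s_0)=\chi(t_0)=1$; for general $\chi$ the temporal boundary carries the natural $\chi(s_0),\chi(t_0)$ weights.

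The principal obstacle is exactly this last limit, namely controlling $\|\mathcal H_n'(u_h(t))\psi^2-\mathcal H_n'(u(t))\psi^2\|_{\F}$ uniformly enough in $t\in J$ to integrate against $\chi(t)$. The truncation at level $n$ keeps $\mathcal H_n'$ bounded and Lipschitz on $[\kappa,\infty)$, and $u_h\to u$ in $L^2(J\to\F)$ provides the raw input; combined with the product/chain rule structure of $\F$ (through the $\Phi_\varepsilon$-approximation above) this yields the desired $\F$-convergence. The inequality direction is inherited directly from the subsolution inequality, so no additional convexity argument is needed on this route; the hypothesis $p\ge 2$ enters only to guarantee $\mathcal H_n\in\mathcal C^1(\R)$ with $\mathcal H_n(0)=0$ and $\mathcal H_n'\ge 0$.
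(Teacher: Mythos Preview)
Your approach is essentially the same as the paper's: Steklov averaging, insertion of $\phi=\mathcal H_n'(u_h(t))\psi^2$ into the subsolution inequality, the chain rule $\partial_t u_h\cdot\mathcal H_n'(u_h)=\partial_t\mathcal H_n(u_h)$, and passage to the limit $h\to 0$ via Lebesgue differentiation of the Bochner integral (the paper cites \cite[Theorem 9]{DieUhl77} at this step) combined with the right-continuity (viii) to handle the form term. Your observation that the displayed inequality as stated tacitly requires $\chi(s_0)=\chi(t_0)=1$, and your extra care in approximating $\mathcal H_n'$ by $\mathcal C^1$ functions to place $\mathcal H_n'(u_h)\psi^2$ in $\F_{\mbox{\tiny c}}$, are both legitimate refinements the paper leaves implicit.
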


\begin{proof}
For a real number $0 < h < b-b'$, let
\[  u_h(t) := \frac{1}{h} \int_t^{t+h} u(s) ds,\]
be the Steklov average of $u$ at $t <b'$. Here, the integral is a Bochner integral over functions that take values in the Banach space $(\F,\|\cdot\|)$. By definition, $u_h(t) \in \F$. 

Since $u$ is a local weak subsolution,
\begin{align*}
& \quad  \int_X \mathcal H_n( u_h(t_0) )  \psi^2 d\mu - \int_X \mathcal H_n( u(s_0) )  \psi^2 d\mu\\
& =
\int_J \frac{d}{d t} \left( \int_X \mathcal H_n( u_h(t) ) \psi^2 \chi \, d\mu \right) dt 
\\
& = 
\int_J \frac{1}{h} \int_X \big( u(t+h) - u(t) \big) \mathcal H'_n(u_h(t)) \psi^2 \chi \,  d\mu \, dt 
 + \int_J \int_X  \mathcal H_n(u_h(t)) \psi^2 \chi' \, d\mu \, dt \\
& \leq 
- \int_J \frac{1}{h} \int_t^{t+h}\e_s( u(s), \mathcal H'_n(u_h(t)) \psi^2 ) ds \, \chi(t) dt
+ \int_J \int_X  \mathcal H_n(u_h(t)) \psi^2 \chi' \, d\mu \, dt.
\end{align*}
By \cite[Theorem 9]{DieUhl77}, $u_h(t)$ converges to $u(t)$ in $\F$ as $h \to 0$, at almost every $t$. Since the $\F$-norm dominates the $L^2$-norm, it follows that
\begin{align*}
\int_X \mathcal H_n( u_h(t) )  \psi^2 d\mu 
\longrightarrow 
\int_X \mathcal H_n( u(t) )  \psi^2 d\mu \quad \mbox{ at a.e. } t
\end{align*}
and
\begin{align*}
\int_J \int_X  \mathcal H_n(u_h) \psi^2 \chi' \, d\mu \, dt 
\longrightarrow 
\int_J \int_X  \mathcal H_n(u) \psi^2 \chi' \, d\mu \, dt
\end{align*}
as $h \to 0$ (passing to a subsequence if necessary).

It remains to show that $- \int_J \frac{1}{h} \int_t^{t+h}\e_s( u(s,\cdot), \mathcal H'_n(u_h(t,\cdot)) \psi^2 ) \chi(t) ds \, dt$ converges to $-\int_J \e_t( u(t,\cdot), \mathcal H'_n(u(t,\cdot)) \psi^2 )  \chi(t) dt$ as $h \to 0$. 
We have
\begin{align*}
\mathcal H'_n(u_h(t)) \psi^2  \longrightarrow  \mathcal H'_n(u(t)) \psi^2
\end{align*}
in $\F$ as $h \to 0$, at almost every $t$.
Hence, by the right-continuity of $\e_t$,
\begin{align*}
\left| \int_J \frac{1}{h} \int_t^{t+h}\e_s( u(s), [\mathcal H'_n(u_h(t)) - \mathcal H'_n(u(t))] \psi^2 ) ds \, \chi(t) dt \right| \longrightarrow 0,
\end{align*}
Applying \cite[Theorem 9]{DieUhl77} with $f(s) = \e_s( u(s), \mathcal H'_n(u(t)) \psi^2 )$, we see that
\begin{align*}
\int_J \frac{1}{h} \int_t^{t+h} \left| \e_s( u(s), \mathcal H'_n(u(t)) \psi^2 ) - \e_t(u(t), \mathcal H'_n(u(t)) \psi^2 ) \right| ds \, \chi(t) dt \longrightarrow 0.
\end{align*}
Indeed, $f$ is integrable due to the right-continuity of $\e_t$.
Combining the above and using the right-linearity of $\e_t$ completes the proof.
\end{proof}

\begin{lemma} \label{lem:steklov supsol}
Let $u$ be a non-negative locally bounded local weak supersolution of the heat equation for $\e_t$ in $Q^{\pm}$.  Let $p \le 1-\eta$.
Let $\chi:I^{\pm} \to [0,1]$ be any smooth function. Let $\psi \in \F_{\mbox{\tiny{c}}}(B_1)$.
Then, for any interval $J = (s_0,t_0) \subset I^{\pm}_1$,
\begin{align*} 
\begin{split}
& \quad \int_X \mathcal H( u_{\varepsilon}(t_0) )  \psi^2 d\mu 
- \int_X \mathcal H( u_{\varepsilon}(s_0) )  \psi^2 d\mu\\
& \ge
- \int_J \e_t( u(t), \mathcal H'(u_{\varepsilon}(t)) \psi^2 ) \chi(t) \, dt
+ \int_J \int_X  \mathcal H(u_{\varepsilon}(t)) \psi^2 \chi' \, d\mu \, dt.
\end{split}
\end{align*}
\end{lemma}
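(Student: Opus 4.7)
The plan is to follow the argument of the previous (subsolution) lemma essentially verbatim, with two systematic changes: replace the truncated $\mathcal H_n$ by $\mathcal H$ (so there is no truncation index to track), and reverse the sign of the key step in which the weak-equation inequality is invoked, reflecting that $u$ is now a supersolution (equivalently, $-u$ is a subsolution).

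First, I introduce the Steklov average $u_h(t) = \frac{1}{h}\int_t^{t+h} u(s)\,ds$. Since $u$ is non-negative and locally bounded, $u_{\varepsilon,h}(t)$ takes values in a compact subinterval $[\varepsilon+\kappa,M]\subset(0,\infty)$ on which $\mathcal H$ is $\mathcal C^2$ (being $\frac{1}{p}\bar v^p$ for $p\ne 0$ or $\log\bar v$ for $p=0$). After a standard modification of $\mathcal H'$ outside this interval so as to produce a $\mathcal C^1$ function on $\R$ vanishing at $0$, the chain rule axiom (vi) on $\F$ ensures that $\mathcal H'(u_{\varepsilon,h}(t))\psi^2$ lies in $\F_{\mbox{\tiny{c}}}(B_1)$ and is non-negative (since $\mathcal H'(\bar v_\varepsilon) = \bar v_\varepsilon^{p-1} > 0$).

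Next, I apply the defining inequality for a weak supersolution, which after unpacking the statement that $-u$ is a subsolution yields
\begin{align*}
\int u(t+h)\phi\,d\mu - \int u(t)\phi\,d\mu \;\ge\; -\int_t^{t+h} \e_s(u(s),\phi)\,ds
\end{align*}
for non-negative $\phi \in \F_{\mbox{\tiny{c}}}$. Specializing to $\phi = \mathcal H'(u_{\varepsilon,h}(t))\psi^2$, dividing by $h$, integrating against $\chi(t)\,dt$ over $J=(s_0,t_0)$, and combining with the chain rule in $t$, $\frac{d}{dt}\mathcal H(u_{\varepsilon,h}) = \mathcal H'(u_{\varepsilon,h})\cdot\frac{d u_h}{dt}$, applied to the fundamental-theorem-of-calculus expansion of $t \mapsto \int\mathcal H(u_{\varepsilon,h}(t))\psi^2\chi(t)\,d\mu$, I obtain the Steklov-level analog of the target inequality, with the $\ge$ direction.

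Finally, I pass $h \to 0$. By \cite[Theorem 9]{DieUhl77}, $u_h(t) \to u(t)$ in $\F$ at a.e.\ $t$, and by axiom (vi) applied to the modified $\mathcal H$ this upgrades to $\mathcal H'(u_{\varepsilon,h}(t))\psi^2 \to \mathcal H'(u_\varepsilon(t))\psi^2$ in $\F$ at a.e.\ $t$. The right-continuity of $\e_t$ (axiom (viii)) together with \cite[Theorem 9]{DieUhl77} and dominated convergence carry the limit inside the time integrals in exactly the way done in the subsolution lemma. I expect the main obstacle to be the verification that $\mathcal H'(u_\varepsilon)\psi^2 \in \F_{\mbox{\tiny{c}}}$: because $\mathcal H'$ is not globally $\mathcal C^1$ on $\R$ and typically blows up at $0$ for $p<1$, the truncation/mollification step must use the local boundedness of $u$ (which keeps $u_\varepsilon$ bounded away from $0$ and above) together with the algebra and chain-rule structure of $\F_{\mbox{\tiny{b}}}$ to bring the test function under the scope of axiom (vi).
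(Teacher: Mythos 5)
Your proposal follows the paper's own route: forward Steklov averaging, testing the supersolution inequality against the non-negative function $\mathcal H'(u_{\varepsilon,h})\psi^2$, the chain rule in $t$ for $t\mapsto\int\mathcal H(u_{\varepsilon,h})\psi^2\chi\,d\mu$, and passage to the limit $h\to 0$ via \cite[Theorem 9]{DieUhl77} together with the right-continuity and right-linearity of $\e_t$. Your explicit discussion of modifying $\mathcal H'$ (using $u_{\varepsilon}\ge\kappa+\varepsilon>0$ and local boundedness) so that the $\mathcal C^1$-composition and algebra properties of $\F$ put $\mathcal H'(u_{\varepsilon,h})\psi^2$ into $\F_{\mbox{\tiny{c}}}$ is a legitimate filling-in of a point the paper leaves implicit.

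There is, however, one concrete gap: the lemma is stated for both cylinders $Q^{\pm}$, and your argument as written only covers the $-$ case. The forward average $u_h(t)=\frac{1}{h}\int_t^{t+h}u(s)\,ds$ is only available when there is room to the right of the time interval: on $I^-_1=(a-a_1,b')$ one may take $0<h<b-b'$ since $u$ is defined up to time $b$. In the $+$ case, $I^+_1=(a',b+a_1)$ shares its right endpoint with $I^+=(a,b+a_1)$, so for $t$ near $t_0$ (which may be arbitrarily close to $b+a_1$) the forward average requires values of $u$ beyond its domain of definition, and that step fails. The paper resolves this by switching, in the $+$ case, to the lower (backward) Steklov average $(\bar u_{\varepsilon})_h(t)=\frac{1}{h}\int_{t-h}^{t}\bar u_{\varepsilon}(s)\,ds$ with $0<h<a'-a$, which stays inside $I^+$ because $t>a'$; the rest of the argument then transfers verbatim. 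This case distinction is not cosmetic: the subsequent Cacciopoli estimate for supersolutions with $p\in(0,1-\eta)$ runs time backwards on $Q^+$, so the $+$ version of the lemma is genuinely used. Adding the backward-average case would make your proof match the paper's.
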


\begin{proof}
First consider the case when $\pm$ is $-$.
For a real number $0 < h < b-b'$, let
\[  (\bar u_{\varepsilon})_h(t) := \frac{1}{h} \int_t^{t+h} \bar u_{\varepsilon}(s) ds,\]
be the (upper) Steklov average of $\bar u_{\varepsilon}$ at $t <b'$.

Since $u$ is a local weak supersolution,
\begin{align*}
& \quad  \int_X \mathcal H( (u_{\varepsilon})_h(t_0) )  \psi^2 d\mu - \int_X \mathcal H( (u_{\varepsilon})_h(s_0) )  \psi^2 d\mu\\
& =
\int_J \frac{d}{d t} \left( \int_X \mathcal H( (u_{\varepsilon})_h(t) ) \psi^2 \chi \, d\mu \right) dt 
\\
& = 
\int_J \frac{1}{h} \int_X \big( u(t+h) - u(t) \big) \mathcal H'((u_{\varepsilon})_h(t)) \psi^2 \chi \,  d\mu \, dt 
 + \int_J \int_X  \mathcal H((u_{\varepsilon})_h(t)) \psi^2 \chi' \, d\mu \, dt \\
& \ge 
- \int_J \frac{1}{h} \int_t^{t+h}\e_s( u(s), \mathcal H'((u_{\varepsilon})_h(t)) \psi^2 ) ds \, \chi(t) dt
+ \int_J \int_X  \mathcal H((u_{\varepsilon})_h(t)) \psi^2 \chi' \, d\mu \, dt.
\end{align*}
By \cite[Theorem 9]{DieUhl77}, $u_h(t)$ converges to $u(t)$ in $\F$ as $h \to 0$, at almost every $t$. Since the $\F$-norm dominates the $L^2$-norm, and since $(\bar u_{\varepsilon})_h = u_h + \kappa + \varepsilon$  it follows that
\begin{align*}
\int_X \mathcal H( (u_{\varepsilon})_h(t) )  \psi^2 d\mu 
\longrightarrow 
\int_X \mathcal H( u_{\varepsilon}(t) )  \psi^2 d\mu \quad \mbox{ at a.e. } t
\end{align*}
and
\begin{align*}
\int_J \int_X  \mathcal H((u_{\varepsilon})_h) \psi^2 \chi' \, d\mu \, dt 
\longrightarrow 
\int_J \int_X  \mathcal H(u_{\varepsilon}) \psi^2 \chi' \, d\mu \, dt
\end{align*}
as $h \to 0$ (passing to a subsequence if necessary).

It remains to show that $- \int_J \frac{1}{h} \int_t^{t+h}\e_s( u(s), \mathcal H'((u_{\varepsilon})_h(t)) \psi^2 ) \chi(t) ds \, dt$ converges to $-\int_J \e_t( u(t), \mathcal H'(u_{\varepsilon}(t)) \psi^2 )  \chi(t) dt$ as $h \to 0$. 
We have
\begin{align*}
\mathcal H'((u_{\varepsilon})_h(t)) \psi^2  \longrightarrow  \mathcal H'(u_{\varepsilon}(t)) \psi^2
\end{align*}
in $\F$ as $h \to 0$, at almost every $t$.
Hence, by the right-continuity of $\e_t$,
\begin{align*}
\left| \int_J \frac{1}{h} \int_t^{t+h}\e_s( u(s), [\mathcal H'((u_{\varepsilon})_h(t)) - \mathcal H'((u_{\varepsilon})(t))] \psi^2 ) ds \, \chi(t) dt \right| \longrightarrow 0,
\end{align*}
Applying \cite[Theorem 9]{DieUhl77} with $f(s) = \e_s( u(s), \mathcal H'(u_{\varepsilon}(t)) \psi^2 )$, we see that
\begin{align*}
\int_J \frac{1}{h} \int_t^{t+h} \left| \e_s( u(s), \mathcal H'(u_{\varepsilon}(t)) \psi^2 ) - \e_t(u(t), \mathcal H'(u_{\varepsilon}(t)) \psi^2 ) \right| ds \, \chi(t) dt \longrightarrow 0.
\end{align*}
Indeed, $f$ is integrable due to the right-continuity of $\e_t$.
Combining the above and using the right-linearity of $\e_t$ completes the proof in the case $p \in (-\infty,0)$.

In the case when $\pm$ is $+$, we use the (lower) Steklov average of $\bar u_{\varepsilon}$ at $t> a'$, defined as
\[  (\bar u_{\varepsilon})_h(t) := \frac{1}{h} \int_{t-h}^t \bar u_{\varepsilon}(s) ds,\]
where $0 < h < a'-a$. Then the proof is as in the previous case.
\end{proof}

\subsection{Estimates for local weak subsolutions} 

\begin{theorem}[Cacciopoli-type inequality for subsolutions] \label{thm:estimate subsol p>2}
Let $u$ be a local weak subsolution of the heat equation for $\e_t$ in $Q^-$.  Suppose H.1a holds for $u$. Then, for any $p \ge 2$, 
\begin{align} \label{eq:subsol p>2}
\begin{split}
& \quad \frac{1}{2} \sup_{t \in I^-_{\delta'}} \int \bar u^p \psi^2 d\mu + a \frac{p^2}{4} \int_{I^-_{\delta'}} \int \bar u^{p-2} \psi^2 d\Gamma(u) dt \\
& \le 
p^2 C_1 |||\bar u^{\frac{p}{2}} \psi |||_{I_{\delta}^- \times B_{\delta}}^2 
+ \left( p^{\beta+1} C_2 + 2C_3 \right) |\delta'-\delta|^{-k}  \int_{I_{\delta}^-} \int \bar u^p \psi \, d\mu \, dt + (p-1) \int \kappa^p \psi^2 d\mu,
\end{split}
\end{align}
provided that the right hand side is finite.

If, in addition, H.1b holds for $u$ and all $p \in (1+\eta,2)$, then \eqref{eq:subsol p>2} also holds for these values of $p$.
\end{theorem}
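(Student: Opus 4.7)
The plan is to combine the Steklov chain-rule lemma above with the structural inequality H.1a, localizing in time by a cutoff $\chi$. Pick $\chi\in C^{\infty}(\R)$ with $\chi\equiv 1$ on $I^-_{\delta'}$, $\chi\equiv 0$ outside $I^-_\delta$, and $|\chi'|\le 2C_3|\delta-\delta'|^{-k}$---this is exactly what \eqref{eq:C_3} affords---and $\int|\chi'|\,dt\le 4$. Fix $t_0\in I^-_{\delta'}$ and $s_0\in I^-$ with $\chi(s_0)=0$. Reading the preceding lemma with the implicit boundary weights $\chi(t_0)=1$, $\chi(s_0)=0$ yields
\[
\int_X \mathcal H_n(u(t_0))\psi^2\,d\mu \;\le\; -\int_{s_0}^{t_0}\e_t(u,\mathcal H_n'(u)\psi^2)\chi\,dt \;+\; \int_{s_0}^{t_0}\!\!\int_X \mathcal H_n(u)\psi^2\chi'\,d\mu\,dt.
\]
Applying H.1a to bound the first term on the right transfers the two gradient pieces $\tfrac{a}{2}\int\!\int\bar u_n^{p-2}\psi^2\,d\Gamma(u)\chi$ and $\tfrac{(p-2)a}{4}\int\!\int_{\{\bar u\le n\}}\bar u_n^{p-2}\psi^2\,d\Gamma(u)\chi$ to the left side, producing $pC_1\,|||\bar u\bar u_n^{(p-2)/2}\psi\chi^{1/2}|||^2$ and $p^{\beta}C_2|\delta-\delta'|^{-k}\!\int\!\int\bar u^2\bar u_n^{p-2}\psi\,d\mu\,dt$ on the right.

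Next let $n\to\infty$; the assumed finiteness of the target right-hand side justifies the dominated-convergence passage. The two gradient pieces combine as $\tfrac{a}{2}+\tfrac{(p-2)a}{4}=\tfrac{ap}{4}$, the mixed powers $\bar u\bar u_n^{(p-2)/2}$ and $\bar u^2\bar u_n^{p-2}$ converge to $\bar u^{p/2}$ and $\bar u^p$, and $\mathcal H_n(v)\to \tfrac{1}{p}\bar v^p-\bar v\kappa^{p-1}+\tfrac{p-1}{p}\kappa^p$. Multiplying the resulting inequality through by $p$ upgrades the gradient coefficient to $\tfrac{ap^2}{4}$ and the constants to $p^2C_1$ and $p^{\beta+1}C_2$. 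Conversion of the leftmost $\int(\bar u^p-p\bar u\kappa^{p-1}+(p-1)\kappa^p)(t_0)\psi^2\,d\mu$ into $\tfrac{1}{2}\int\bar u(t_0)^p\psi^2\,d\mu$ rests on the pointwise bound
\[
\bar v^p-p\bar v\kappa^{p-1}+(p-1)\kappa^p \;\ge\; \tfrac{1}{2}\bar v^p-(p-1)\kappa^p,
\]
a consequence of weighted Young's inequality applied to $p\bar v\kappa^{p-1}$ (one can check it becomes an equality at $\bar v=2\kappa$ when $p=2$); the residue $(p-1)\kappa^p$ becomes the final term in \eqref{eq:subsol p>2}. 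The companion upper bound $\bar v^p-p\bar v\kappa^{p-1}+(p-1)\kappa^p\le\bar v^p+(p-1)\kappa^p$, combined with $|\chi'|\le 2C_3|\delta-\delta'|^{-k}$, $\int|\chi'|\,dt\le 4$, and $\psi\le 1$, bounds the $\chi'$-integral by $2C_3|\delta-\delta'|^{-k}\!\int\!\int\bar u^p\psi\,d\mu\,dt$ plus a further $O((p-1))\int\kappa^p\psi^2\,d\mu$ piece absorbed into the existing $(p-1)\int\kappa^p\psi^2\,d\mu$ term. Taking $\sup_{t_0\in I^-_{\delta'}}$ on the left completes the case $p\ge 2$.

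For $p\in(1+\eta,2)$ under H.1b, the same scheme applies with $\mathcal H_n(u)$ replaced by $\mathcal H(u_{\varepsilon})$ (no $n$-truncation is needed). A parallel Steklov chain rule for subsolutions is obtained by repeating the proof of the preceding lemma, and one then applies H.1b---the sign factor $(1-p)/|1-p|=-1$ is precisely what is required because $p>1$. The gradient coefficient $\tfrac{|p-1|a}{4}$ becomes $\tfrac{ap^2}{4}$ after rescaling by the factor $p^2/|p-1|$, uniformly bounded by $4/\eta$ under $p>1+\eta$; the other constants adjust correspondingly, and one finishes by passing $\varepsilon\downarrow 0$. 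The main technical obstacles throughout are the bookkeeping of $p$-dependence in the constants to match the specific exponents $p^2$, $p^{\beta+1}$ in \eqref{eq:subsol p>2}, and justifying the $n\to\infty$ (resp.\ $\varepsilon\downarrow 0$) limits, for which the assumed finiteness of the target right-hand side is exactly what is needed.
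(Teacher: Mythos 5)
Your proposal follows essentially the same route as the paper's proof: Steklov chain rule for $\mathcal H_n$, application of H.1a with a time cutoff $\chi$ controlled through \eqref{eq:C_3}, passage $n\to\infty$, multiplication by $p$, Young's inequality to go from $\bar u^p-p\bar u\kappa^{p-1}+(p-1)\kappa^p$ to $\tfrac12\bar u^p-(p-1)\kappa^p$, and the supremum over $t_0\in I^-_{\delta'}$. The one place where your bookkeeping does not reproduce \eqref{eq:subsol p>2} exactly is the $\chi'$-term: bounding the limit of $p\mathcal H_n(u)$ by $\bar u^p+(p-1)\kappa^p$ and integrating against $|\chi'|$ leaves an extra piece of size up to $4(p-1)\int\kappa^p\psi^2\,d\mu$, which cannot be ``absorbed'' into the existing $(p-1)\int\kappa^p\psi^2\,d\mu$ term without enlarging its constant, so as written you only obtain \eqref{eq:subsol p>2} with a worse constant on the last term. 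The paper's fix is one line: since $\kappa\le\bar u$, one has $-p\bar u\kappa^{p-1}+(p-1)\kappa^p\le-\kappa^p\le 0$, hence $\lim_n p\mathcal H_n(u)\le\bar u^p$, and then (using $\psi^2\le\psi$) the $\chi'$-integral is bounded by exactly $2C_3|\delta-\delta'|^{-k}\int\int\bar u^p\psi\,d\mu\,dt$ with no additional $\kappa^p$ contribution; with this correction your argument gives the stated inequality verbatim. For $p\in(1+\eta,2)$, your rescaling by $p^2/|p-1|$ yields the estimate with constants inflated by a factor of order $1/\eta$ rather than literally $p^2C_1$ and $p^{\beta+1}C_2$; the paper gives no details for this case either, and such $\eta$-dependent constants are harmless for the subsequent Moser iteration, but you should state explicitly that this is what ``the constants adjust correspondingly'' means.
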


\begin{proof}
Let $\chi = \chi_{\delta',\delta}$ be a smooth function of the time variable $t$ such that $0 \leq \chi \leq 1$,  $\chi = 0 \textrm{ in } (-\infty, a - a_{\delta})$, $\chi = 1$ in $(a - a_{\delta'}, \infty)$ and $|\chi'| \le 2|a_{\delta}-a_{\delta'}|^{-1}$. 

From H.1a and \eqref{eq:steklov subsol estimate p>1}, we get
\begin{align*}
\begin{split}
&\quad \int_X \mathcal H_n( u(t_0) )  \psi^2 d\mu + \frac{a}{2} \int_{I_{\delta'}^-} \int \bar u_n^{p-2} \psi^2 d\Gamma(u) dt 
+ \frac{p-2}{4} a \int_{I_{\delta'}^-} \int_{\{u\le n\}} \bar u_n^{p-2} \psi^2 d\Gamma(u) \\
& \le
p C_1 |||\bar u \bar u_n^{\frac{p-2}{2}} \psi |||_{I_{\delta}^- \times B_{\delta}}^2 
 + p^{\beta} C_2 |\delta'-\delta|^{-k} \int_{I_{\delta}^-} \int \bar u^2 \bar u_n^{p-2} \psi \, d\mu \, dt 
+ \int_{I_{\delta}^-} \int_X  \mathcal H_n(u) \psi^2 \chi' \, d\mu \, dt.
 \end{split}
\end{align*}
We multiply each side by $p$, let $n \to \infty$, and take the supremum over $t_0 \in I^-_{\delta'}$. 
By Young's inequality,
\begin{align*}
\lim_{n \to \infty} p \H(u_n) 
= \bar u^p - p \bar u \kappa^{p-1} + (p-1) \kappa^p
\ge \frac{1}{2} \bar u^p - (p-1) \kappa^p.  
\end{align*}
Hence,
\begin{align*}
&\quad \int_X \left( \frac{1}{2} \bar u(t_0)^p -  (p-1) \kappa^p \right) \psi^2 d\mu 
+ a \frac{p^2}{4} \int_{I_{\delta'}^-} \int_X \bar u^{p-2} \psi^2 d\Gamma(u) \\
& \le
p^2 C_1 |||\bar u^{\frac{p}{2}} \psi |||_{I_{\delta}^- \times B_{\delta}}^2  +  \int_{I_{\delta}^-} \int \left( p^{\beta+1} C_2 |\delta'-\delta|^{-k} + \chi' \right) \bar u^p  \psi \, d\mu \, dt,
\end{align*}
where we have used that $\kappa \le \bar u$. Finally, apply \eqref{eq:C_3} to estimate $|\chi'|$.
\end{proof}

\subsection{Local boundedness and $L^{p,\infty}$ mean value estimates for $p\ge2$}

Define $B_{\delta}$, $I^-$, $I^+$, $I^-_{\delta}$, $I^+_{\delta}$, $Q^-_{\delta}$, $Q^+_{\delta}$ as in Section \ref{ssec:structural hypotheses}. 

\begin{lemma} \label{lem:sup_t estimate}
 Let $u$ be a local weak subsolution to the heat equation for $\e_t$ in $Q^-$. Suppose \eqref{eq:wSI} holds for $\bar u(t)$ uniformly for all $t \in I^-$. Then 
\begin{align*}
 \sup_{t \in I_{\delta'}^-} \| \bar u \psi \|_{2}^2  
+  \int_{I_{\delta'}^-} \int \psi^2 d\Gamma(u) dt  \le 
C \int_{I_{\delta}^-}  \int_{B_{\delta}}  \bar u^2 d\mu  \, dt ,
\end{align*}
where $\psi = \psi_{\delta',\delta}$. The constant $C \in (0,\infty)$ depends only on $\beta$, $\gamma$, $a$, $C_1$, and upper bounds for $(C_2 + C_3 + C_{\mbox{\tiny{SI}}}) |\delta-\delta'|^{-k}$, $\frac{C_{\mbox{\tiny{SI0}}}}{C_{\mbox{\tiny{SI}}}}$, and $|I_{\delta}^-|$.
\end{lemma}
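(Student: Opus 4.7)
Plan. The plan is to combine Theorem \ref{thm:estimate subsol p>2} at $p=2$ with the Sobolev--Lorentz interpolation underlying Lemma \ref{lem:|||f|||}, and then to close the resulting self-improvement inequality by a Giaquinta--Giusti-type iteration.

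First, I would apply Theorem \ref{thm:estimate subsol p>2} with $p=2$ and cutoff $\psi=\psi_{s,S}$ for an arbitrary pair $\delta'\le s<S\le\delta$ drawn from the nested family, obtaining
\[
\frac{1}{2}\sup_{t\in I_s^-}\|\bar u\psi\|_2^2 + a\int_{I_s^-}\int\psi^2\,d\Gamma(u)\,dt \le 4C_1\,|||\bar u\psi|||^2_{I_S^-\times B_S} + c(S-s)^{-k}\int_{I_S^-}\int_{B_S}\bar u^2\,d\mu\,dt + \kappa^2\mu(B_S),
\]
with $c=2^{\beta+1}C_2+2C_3$. Then I would bound $|||\bar u\psi|||^2$ by mimicking the proof of Lemma \ref{lem:|||f|||}: apply \eqref{eq:wSI} to $\bar u$ with cutoff $\psi$ and $p=2$, integrate over $t\in I_S^-$, invoke Lemma \ref{lem:lorentz norm of u} with $w=\bar u\psi$, and split the resulting $A^{\nu/(2r)}B^{1-\nu/(2r)}$ product via Young's inequality $A^\theta B^{1-\theta}\le A+B$. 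Using the pointwise inequality $d\Gamma(\bar u)\le d\Gamma(u)$, this produces a bound whose right-hand side contains the bootstrap term $\sup_{t\in I_S^-}\|\bar u\psi\|_2^2$ with coefficient proportional to $|I_S^-|^\gamma$.

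Next, I would introduce the non-decreasing quantity
\[
\phi(s):=\sup_{t\in I_s^-}\|\bar u\|_{L^2(B_s)}^2 + \int_{I_s^-}\int_{B_s}d\Gamma(u)\,dt.
\]
Using the sandwiching $\mathbf{1}_{B_s}\le\psi_{s,S}^2\le\mathbf{1}_{B_S}$ together with the corresponding $\|\bar u\|_{L^2(B_s)}\le\|\bar u\psi_{s,S}\|_2\le\|\bar u\|_{L^2(B_S)}$, the combined estimate of the previous paragraph recasts as a self-improvement inequality
\[
\phi(s)\le K_1\phi(S) + K_2(S-s)^{-k}\phi(S) + K_3(S-s)^{-k}E + K_0, \qquad \delta'\le s<S\le\delta,
\]
where $E=\int_{I_\delta^-}\int_{B_\delta}\bar u^2$, $K_0=\kappa^2\mu(B_\delta)$, $K_1$ is of order $|I_\delta^-|^\gamma$, and $K_2,K_3$ depend only on the quantities listed in the lemma's statement.

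To close the bootstrap I would iterate. When $K_1<1$ (i.e.\ $|I_\delta^-|^\gamma$ is sufficiently small), the Giaquinta--Giusti iteration lemma applied to $\phi$ on a sequence $s_0=\delta'<s_1<\cdots\to\delta$ yields $\phi(\delta)\le CE$, and hence the LHS of the lemma (which is dominated by $\phi(\delta)$ via $\psi_{\delta',\delta}^2\le\mathbf{1}_{B_\delta}$ and $I_{\delta'}^-\subset I_\delta^-$) is bounded by $CE$. For general $|I_\delta^-|$, I would first partition $I_\delta^-$ into $N\lesssim|I_\delta^-|/\ell$ sub-intervals of length $\le\ell$, with $\ell$ chosen so that the analogous $K_1(\ell)<1$ on each piece, apply the iteration on each sub-interval, and take the maximum; the number $N$ controls the dependence of $C$ on the upper bound for $|I_\delta^-|$. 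The main obstacle is precisely this bootstrap closure: the interpolation step unavoidably reintroduces a $\sup_t$ term whose coefficient $\sim|I|^\gamma$ is not a priori small, so the time-partition device is required to make the iteration converge.
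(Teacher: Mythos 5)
Your first two steps (the $p=2$ Cacciopoli estimate and the Sobolev--Lorentz interpolation behind Lemma \ref{lem:|||f|||}) are the right ingredients, but the closure fails at the step you yourself flag as the main obstacle. In your self-improvement inequality the spatial energy enters the right-hand side as $K_2(S-s)^{-k}\phi(S)$, because the interpolation puts the factor $C_{\mbox{\tiny{SI}}}|S-s|^{-k}$ in front of $\int\int\psi_{s,S}^2\,d\Gamma(u)$. The Giaquinta--Giusti iteration lemma requires the coefficient of $\phi(S)$ to be a \emph{fixed} $\theta<1$, independent of the gap; along any sequence $s_0=\delta'<s_1<\cdots\to\delta$ the gaps shrink, so $(s_{i+1}-s_i)^{-k}$ blows up geometrically and the products of coefficients diverge, no matter how small $K_2$ is. Shortening the time interval (your condition ``$K_1(\ell)<1$'') controls only the $\sup_t$ term and the size of $K_2$, not the factor $(S-s)^{-k}$, so the spatial iteration as described does not converge. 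A secondary issue: once you partition $I^-_\delta$ in time you can no longer quote Theorem \ref{thm:estimate subsol p>2} on the later sub-intervals (its time cutoff $\chi$ is anchored at $a-a_\delta$, i.e.\ at the left end of the whole interval), and ``take the maximum'' ignores that each slice produces an initial-data term at its left endpoint which must be propagated sequentially into the next slice. Also note that Theorem \ref{thm:estimate subsol p>2} is only asserted ``provided the right-hand side is finite'', and finiteness of $|||\bar u\psi|||$ is not available at this stage -- part of the point of this lemma and Lemma \ref{lem:gain of integrability} is to supply it -- so invoking the theorem as a black box on the full interval is circular.

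The paper's proof avoids the spatial iteration entirely: it keeps the single cutoff $\psi_{\delta',\delta}$, reruns the $p=2$ energy estimate (with the time cutoff $\chi$) on consecutive time slices $(s_n,s_{n+1})$ of length $L$ chosen so that $L^{\gamma}\sim a|\delta-\delta'|^{k}/(C_1C_{\mbox{\tiny{SI}}})$, and then the interpolation on each slice produces the gradient term and the $\sup_t$ term with coefficients at most of order $a/6$, which are absorbed directly into the left-hand side \emph{at the same cutoff level} -- no mismatch of spatial parameters, hence no $(S-s)^{-k}$ attached to the absorbed quantity. The slices are then chained: the supremum over $(s_n,s_{n+1})$ bounds the initial term $X(s_{n+1})$ for the next slice, starting from $X(s_0)=0$ because $\chi$ vanishes near $a-a_\delta$; this sequential chaining is what produces the factor $2^{1+|I^-_\delta|/L}$ and explains the stated dependence of $C$ on upper bounds for $|I^-_\delta|$ and $C_{\mbox{\tiny{SI}}}|\delta-\delta'|^{-k}$. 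If you replace your Giaquinta--Giusti closure by this fixed-cutoff, short-time absorption plus sequential chaining, your argument becomes essentially the paper's.
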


\begin{proof}
Our proof follows \cite[Section 3]{AS67}.
Let $\chi:\R \to [0,1]$ be a smooth function with $\chi = 0$ on $(-\infty,a-a_{\delta})$ and $\chi =1$ on $(a-a_{\delta'},\infty)$. Due to \eqref{eq:C_3}, we may assume that $|\chi'| \le 2C_3 |\delta-\delta'|^{-k}$. We choose $s_0 \in (a-a_1,a-a_{\delta})$ and let $s_n = s_0 + nL$ for some $L>0$ given below.  
Let $J = (s_n,s_{n+1})$, assuming that $s_{n+1} \in \cap I_{\delta}^-$.

Let $u^+ = \max(u,0)$ and
\[ X(t) = \frac{1}{2} \int u^+(t)^2 \psi^2 \chi(t) \, d\mu. \]
As in the proof of Theorem \ref{thm:estimate subsol p>2} with $p=2$, we have for almost every $t \in J$,
\begin{align*}
& \quad X(t) - X(s_n) + a \int_J \int \psi^2 d\Gamma(u) \chi \, dt  \\
&  \le 
4 C_1 |||\bar u \psi \chi^{1/2} |||_{J \times B_{\delta}}^2 
+  \left( 2^{\beta+1}C_2 + 2C_3 \right)  |\delta'-\delta|^{-k} \int_J \int  \bar u^2  \psi \, \chi \, d\mu \, dt + \int \kappa^2 \psi^2 d\mu.
\end{align*}
Repeating the proof of Lemma \ref{lem:|||f|||} with $\sigma=1$ and $f = \bar u\psi \chi^{1/2}$,
\begin{align*}
& \quad 
|||\bar u \psi \chi^{1/2} |||_{J \times B_{\delta}}^2 \\
& 
\le 
2 L^{\gamma} \left(  \frac{C_{\mbox{\tiny{SI}}}}{|\delta-\delta'|^k} \int_J\int_{B_{\delta}}  \psi^2 d\Gamma(u) \chi dt +  \frac{C_{\mbox{\tiny{SI0}}}}{|\delta-\delta'|^k} \int_J \int_{B_{\delta}} \bar u^2 d\mu \, \chi dt +  \sup_{t \in J} \| \bar u \psi \|_2^2 \chi \right).
\end{align*}
 
Now we choose
\[  L := \left( \frac{a |\delta-\delta'|^k}{48 C_1 \cdot  C_{\mbox{\tiny{SI}}}} \right)^{\frac{1}{\gamma}}. \]
Then, for almost every $t \in J$,
\begin{align} \label{eq:X(t_0)}
\begin{split}
& \quad X(t)
+  \frac{a}{2} \int_J \int  \psi^2 d\Gamma(u) \chi dt \\
& \le
X(s_n) +  \frac{a}{4} \sup_{t \in J} \|  u^+ \psi \|_2^2 \chi 
+ \left( \frac{a C_{\mbox{\tiny{SI0}}}}{4 C_{\mbox{\tiny{SI}}}} + \left( 2^{\beta+1}C_2 + 2C_3 \right)  |\delta'-\delta|^{-k} \right)   \int_J  \int_{B_{\delta}}  \bar u^2 d\mu \, \chi dt \\
& \quad + 2 \int \kappa^2 \psi^2 d\mu.
\end{split}
\end{align}
Disregarding the non-negative integral on the left hand side of \eqref{eq:X(t_0)}, rearranging, and taking supremum over all $t \in J$, 
\begin{align*}
\frac{1}{4} \sup_{t \in J} \| u^+ \psi \|_{2}^2  \chi 
\le X(s_n) +
\left( \frac{a C_{\mbox{\tiny{SI0}}}}{4 C_{\mbox{\tiny{SI}}}} +  \frac{2^{\beta+1}C_2 + 2C_3 }{  |\delta'-\delta|^k } + 2 \frac{1_{\{\kappa>0\}}}{L} \right)  \int_J \int_{B_{\delta}}  \bar u^2 d\mu  \, \chi dt.
\end{align*}
Iterating over the time-intervals $(s_n,s_{n+1})$, we obtain
\begin{align} \label{eq:sup_t <}
\sup_{t \in {I_{\delta}^-}} \| u^+ \psi \|_2^2 \chi  
 \le
2^{1+|{I_{\delta}^-}|/L} K + X(s_0),
\end{align}
where
\[  K := \left( \frac{a C_{\mbox{\tiny{SI0}}}}{4 C_{\mbox{\tiny{SI}}}} +  \frac{2^{\beta+1}C_2 + 2C_3 }{  |\delta'-\delta|^k } + 2 \frac{1_{\{\kappa>0\}}}{L} \right)  \int_{I_{\delta}^-} \int_{B_{\delta}}  \bar u^2 d\mu  \, \chi dt. \]
By the choice of $\chi$ and $s_0$, we have $X(s_0)=0$.

Putting \eqref{eq:sup_t <} into \eqref{eq:X(t_0)}, using $X(t) \ge 0$, and summing over all $(s_n,s_{n+1})$, we get
\begin{align*} 
\frac{a}{2} \int_{I_{\delta}^-} \int \psi^2 d\Gamma(u) \chi \, dt 
 \le 
 K + \left(\frac{a}{4} + 1 \right) (2^{1+|{I_{\delta}^-}|/L} K ).
\end{align*}
Simplifying,
\begin{align*} 
\int_{I_{\delta'}^-} \int \psi^2 d\Gamma(u) dt 
 \le 
 \frac{2}{a} (2^{2+|{I_{\delta}^-}|/L} K ).
\end{align*}
\end{proof}

\begin{lemma}[Gain of integrability] \label{lem:gain of integrability} 
Let $u$ be a local weak subsolution of the heat equation for $\e_t$ in $Q^-$. Suppose \eqref{eq:wSI} holds for $\bar u(t)$ uniformly for all $t \in I^-$. Then
\begin{align*}
|||\bar u^{\sigma}|||_{I_{\delta'}^- \times B_{\delta'}} < \infty 
\end{align*}
for all $\sigma >1$ and all $\delta' \in [\delta^*,1)$. 
\end{lemma}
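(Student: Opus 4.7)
The plan is to feed the output of Lemma \ref{lem:sup_t estimate} into Lemma \ref{lem:|||f|||} with $f=\bar u$. Lemma \ref{lem:|||f|||} converts $\bar u \in L^2(I \to \F(B_1)) \cap L^\infty(I \to L^2(B_1))$ plus the weighted Sobolev inequality into control of $|||\bar u^\sigma|||$, while Lemma \ref{lem:sup_t estimate} supplies exactly the required $L^\infty_t L^2_x$ bound on $\bar u$ together with the energy bound on $\int\int \psi^2 d\Gamma(u)\,dt$. Both ingredients are already valid in this setting because \eqref{eq:wSI} is assumed for $\bar u(t)$ uniformly in $t \in I^-$.

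Concretely, fix $\sigma > 1$ and $\delta' \in [\delta^*, 1)$, and interpose an intermediate scale $\delta'' \in (\delta', 1)$. Since $u$ is a local weak subsolution, $u \in L^2_{\mbox{\tiny{loc}}}(I^- \to \F; B_1)$, so $\bar u = \max(u,0) + \kappa \in L^2(J \times A)$ for every subinterval $J$ relatively compact in $I^-$ and every open $A$ relatively compact in $B_1$. In particular the quantity $\int_{I_1^-}\int_{B_1} \bar u^2 \, d\mu\,dt$ (interpreted on any fixed compactly contained subset used below) is finite.

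Now apply Lemma \ref{lem:sup_t estimate} at the scales $(\delta'', 1)$ to obtain
\begin{align*}
\sup_{t \in I_{\delta''}^-} \| \bar u \, \psi_{\delta'',1} \|_{2}^2 + \int_{I_{\delta''}^-} \int \psi_{\delta'',1}^2 \, d\Gamma(u) \, dt < \infty.
\end{align*}
Since $\psi_{\delta'',1} \equiv 1$ on $B_{\delta''}$, this also controls $\sup_t \| \bar u \|_{L^2(B_{\delta''})}^2$ and $\int_{I_{\delta''}^-}\int_{B_{\delta''}} d\Gamma(u) \, dt$. We then invoke Lemma \ref{lem:|||f|||} with $f = \bar u$, interval $I = I_{\delta''}^-$, and the pair $(\delta', \delta'')$: the weighted Sobolev inequality for $\bar u(t)$ is given, and each of the three terms on the right-hand side — the $d\Gamma(\bar u)$-energy, the $L^2$-norm of $\bar u$ on $I_{\delta''}^- \times B_{\delta''}$, and the time-supremum of $\|\bar u\|_{L^2(B_{\delta''})}^2$ — is finite by the previous step. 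This yields $|||\bar u^\sigma|||_{I_{\delta''}^- \times B_{\delta'}} < \infty$, and since $I_{\delta'}^- \subset I_{\delta''}^-$ the conclusion follows.

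The only mildly delicate point is keeping track of the Radon measure $d\Gamma$: Lemma \ref{lem:sup_t estimate} produces a bound on $\int\int \psi^2 \, d\Gamma(u)\,dt$, whereas Lemma \ref{lem:|||f|||} invokes $d\Gamma(\bar u)$ through \eqref{eq:wSI}. In the paper's abstract framework these refer to the same positive Radon measure appearing in both H.1a and \eqref{eq:wSI} (and are unaffected by the additive constant $\kappa$), so no further reconciliation is required; the hypothesis that \eqref{eq:wSI} holds for $\bar u(t)$ is precisely what licenses the substitution.
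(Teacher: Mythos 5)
Your proof is correct and follows essentially the same route as the paper: apply Lemma \ref{lem:sup_t estimate} at an intermediate scale to get the $L^\infty_t L^2_x$ bound and the energy bound $\int\int \psi^2\, d\Gamma(u)\,dt < \infty$, then feed these into Lemma \ref{lem:|||f|||} with $f = \bar u$ to conclude $|||\bar u^\sigma||| < \infty$ on the smaller cylinder. Your explicit handling of the intermediate scale $\delta''$ and the identification of the measure $d\Gamma$ only makes explicit what the paper leaves implicit.
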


\begin{proof}
Due to Lemma \ref{lem:sup_t estimate}, $\bar u$ is in $L^{\infty}(I_{\delta}^- \to L^2(B_{\delta}))$. Therefore we can apply Lemma \ref{lem:|||f|||} to get
\begin{align} \label{eq:HoelderSobolev integrated}
\begin{split}
& \quad |||\bar u^{\sigma}|||_{I_{\delta}^- \times B_{\delta'}}^{\frac{2}{\sigma}} \\
& \le  
2 |I_{\delta}^-|^{\gamma} \left(  \frac{C_{\mbox{\tiny{SI}}}}{|\delta'-\delta|^k}\int_{I_{\delta}^-} \int_{B_{\delta}}  \psi^2 d\Gamma(u) dt +  \frac{C_{\mbox{\tiny{SI0}}}}{|\delta'-\delta|^k} \int_{I_{\delta}^-} \int \bar u^2 d\mu \, dt + \sup_{t \in I_{\delta}^-} \| \bar u \|_{L^2(B_{\delta})}^2 \right),
\end{split}
\end{align}
where $\psi = \psi_{\delta',\delta}$.
The right hand side is finite by Lemma \ref{lem:sup_t estimate}.
\end{proof}

\begin{theorem}[Mean value estimate for subsolutions] \label{thm:MVE subsol p>2}
 Let $u$ be a local weak subsolution of the heat equation for $\e_t$ in $Q^-$. Suppose H.1a holds for $u$ and the weighted Sobolev inequality \eqref{eq:wSI} holds for $\bar u(t)$ uniformly for all $t \in I^-$.
Let $p\ge 2$.
Then there exists a positive constant $C'=C'(\nu,\beta,k)$ such that, for all $\delta^* \le \delta' < \delta \le 1$, 
\begin{align} \label{eq:MVE subsol p>2}
 \sup_{Q^-_{\delta'}} \{ \bar u^p \} 
& \le  
\frac{C' A_0^{\frac{\nu+2}{2}}}{ |\delta-\delta'|^{k(\nu+2)}}
|||\bar u^{\frac{p}{2}}|||_{I_{\delta}^- \times B_{\delta}}^2.
\end{align}
where
\[ A_0 :=  \frac{ 4|I_{\delta}^-|^{\gamma+1} C_{\mbox{\tiny{SI}}} p^{\beta+1}}{a |\delta-\delta'|^{2k}} \left(\frac{C_1 + 1_{\{\kappa>0\}}}{|I_{\delta}^-|} + \frac{ a C_{\mbox{\tiny{SI0}}}}{ C_{\mbox{\tiny{SI}}}} + C_2 + C_3 \right). \]

If, in addition, H.1b holds for $p \in (1+\eta,2)$, then \eqref{eq:MVE subsol p>2} also holds for these values of $p$.
\end{theorem}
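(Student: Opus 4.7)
The plan is to run a parabolic Moser iteration, combining Theorem~\ref{thm:estimate subsol p>2} (Cacciopoli) with Lemma~\ref{lem:|||f|||} to produce a one-step reverse-Hölder type inequality, and then iterating over a geometric sequence of exponents and nested cylinders.

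For a single step, fix auxiliary radii $\delta' \le \delta_1 < \delta_2 \le \delta$ and apply Theorem~\ref{thm:estimate subsol p>2} on the pair $(\delta_1, \delta_2)$. This bounds both $\sup_{t\in I_{\delta_1}^-} \int \bar u^p \psi_{\delta_1,\delta_2}^2 \, d\mu$ and the energy integral $\tfrac{p^2}{4}\int_{I_{\delta_1}^-} \int \bar u^{p-2} \psi_{\delta_1,\delta_2}^2 \, d\Gamma(u) \, dt$ by $p^2 C_1 |||\bar u^{p/2}|||_{I_{\delta_2}^- \times B_{\delta_2}}^2$ plus lower-order bulk and $\kappa$ terms. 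Feeding these bounds into Lemma~\ref{lem:|||f|||} with $f = \bar u^{p/2}$ (and using the chain rule $d\Gamma(\bar u^{p/2}) = (p/2)^2 \bar u^{p-2} d\Gamma(u)$ to match H.1a), one obtains
\[
    |||\bar u^{\sigma p/2}|||_{I_{\delta_1}^- \times B_{\delta_1}}^{2/\sigma} \;\le\; \widetilde C(p,\delta_1,\delta_2) \; |||\bar u^{p/2}|||_{I_{\delta_2}^- \times B_{\delta_2}}^2,
\]
for any fixed $\sigma > 1$, where $\widetilde C$ grows polynomially in $p$ and like $|\delta_1 - \delta_2|^{-k}$.

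To iterate, fix $\sigma > 1$ (a value just below $\nu/(\nu-2)$, reflecting the parabolic Sobolev gain packaged in the $|||\cdot|||$ norm), set $p_j = p \sigma^j$, and pick a shrinking sequence $\delta_j = \delta' + (\delta - \delta') 2^{-j}$ with matching time intervals $I_{\delta_j}^-$. Iterating the one-step inequality produces
\[
    |||\bar u^{p \sigma^j / 2}|||_{I_{\delta_j}^- \times B_{\delta_j}}^{2/\sigma^j}
    \;\le\; \prod_{i=0}^{j-1} \widetilde C(p_i, \delta_{i+1}, \delta_i)^{1/\sigma^i} \cdot |||\bar u^{p/2}|||_{I_\delta^- \times B_\delta}^2.
\]
The infinite product converges because $\sum_i i / \sigma^i$ and $\sum_i \log p_i / \sigma^i$ are finite, and careful bookkeeping of the $p^{\beta+1}$ and $|\delta_i - \delta_{i+1}|^{-k} = (2/(\delta-\delta'))^{k(i+1)}$ factors collapses them into the claimed constant $C' A_0^{(\nu+2)/2} |\delta - \delta'|^{-k(\nu+2)}$. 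The exponent $(\nu+2)/2$ is the effective parabolic dimension arising from $\sum_{i\ge 0} \sigma^{-i}$ (equivalently $\frac{\sigma}{\sigma-1}$, which equals $\frac{\nu}{2}+1$ for $\sigma = \frac{\nu}{\nu-2}$). Finally, since $|||f|||^{2/p_j}_{Q}$ converges to $\|f\|_{L^\infty(Q)}^{2/p}$ as $p_j \to \infty$ (by interpolation of the $L^{2q'}(I \to L^{2r',2})$ norms included in $|||\cdot|||$), and since $Q_{\delta_j}^- \to Q_{\delta'}^-$, passing to the limit yields $\sup_{Q_{\delta'}^-} \bar u^p$ on the left-hand side.

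The main obstacle is not the conceptual structure of the iteration but the bookkeeping: matching the accumulated constant to exactly the form $A_0^{(\nu+2)/2}/|\delta-\delta'|^{k(\nu+2)}$, absorbing the $\kappa^p$ and bulk $\int \bar u^p \, d\mu$ terms at each step (they are dominated by $\bar u^p$ itself, hence fold into the $|||\bar u^{p/2}|||^2$ right-hand side), and carefully handling the transition at $p=2$ when extending to $p \in (1+\eta, 2)$ using H.1b (Theorem~\ref{thm:estimate subsol p>2} already furnishes the Cacciopoli bound for such $p$, so the same iteration applies once one uses it to start the chain one step earlier).
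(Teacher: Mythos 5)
Your strategy coincides with the paper's: combine the Cacciopoli inequality of Theorem \ref{thm:estimate subsol p>2} with Lemma \ref{lem:|||f|||} into a one-step estimate, iterate over exponents $p\sigma^{j}$ on shrinking cylinders, absorb the bulk and $\kappa$ terms by H\"older's inequality (using $\kappa\le\bar u$), and let the exponent tend to infinity to recover the supremum. Two points, however, do not work as written. The first is the iteration exponent: you take $\sigma$ just below $\nu/(\nu-2)$ and justify the final exponent by the identity $\frac{\sigma}{\sigma-1}=\frac{\nu}{2}+1$ at $\sigma=\frac{\nu}{\nu-2}$, but this identity is false (there $\frac{\sigma}{\sigma-1}=\frac{\nu}{2}$). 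The gain actually encoded in the $|||\cdot|||$-norm is the parabolic one, $\theta=\frac{\nu+2}{\nu}$, coming from interpolating $L^2_t L^{\frac{2\nu}{\nu-2},2}_x$ against $L^\infty_t L^2_x$ (Lemma \ref{lem:lorentz norm of u}); it is this choice, made in the paper, for which $\sum_{j\ge0}\theta^{-j}=\frac{\nu+2}{2}$ and the accumulated factors $\left(A_0|\delta-\delta'|^{-2k}\right)^{\sum_j\theta^{-j}}$ reproduce exactly the constant in \eqref{eq:MVE subsol p>2}. With your $\sigma$ the bookkeeping does not collapse to the stated constant, so the quantitative conclusion does not follow from your chain of inequalities; the repair is simply to take $\sigma=\frac{\nu+2}{\nu}$. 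Relatedly, no chain rule for $\Gamma$ is available (or needed) in this abstract setting: \eqref{eq:wSI} is already formulated with the weighted energy $\int f^{p-2}\psi^2\,d\Gamma(f)$, so one applies it directly to $f=\bar u$ at exponent $p\theta^i$ rather than to $f=\bar u^{p/2}$.

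Second, Theorem \ref{thm:estimate subsol p>2} is conditional (``provided that the right hand side is finite''), and you never verify this. For $p\ge2$ the finiteness of $|||\bar u^{p\theta^i/2}|||$ at every stage is what Lemma \ref{lem:sup_t estimate} and Lemma \ref{lem:gain of integrability} are for, and your proposal does not invoke them. For $p\in(1+\eta,2)$ your remedy of ``starting the chain one step earlier'' misses the actual obstruction: the Cacciopoli inequality for those $p$ rests on H.1b, which requires $u$ non-negative and locally bounded, and one must still check that the right-hand side of \eqref{eq:subsol p>2} is finite. The paper's route is to first complete the case $p\ge2$, deduce local boundedness (Theorem \ref{thm:u loc bounded}), and only then rerun the same iteration for $1+\eta<p<2$, using local boundedness in place of Lemma \ref{lem:gain of integrability} to justify finiteness. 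Without this ordering your treatment of the range $1+\eta<p<2$ is incomplete.
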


\begin{proof}
Let $\chi$ be a smooth function of the time variable $t$ such that $0 \leq \chi \leq 1$,  $\chi = 0 \textrm{ in } (-\infty, a - a_{\delta})$, $\chi = 1$ in $(a - a_{\delta'}, \infty)$ and $|\chi'| \le 2|a_{\delta}-a_{\delta'}|^{-1}$.

Set $\hat\delta_i = (\delta - \delta') 2^{-i-1}$ so that $\sum_{i=0}^{\infty} \hat\delta_i = \delta - \delta'$. Set also $\delta_0 = \delta$, $\delta_{i+1} = \delta_i - \hat\delta_i = \delta - \sum_{j=0}^i \hat\delta_j$.

Let $\theta =\frac{\nu+2}{\nu}$. 
Let $\psi_i = \psi_{\delta_i,\delta_{i+1}}$ be the cutoff function for $B_{\delta_{i+1}}$ in $B_{\delta_i}$ that is given by \eqref{eq:wSI}. 

As in the proof of Lemma \ref{lem:|||f|||} but with  $\bar u^{p \theta^{i}/2} \psi$ in place of $f$ and $\sigma = \theta$, and then applying Theorem \ref{thm:estimate subsol p>2}, we get
\begin{align*} 
& \|  \bar u^{p \theta^{i+1}/2} \psi_i \|_{L^{2q'}(I_{\delta_i}^- \to L^{(2r',2)}(B_{\delta_i}))}^{\frac{2}{\theta}}  \\
& \le 2 |I_{\delta_i}^-|^{\gamma} \left( \frac{C_{\mbox{\tiny{SI}}}}{|\delta_i - \delta_{i+1}|^k} \frac{(p\theta^i)^2}{4} \int_{I_{\delta_i}^-} \int_{B_{\delta_i}} \bar u^{p \theta^i-2}\psi^2 d\Gamma(u) dt 
 + \frac{C_{\mbox{\tiny{SI0}}}}{|\delta_i - \delta_{i+1}|^k} \int_{I_{\delta_i}^-} \int_{B_{\delta_i}}  \bar u^{p \theta^{i}} d\mu \, dt \right)^{\frac{\nu}{2r}} \\
& \quad \left( \sup_{t \in I_{\delta_i}^-} \| \bar u^{p \theta^{i}/2} \|_{L^2(B_{\delta_i})}^2 \right)^{1-\frac{\nu}{2r}} \\
& \le 
2|I_{\delta_i}^-|^{\gamma} \left(\frac{ C_{\mbox{\tiny{SI}}}}{a |\delta_i-\delta_{i+1}|^k} + 2 \right) \bigg[
(p\theta^i)^2 
C_1 |||\bar u^{p \theta^{i}/2}|||_{I_{\delta_i}^- \times B_{\delta_i}}^2 \\
& \quad  + \left(\frac{(p\theta^i)^{\beta+1} C_2 + 2C_3 }{ |\delta_i-\delta_{i+1}|^k} + \frac{a C_{\mbox{\tiny{SI0}}}}{C_{\mbox{\tiny{SI}}}} \right) \int_{I_{\delta_i}^-} \int \bar u^{p\theta^i} \psi_i d\mu\, dt 
+ (p\theta^i-1) \int \kappa^{p\theta^i} \psi_i^2 d\mu\bigg].
\end{align*}
By H\"older's inequality,
\begin{align} \label{eq:hoelder application to |||}
\begin{split}
\int_{I_{\delta_i}^-} \int \bar u^{p\theta^i}  d\mu \, dt
& \le 
\|1\|_{L^1(I_{\delta_i}^- \to L^{\infty}(B_{\delta_i}))} 
\|\bar u^{p\theta^i}\|_{L^{\infty}(I_{\delta_i}^- \to L^1(B_{\delta_i}))} \\
& \le 
|I_{\delta_i}^-| \cdot |||\bar u^{\frac{p\theta^i}{2}} |||_{I_{\delta_i}^- \times B_{\delta_i}}^2.
\end{split}
\end{align}
Similarly, by H\"older's inequality  and the fact that $\kappa \le \bar u$,
\begin{align*}
\kappa^{p\theta^i} \int \psi_i^2 d\mu \le |||\bar u^{\frac{p\theta^i}{2}} |||_{I_{\delta_i}^- \times B_{\delta_i}}^2.
\end{align*}
Combining the above estimates and using that $\psi_i = 1$ on $B_{\delta_{i+1}}$,
\begin{align*} 
& \quad
 |||  \bar u^{p \theta^{i+1}/2} \psi_i |||_{I_{\delta_{i+1}}^- \times B_{\delta_{i+1}}}^{\frac{2}{\theta}}  \\
&  \le 
 \frac{ 4|I_{\delta_i}^-|^{\gamma+1} C_{\mbox{\tiny{SI}}} p^{\beta+1}}{a |\delta-\delta'|^{2k}} C^i  \left(\frac{C_1 + 1_{\{\kappa>0\}}}{|I_{\delta_i}^-|} + \frac{ a C_{\mbox{\tiny{SI0}}}}{ C_{\mbox{\tiny{SI}}}} + C_2 + C_3 \right)
 |||\bar u^{p \theta^{i}/2}|||_{I_{\delta_i}^- \times B_{\delta_i}}^2
\end{align*}
where $C$ depends only on $\theta$, $\beta$ and $k$.
Iterating the above inequality,
\begin{align*} 
& \quad |||  \bar u^{p \theta^{i+1}} |||_{I_{\delta_{i+1}}^- \times B_{\delta_{i+1}}}^{2\theta^{-(i+1)}}   \le 
C^{ \sum j \theta^{-j} } (A_0 |\delta-\delta'|^{-2k})^{ \sum \theta^{-j} } |||u^{\frac{p}{2}}|||_{I_{\delta}^- \times B_{\delta}}^2 \\
\end{align*}
where the sums are over $j = 0,1,\ldots,i$.
 Letting $i$ tend to infinity, we obtain
\begin{align*}
 \sup_{Q^-_{\delta'}} \{ \bar u^p \} 
& \le  
C' (A_0 |\delta-\delta'|^{-2k})^{\frac{\nu+2}{2}} 
|||\bar u^{\frac{p}{2}}|||_{I_{\delta}^- \times B_{\delta}}^2.
\end{align*}
This proves \eqref{eq:MVE subsol p>2} in the case $p \ge 2$.
Now Theorem \ref{thm:u loc bounded} already follows. In the case $1+\eta < p < 2$, the assertion can be proved in the same way as above, except that we use Theorem \ref{thm:u loc bounded} instead of Lemma \ref{lem:gain of integrability} to verify that the right hand side of \eqref{eq:subsol p>2} is finite.
\end{proof}

\begin{theorem}[Local boundedness] \label{thm:u loc bounded}
Under the same hypotheses as in Theorem \ref{thm:MVE subsol p>2}, any non-negative local weak subsolution $u$ of the heat equation for $\e_t$ is locally bounded. Moreover, if $u$ is a local weak solution of the heat equation for $\e_t$ and the hypotheses in Theorem \ref{thm:MVE subsol p>2} hold for both $u$ and $-u$, then $u$ is locally bounded.
\end{theorem}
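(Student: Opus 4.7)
The plan is to obtain the theorem as an immediate consequence of Theorem \ref{thm:MVE subsol p>2} (applied with $p>2$) combined with Lemma \ref{lem:gain of integrability}. The key observation is that the mean value estimate \eqref{eq:MVE subsol p>2} already contains everything needed: it bounds $\sup_{Q^-_{\delta'}} \bar u^p$ by a multiple of $|||\bar u^{p/2}|||_{I_\delta^- \times B_\delta}^2$, and the latter is finite whenever $p/2 > 1$ by the gain-of-integrability lemma.

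More precisely, fix $\delta' \in [\delta^*, 1)$ and choose an auxiliary $\delta_1 \in (\delta', 1)$. Fix any $p > 2$, so that $\sigma := p/2 > 1$. Since $u$ is a local weak subsolution and the weighted Sobolev inequality holds for $\bar u(t)$ uniformly in $t \in I^-$, Lemma \ref{lem:gain of integrability} (applied with $\delta_1$ in the role of $\delta'$) yields
\[
|||\bar u^{p/2}|||_{I_{\delta_1}^- \times B_{\delta_1}}^2 < \infty.
\]
Plugging this into Theorem \ref{thm:MVE subsol p>2}, applied with the pair $(\delta', \delta_1)$, gives
\[
\sup_{Q^-_{\delta'}} \bar u^p \;\le\; \frac{C'A_0^{(\nu+2)/2}}{|\delta_1-\delta'|^{k(\nu+2)}}\,|||\bar u^{p/2}|||_{I_{\delta_1}^- \times B_{\delta_1}}^2 \;<\; \infty.
\]
Since $u \le \bar u$ pointwise and $\delta' \in [\delta^*, 1)$ was arbitrary (and the collections $(B_\delta)$, $(I_\delta^\pm)$ may be chosen so that the $Q^-_{\delta'}$ exhaust any relatively compact subcylinder of the domain of $u$), we conclude that $u$ is locally bounded.

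For the "moreover" clause, note that if $u$ is a local weak solution then $-u$ is a local weak subsolution. Under the stated assumption that the hypotheses of Theorem \ref{thm:MVE subsol p>2} hold for $-u$ as well, the preceding argument applied to $-u$ supplies an upper bound on $\max(-u, 0) + \kappa$ on each $Q^-_{\delta'}$, i.e.\ a lower bound on $u$. Combined with the upper bound obtained by applying the argument to $u$ itself, this yields local boundedness of $u$.

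There is no real obstacle here, since the analytic work has already been carried out in Theorem \ref{thm:MVE subsol p>2} and Lemma \ref{lem:gain of integrability}. The only technical care needed is to introduce the intermediate parameter $\delta_1 < 1$, rather than taking $\delta_1 = 1$, so that Lemma \ref{lem:gain of integrability} (whose hypothesis requires $\delta' < 1$) is applicable; and to insist on $p$ strictly greater than $2$, so that the exponent $\sigma = p/2$ in the gain-of-integrability lemma is strictly greater than $1$.
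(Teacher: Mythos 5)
Your proposal is correct and follows essentially the same route as the paper: the paper's proof of Theorem \ref{thm:u loc bounded} simply points back to the proof of Theorem \ref{thm:MVE subsol p>2}, where the mean value estimate for $p\ge 2$ combined with the finiteness of $|||\bar u^{p/2}|||$ from Lemma \ref{lem:gain of integrability} already yields local boundedness of $\max(u,0)$, and the solution case is handled by applying the same reasoning to $-u$. Your added care about choosing an intermediate $\delta_1<1$ and taking $p>2$ so that $\sigma=p/2>1$ is a reasonable explicit rendering of what the paper leaves implicit.
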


\begin{proof}
In the proof of Theorem \ref{thm:MVE subsol p>2}, we have shown that for any local weak subsolution $u$, $max(u,0)$ is locally bounded. If $u$ is a weak solution, then the same reasoning applies to $-u$.
\end{proof}

\subsection{Estimates for local weak supersolutions}

Let $\varepsilon \in(0,1)$ and recall that $\bar u_{\varepsilon} := u + \kappa +\varepsilon$.
\begin{lemma}[Cacciopoli-type inequality supersolutions] \label{lem:estimate supsol}
Let $u$ be a non-negative locally bounded local weak supersolution of the heat equation for $\e_t$ in $Q^{\pm}$.
Suppose H.1b holds for $u$. 
Then for any $p \in (-\infty,0) \cup (0,1-\eta)$,
\begin{align} \label{eq:supsol}
\begin{split}
& \quad \sup_{t \in I^{\pm}_{\delta'}} \int \bar u_{\varepsilon}^p \psi^2 d\mu + a \eta \frac{p^2}{4}\int_{I^{\pm}_{\delta'}} \int \bar u_{\varepsilon}^{p-2} \psi^2 d\Gamma(u) dt  \\
& \le 
(1+p^2) C_1 |||  \bar u_{\varepsilon}^{\frac{p}{2}}|||_{I_{\delta}^{\pm} \times B_{\delta}}^2 + ((1 +|p|^{\beta+1})C_2 + 2C_3) |\delta'-\delta|^{-k}\int_{I_{\delta}^{\pm}} \int  \bar u_{\varepsilon}^{p} \psi d\mu \, dt,
\end{split}
\end{align}
where $\psi = \psi_{\delta',\delta}$.
Here, the superscript $\pm$ is $+$ when $p\in (0,1-\eta)$ and $-$ when $p \in (-\infty,0)$.
\end{lemma}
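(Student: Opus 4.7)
The plan is to adapt the proof of Theorem \ref{thm:estimate subsol p>2}, with Lemma \ref{lem:steklov supsol} replacing the Steklov identity \eqref{eq:steklov subsol estimate p>1} and H.1b replacing H.1a. Since $p<1$, the sign factor $(1-p)/|1-p|$ in H.1b is $+1$ throughout.

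I fix a smooth time cutoff $\chi:\R\to[0,1]$ equal to $1$ on $I^\pm_{\delta'}$ and vanishing outside $I^\pm_\delta$, with $|\chi'|\le 2C_3|\delta-\delta'|^{-k}$ by \eqref{eq:C_3}. I choose $J=(s_0,t_0)$ so that $\chi$ vanishes at exactly one endpoint and equals $1$ at the other, $\tau_0\in I^\pm_{\delta'}$; the vanishing endpoint is $s_0$ in the $Q^-$ case and $t_0$ in the $Q^+$ case, reflecting the upper/lower Steklov-average conventions in the proof of Lemma \ref{lem:steklov supsol}. After the vanishing boundary term drops, that lemma yields
\begin{align*}
\sigma \int_X \mathcal H(u_\varepsilon(\tau_0))\psi^2\, d\mu \ge -\int_J \e_t(u,\mathcal H'(u_\varepsilon)\psi^2)\chi\,dt + \int_J\int_X \mathcal H(u_\varepsilon)\psi^2 \chi'\,d\mu\,dt,
\end{align*}
with $\sigma=+1$ for $Q^-$ and $\sigma=-1$ for $Q^+$. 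Rearranging H.1b gives
\begin{align*}
-\int_I \e_t(u,\mathcal H'(u_\varepsilon)\psi^2)\chi\,dt &\ge \tfrac{|p-1|}{4}a\int_I\int \bar u_\varepsilon^{p-2}\psi^2\, d\Gamma(u)\chi\,dt \\
&\quad - (1\vee|p|)C_1\,|||\bar u_\varepsilon^{p/2}\psi|||_{I\times B_1}^2 - (1\vee|p|^\beta)C_2|\delta'-\delta|^{-k}\int_I\int \bar u_\varepsilon^p\psi\,d\mu\,dt.
\end{align*}

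Substituting into the Steklov inequality and multiplying by $|p|>0$, I use $\sigma\cdot\operatorname{sign}(p)=-1$ in both cases so that $\sigma|p|\mathcal H(v)=-\bar v^p$; rearranging so that $\int \bar u_\varepsilon(\tau_0)^p\psi^2\, d\mu$ and the positive gradient term sit on the left gives
\begin{align*}
\int_X \bar u_\varepsilon(\tau_0)^p\psi^2\, d\mu + \tfrac{|p||p-1|}{4}a\int_I\int \bar u_\varepsilon^{p-2}\psi^2\, d\Gamma(u)\chi\,dt \le R,
\end{align*}
where $R$ collects the three terms $|p|(1\vee|p|)C_1|||\bar u_\varepsilon^{p/2}\psi|||^2$, $|p|(1\vee|p|^\beta)C_2|\delta'-\delta|^{-k}\int_I\int \bar u_\varepsilon^p\psi\,d\mu\,dt$, and $\int_J\int \bar u_\varepsilon^p\psi^2|\chi'|\,d\mu\,dt$. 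In both parameter ranges $|p||p-1|\ge \eta p^2$: for $p<0$, $|p|(1+|p|)\ge p^2\ge\eta p^2$; for $p\in(0,1-\eta)$, $p(1-p)\ge\eta p^2$ (since $p(1+\eta)<1$). The last term in $R$ is dominated by $2C_3|\delta-\delta'|^{-k}\int_{I^\pm_\delta}\int \bar u_\varepsilon^p\psi\,d\mu\,dt$ using $\psi^2\le\psi$ and the $\chi'$-bound, while $|p|(1\vee|p|)\le 1+p^2$ and $|p|(1\vee|p|^\beta)\le 1+|p|^{\beta+1}$ absorb the remaining constants into the form stated on the right of \eqref{eq:supsol}. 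Restricting the left-hand gradient integral from $\chi\,dt$ to $dt$ over $I^\pm_{\delta'}$ (where $\chi=1$) and taking $\sup_{\tau_0\in I^\pm_{\delta'}}$ on the left yields \eqref{eq:supsol}.

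The main technical obstacle is sign and orientation bookkeeping across the two parameter ranges: $\mathcal H$ has opposite signs on $(-\infty,0)$ and $(0,1-\eta)$; multiplication by a negative $p$ reverses inequalities; and Lemma \ref{lem:steklov supsol} uses upper/lower Steklov averages in the $Q^\mp$ cases. These choices must be coordinated so that multiplying the Steklov inequality by $|p|$ yields the same clean upper bound $\int \bar u_\varepsilon^p\le\cdots$ in both cases.
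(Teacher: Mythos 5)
Your proposal is correct and follows essentially the same route as the paper's proof: the one-sided time cutoff with $|\chi'|\le 2C_3|\delta-\delta'|^{-k}$ from \eqref{eq:C_3}, Lemma \ref{lem:steklov supsol} with the boundary term dropped at the endpoint where $\chi$ vanishes, H.1b with sign factor $+1$ (since $p<1$), multiplication by $|p|$, the absorption of the $\chi'$-term via $p\mathcal H(u_\varepsilon)=\bar u_\varepsilon^p$ and $\psi^2\le\psi$, and the elementary bounds $|p|\,|p-1|\ge\eta p^2$, $|p|(1\vee|p|)\le 1+p^2$, $|p|(1\vee|p|^\beta)\le 1+|p|^{\beta+1}$. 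Only a cosmetic remark: since $I^\pm_{\delta'}$ and $I^\pm_\delta$ share one endpoint, the cutoff $\chi$ should (as in the paper) transition only on the gap of length $|a_\delta-a_{\delta'}|$ at the non-shared end rather than ``vanish outside $I^\pm_\delta$'' at both ends, which is what your choice of $J$ effectively implements anyway.
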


\begin{proof}
In the case $p \in (-\infty,0)$, we let $\chi:\R \to [0,1]$ be a smooth function with $\chi = 0$ on $(-\infty,a-a_{\delta})$, $\chi=1$ on $(a-a_{\delta'},\infty)$, and $|\chi'| \le 2|a_{\delta} - a_{\delta'}|^{-1}$. 
Let $s_0 \in (a - a_1,a-a_{\delta})$, $t_0 \in I_{\delta'}^-$ and set $J = (s_0, t_0)$. 
By Lemma \ref{lem:steklov supsol},
\begin{align*}
& \quad \int_X \mathcal H( u_{\varepsilon}(t_0) )  \psi^2 d\mu \\
& \ge
- \int_J \e_t( u(t), \mathcal H'( u_{\varepsilon}(t)) \psi^2 ) \chi(t) \, dt
+ \int_J \int_X  \mathcal H( u_{\varepsilon}(t)) \psi^2 \chi' \, d\mu \, dt.
\end{align*}

In the case $p \in (0,1-\eta)$, we let $\chi:\R \to [0,1]$ be a smooth function with  $\chi = 0$ in $(b + a_{\delta}, \infty)$, $\chi = 1$ in $(-\infty, b + a_{\delta'})$ and $|\chi'| \le 2|a_{\delta} - a_{\delta'}|^{-1}$.
Let $s_0 \in (b + a_{\delta},b+a_1)$, $t_0 \in I_{\delta}^+$ and set $J = (t_0, s_0)$. 
By Lemma \ref{lem:steklov supsol}
\begin{align*}
& \quad \int_X \mathcal H( u_{\varepsilon}(t_0) )  \psi^2 d\mu \\
& \le
 \int_J \e_t( u(t), \mathcal H'( u_{\varepsilon}(t)) \psi^2 ) \chi(t) \, dt
- \int_J \int_X  \mathcal H( u_{\varepsilon}(t)) \psi^2 \chi' \, d\mu \, dt.
\end{align*}

In either case, we get from the above inequalities and H.1b that
\begin{align*}
\begin{split}
&\quad p \int_X \mathcal H( u_{\varepsilon}(t_0) )  \psi^2 d\mu + \frac{|p| |p-1|}{4} a\int_{I^-_{\delta'}} \int \bar  u_{\varepsilon}^{p-2} \psi^2 d\Gamma(u) \chi dt \\
& \le
|p| \int_J \bigg[ \e_t( u, \mathcal H'(u_{\varepsilon}) \psi^2 ) +  \frac{|p-1|}{4} a \int  \bar u_{\varepsilon}^{p-2} \psi^2 d\Gamma(u) \bigg] \chi dt
 + p \int_J \int_X  \mathcal H(u_{\varepsilon}) \psi^2 |\chi'| \, d\mu \, dt \\
& \le
(1+p^2) C_1 |||\bar u_{\varepsilon}^{\frac{p}{2}} \psi|||_{J \times B_{\delta}}^2  
+ ((1+ |p|^{\beta+1})C_2 + 2C_3) |\delta'-\delta|^{-k} \int_J \int \bar u_{\varepsilon}^p \psi d\mu\, dt.
 \end{split}
\end{align*}
Applying H\"older's inequality as in \eqref{eq:hoelder application to |||}, we get
\begin{align*}
 \int_J \int \bar u_{\varepsilon}^p \psi d\mu \, dt
& \le 
 |J| \cdot ||| \bar u_{\varepsilon}^{\frac{p}{2}}|||_{I_{\delta}^{\pm} \times B_{\delta}}^2.
\end{align*}
Taking the supremum over $t_0 \in I^{\pm}_{\delta'}$ proves \eqref{eq:supsol} with $a \frac{|p-1|}{|p|}$ in place of $a \eta$. Note, however, that $\frac{|p-1|}{|p|} \ge \eta$. 
\end{proof}

The next theorem can be proved analogously to the proof of Theorem \ref{thm:MVE subsol p>2}, by applying Lemma \ref{lem:estimate supsol} instead of Theorem \ref{thm:estimate subsol p>2}.

\begin{theorem}[Mean value estimate for supersolutions] \label{thm:MVE supsol}
Let $u$ be a non-negative locally bounded local weak supersolution of the heat equation for $\e_t$ in $Q^{\pm}$.  
 Suppose H.1b holds for $u$ and the weighted Sobolev inequality \eqref{eq:wSI} holds for $\bar u_{\varepsilon}(t)$ uniformly for all $t \in I^{\pm}$.
Then there is a positive constant $C'=C'(\nu,\beta,k,\eta)$ such that the following holds for all $\delta^* \le \delta' < \delta \le 1$.
\begin{align*}
 \sup_{Q^{\pm}_{\delta'}} \{ \bar u_{\varepsilon}^p \}  
& \le  
\frac{C' A_0^{\frac{\nu+2}{2}}}{|\delta-\delta'|^{k(\nu+2)}}
|||\bar u_{\varepsilon}^{\frac{p}{2}}|||_{I_{\delta}^{\pm} \times B_{\delta}}^2,
\end{align*}
where
\[ A_0 :=  \frac{ 4|I_{\delta}^{\pm}|^{\gamma+1} C_{\mbox{\tiny{SI}}} (1+|p|)^{\beta+1}}{a\eta |\delta-\delta'|^{2k}} \left(\frac{C_1 }{|I_{\delta}^{\pm}|} + \frac{ a\eta C_{\mbox{\tiny{SI0}}}}{ C_{\mbox{\tiny{SI}}}} + C_2 + C_3 \right). \]
Here, the superscript $\pm$ is $+$ when $p\in (0,1-\eta)$ and $-$ when $p \in (-\infty,0)$.
\end{theorem}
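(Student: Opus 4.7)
The plan is to mimic the Moser iteration from the proof of Theorem \ref{thm:MVE subsol p>2}, with Lemma \ref{lem:estimate supsol} replacing Theorem \ref{thm:estimate subsol p>2} at each iteration step. Set $\theta := (\nu+2)/\nu > 1$, choose shrinking radii $\hat\delta_i = (\delta-\delta')2^{-i-1}$, $\delta_0 = \delta$, $\delta_{i+1}=\delta_i-\hat\delta_i$, and pick cutoffs $\psi_i = \psi_{\delta_{i+1},\delta_i}$ as in \eqref{eq:wSI}. Take the time cutoff $\chi$ to be the one appearing in the proof of Lemma \ref{lem:estimate supsol}, which localizes the Caccioppoli identity to $I^{\pm}_{\delta_i}$.

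At iteration step $i$, I apply Lemma \ref{lem:|||f|||} to $f = \bar u_\varepsilon^{p\theta^i/2}\psi_i$ with $\sigma=\theta$. This controls $|||\bar u_\varepsilon^{p\theta^{i+1}/2}|||^{2/\theta}_{I^\pm_{\delta_{i+1}}\times B_{\delta_{i+1}}}$ in terms of $2|I^\pm_{\delta_i}|^\gamma$ times a weighted combination of the energy $\int\!\int \psi_i^2\,d\Gamma(\bar u_\varepsilon^{p\theta^i/2})$, the integral $\int\!\int \bar u_\varepsilon^{p\theta^i}\,d\mu\,dt$, and the time-supremum $\sup_t \|\bar u_\varepsilon^{p\theta^i/2}\|_{L^2(B_{\delta_i})}^2$, with the exponents $\nu/(2r)$ and $1-\nu/(2r)$ collapsed by Young's inequality as in the proof of Lemma \ref{lem:|||f|||}. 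Lemma \ref{lem:estimate supsol} at exponent $p\theta^i$ then dominates all three of these by a constant multiple of $|||\bar u_\varepsilon^{p\theta^i/2}|||^2_{I^\pm_{\delta_i}\times B_{\delta_i}}$, after Hölder's inequality is used as in \eqref{eq:hoelder application to |||} to convert the $L^1$-type integral into a squared $|||\cdot|||$-norm. The prefactors $(1+p^2\theta^{2i})$ and $(1+|p|^{\beta+1}\theta^{(\beta+1)i})$ from Lemma \ref{lem:estimate supsol}, together with $|\delta_i-\delta_{i+1}|^{-k} = (2^{i+1}/(\delta-\delta'))^{k}$, are absorbed into a factor $C^i$ depending only on $\nu,\beta,k,\eta$, yielding the recursion
\begin{align*}
|||\bar u_\varepsilon^{p\theta^{i+1}/2}|||^{2/\theta}_{I^\pm_{\delta_{i+1}}\times B_{\delta_{i+1}}}
\le C^i\, A_0 \,|\delta-\delta'|^{-2k}\,|||\bar u_\varepsilon^{p\theta^i/2}|||^2_{I^\pm_{\delta_i}\times B_{\delta_i}}.
\end{align*}

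Iterating from $i=0$ to $i=N$, raising to the $\theta^{-(N+1)}$-power, and using the convergent geometric sums $\sum_{j\ge 0}\theta^{-j}=(\nu+2)/2$ and $\sum_{j\ge 0}j\theta^{-j}<\infty$ produces
\begin{align*}
|||\bar u_\varepsilon^{p\theta^{N+1}/2}|||^{2\theta^{-(N+1)}}_{I^\pm_{\delta'}\times B_{\delta'}}
\le C'\,(A_0|\delta-\delta'|^{-2k})^{(\nu+2)/2}\,|||\bar u_\varepsilon^{p/2}|||^2_{I^\pm_\delta\times B_\delta},
\end{align*}
uniformly in $N$. Letting $N\to\infty$, the left-hand side converges to $\sup_{Q^\pm_{\delta'}}\bar u_\varepsilon^p$ by the standard $L^q\to L^\infty$ identity as $|q|\to\infty$; local boundedness of $u$ and the lower bound $\bar u_\varepsilon\ge\varepsilon>0$ keep every quantity in the iteration finite and well-defined. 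The main technical point is ensuring that Lemma \ref{lem:estimate supsol} remains applicable at each iterated exponent $p\theta^i$: for $p\in(-\infty,0)$ this is automatic since $p\theta^i$ stays in $(-\infty,0)$ throughout, and the argument is a verbatim adaptation of Theorem \ref{thm:MVE subsol p>2}; for $p\in(0,1-\eta)$ the iterated exponents eventually leave the admissible range $(0,1-\eta)$, so the pure geometric scheme $\theta^i$ must be adapted — for instance by a variant iteration that skips past the forbidden window $[1-\eta,1+\eta]$ in a controlled number of steps while still producing a summable error — and this exponent-range bookkeeping is the delicate point in pushing the analogy through.
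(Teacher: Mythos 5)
For $p\in(-\infty,0)$ your argument is exactly the paper's intended proof: the paper gives no written proof beyond the remark that one argues as in Theorem \ref{thm:MVE subsol p>2} with Lemma \ref{lem:estimate supsol} in place of Theorem \ref{thm:estimate subsol p>2}, and your iteration (the choice $\theta=\frac{\nu+2}{\nu}$, the shrinking radii and cutoffs, Lemma \ref{lem:|||f|||} with $\sigma=\theta$, H\"older as in \eqref{eq:hoelder application to |||}, absorption of the $i$-dependent prefactors into $C^i$, and the limit $N\to\infty$, with finiteness guaranteed by local boundedness and $\bar u_{\varepsilon}\ge\varepsilon$) is a faithful rendering of it; in that range $p\theta^i$ stays in $(-\infty,0)$, so Lemma \ref{lem:estimate supsol} is available at every step and the proof is complete.

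The gap is the case $p\in(0,1-\eta)$, which you correctly flag but do not close, and your proposed patch does not work under the stated hypotheses. Skipping the window $[1-\eta,1+\eta]$ presupposes that some Caccioppoli-type estimate is available for exponents above the window; but for a function that is only a \emph{supersolution} there is none: the estimates for $p\in(1+\eta,2)$ and $p\ge 2$ (Theorem \ref{thm:estimate subsol p>2}, including its H.1b extension) rest on the subsolution direction of the time-derivative inequality \eqref{eq:steklov subsol estimate p>1}, and H.1a is not assumed here, so after finitely many steps the iteration has nothing to run on. This is not a bookkeeping issue that a cleverer exponent scheme can repair: a genuine supersolution can acquire mass at an interior time (add a mollified nonnegative source term, e.g.\ a smoothed point source, to a caloric function), producing an arbitrarily large supremum on $Q^+_{\delta'}$ while $|||\bar u_{\varepsilon}^{p/2}|||_{I^+_{\delta}\times B_{\delta}}$ stays bounded, so a sup bound by these weak norms cannot hold for supersolutions and small positive $p$. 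The positive-$p$ statement is really a statement about local weak \emph{solutions} (which is how it is used in the proof of Theorem \ref{thm:PHI}): for a solution one has in addition the subsolution estimates of Theorem \ref{thm:estimate subsol p>2}/Theorem \ref{thm:MVE subsol p>2} for exponents $\ge 1+\eta$, and then either your window-jumping idea or the standard absorption argument (bounding $\sup \bar u$ by the $L^2$-type norm and interpolating $L^2$ between $L^p$ and $L^\infty$ on nested cylinders) finishes the proof. So: the $-$ case of your proposal matches the paper; the $+$ case, as written, is incomplete, and completing it requires invoking the subsolution property (or H.1a), which neither your proposal nor the paper's one-line proof makes explicit.
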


\section{Proof of the parabolic Harnack inequality} \label{sec:PHI}

\subsection{The abstract lemma of Bombieri - Giusti}

The following lemma extends the ``abstract John-Nirenberg inequality" that was first proved by Bombieri and Giusti \cite[Theorem 4]{BomGiu72}. Our proof closely follows \cite[Lemma 2.2.6]{SC02}.

We will write $d\bar \mu = d\mu \times dt$. 
\begin{lemma} \label{lem:Bombieri}
Let $k_1,k_2  \ge 0$, $\eta \in (0,1)$, $C \in (0,\infty)$.
Let $f$ be a non-negative measurable function on $I_1^{\pm} \times B_1$ which satisfies
\begin{align*} 
& \quad \sup_{I^{\pm}_{\delta'} \times B_{\delta'}} f^p
\le   \frac{A_1}{ (\delta - \delta')^{k_1}} |||f^{\frac{p}{2}}|||_{I^{\pm}_{\delta} \times B_{\delta}}^2,
\end{align*}
for all $\delta^* \leq \delta' < \delta < 1$, $0 < p < 1-\eta$.
Suppose further that 
\begin{align} \label{eq:log f > lambda}
 \bar\mu \left(Q_1^{\pm} \cap \{ \log f > \lambda \}\right) \le \frac{A_2}{(\delta - \delta')^{k_2}} \frac{\bar \mu(Q_1^{\pm})}{\lambda}, \qquad \forall \lambda > 0.
\end{align}
Then there is a constant $A_3 \in [1,\infty)$, depending only on $\delta^*, \eta, \gamma, k_1, k_2,  A_1, A_2$, such that
\[  \sup_{I_{\delta^*} \times B_{\delta^*}} f^p \le A_3. \]
\end{lemma}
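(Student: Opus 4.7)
The plan is to follow the abstract Bombieri--Giusti iteration, in the spirit of \cite[Lemma 2.2.6]{SC02}, adapted to the Lorentz-type norm $|||\cdot|||$ in place of the usual $L^p$ norm on the right-hand side of hypothesis (i). I read the factor $(\delta-\delta')^{k_2}$ in hypothesis \eqref{eq:log f > lambda} as a fixed constant depending only on $\delta^*$ (i.e.\ as $(1-\delta^*)^{k_2}$), absorbed into $A_2$.

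First I would establish a quantitative bound for the $|||\cdot|||$-norm of an indicator. For any measurable $E\subset Q_1^{\pm}$ and any admissible pair $(r',q')$, H\"older's inequality in the time variable applied to the pointwise identity $\|\mathbf{1}_{E_t}\|_{L^{2r',2}}\asymp \mu(E_t)^{1/(2r')}$ gives
$$\|\mathbf{1}_E\|_{L^{2q'}(I_1^{\pm}\to L^{2r',2}(B_1))}^{\,2}\le C\,\bar\mu(E)^{1/\max(r',q')},$$
with $C$ depending only on $|I_1^\pm|$ and $\mu(B_1)$. Because the admissibility constraints \eqref{eq:gamma} together with $r\ge 1/(1-\gamma)$ force $\max(r',q')$ to be bounded above by some $M=M(\gamma,\nu)<\infty$, this upgrades to the uniform indicator bound $|||\mathbf{1}_E|||_{Q_\delta^{\pm}}^{\,2}\le C\,\bar\mu(E)^{1/M}$.

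Next, for any level $\lambda>0$, I would split $f^{p/2}\le e^{p\lambda/2}+f^{p/2}\mathbf{1}_{\{\log f>\lambda\}}$. The quasi-triangle inequality for each pair-norm, combined with the indicator estimate and hypothesis \eqref{eq:log f > lambda}, yields
$$|||f^{p/2}|||_{Q_\delta^\pm}^{\,2}\le C_1 e^{p\lambda}+C_2\lambda^{-1/M}\sup_{Q_\delta^\pm}f^p.$$
Setting $\Psi(\sigma):=\sup_{Q_\sigma^\pm}f^p$ and feeding this into hypothesis (i), I would then choose $\lambda=\lambda(\delta-\delta')$ so that the coefficient of $\Psi(\delta)$ equals exactly $\tfrac12$; this forces $\lambda\sim(\delta-\delta')^{-Mk_1}$. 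The outcome is the contractive recursion
$$\Psi(\delta')\le \tfrac12\,\Psi(\delta)+K(\delta-\delta',p),\qquad K(\rho,p)\le c\,\rho^{-k_1}\exp\!\bigl(c'p\,\rho^{-Mk_1}\bigr).$$

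The final step iterates this recursion along an equally spaced sequence $\sigma_i=\delta^*+(1-\delta^*)i/(2N)$, $i=0,\dots,N$, with common step $\rho_N=(1-\delta^*)/(2N)$. Telescoping $N$ times gives
$$\Psi(\delta^*)\le 2^{-N}\Psi(\sigma_N)+2K(\rho_N,p).$$
The main obstacle is the doubly-exponential dependence of $K$ on $\rho_N$ through the factor $e^{p\lambda}$: for $p$ fixed and $N\to\infty$ the error blows up, so one cannot simply let the step count diverge. I would resolve this by selecting the pair $(N,p)$ once and for all in terms of the admissible data: first fix $N=N(\delta^*,\gamma,\nu,k_1)$ as a concrete constant, then choose $p=p(N)\in(0,1-\eta)$ small enough that $p\lambda_N\le 1$, i.e.\ $p\le c_*\rho_N^{Mk_1}$. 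With these choices, $K(\rho_N,p)$ becomes a constant depending only on $\delta^*,\eta,\gamma,k_1,k_2,A_1,A_2$, and $\Psi(\sigma_N)$ is finite by hypothesis (i) applied at the fixed pair $\delta'=\sigma_N=(1+\delta^*)/2$, $\delta=(3+\delta^*)/4$. Raising to the $1/p$-th power then produces the desired bound on $\sup_{Q_{\delta^*}^\pm}f^p$, with $A_3$ depending only on the allowed parameters.
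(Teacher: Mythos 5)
Your preliminary reductions are sound: treating the factor $(\delta-\delta')^{k_2}$ in \eqref{eq:log f > lambda} as a constant is harmless, and your indicator estimate $|||\mathbf{1}_E|||^2\le C\,\bar\mu(E)^{\gamma}$ is correct, since \eqref{eq:gamma} forces $r',q'\le 1/\gamma$. The argument breaks down at the iteration. Choosing the level $\lambda$ (hence $p$) as a function of $\delta-\delta'$ alone so that the coefficient of $\Psi(\delta)$ equals $\tfrac12$ produces the additive recursion $\Psi(\delta')\le\tfrac12\Psi(\delta)+K(\delta-\delta',p)$, and after $N$ equal steps you are left with $\Psi(\delta^*)\le 2^{-N}\Psi(\sigma_N)+2K(\rho_N,p)$. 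With $N$ fixed, as you propose, the term $2^{-N}\Psi(\sigma_N)$ is merely \emph{finite}, not bounded by the admissible constants: the hypotheses of the lemma give no quantitative control of $\sup_{Q^{\pm}_{\sigma_N}}f^p$ or of $|||f^{p/2}|||$ at any fixed scale — the only quantitative information on the size of $f$ is the weak $1/\lambda$ bound \eqref{eq:log f > lambda} on the level sets of $\log f$. Hence your final estimate still depends on $f$, and no choice of a small fixed $p$ removes that dependence; conversely, letting $N\to\infty$ makes your error term $K(\rho_N,p)\sim\rho_N^{-k_1}\exp(c'p\,\rho_N^{-Mk_1})$ blow up, as you yourself observe. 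An additive $\tfrac12$-contraction cannot close the argument without an a priori bound at some starting scale, which is precisely what the lemma does not assume.

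The missing idea — and the route the paper takes — is to couple the splitting level (equivalently the exponent $p$) to the unknown quantity $\phi(\delta)=\sup_{I^{\pm}_{\delta}\times B_{\delta}}f$ itself: one splits at $\lambda=\tfrac12\log\phi$ and chooses $p=\tfrac{2}{\log\phi}\log\big((\delta-\delta')^{k_2}\log\phi/(2A_2)\big)^{\gamma}$ so that the two resulting terms are equal (this $p$ lies in $(0,1-\eta)$ once $\phi$ is large). Then, whenever $\phi(\delta)$ is large, hypothesis (i) yields the multiplicative contraction $\log\phi(\delta')\le\tfrac34\log\phi(\delta)+A(\delta-\delta')^{-k_2-2k_1/\gamma}$, while if $\phi(\delta)$ is not large the failure of the largeness conditions itself bounds $\log\phi(\delta)$ by a constant of the admissible form. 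Iterating this inequality along infinitely many scales $\delta_j\uparrow 1$ with gaps shrinking polynomially, the geometric factor $(3/4)^j$ dominates the polynomially growing additive terms and gives $\log\phi(\delta^*)\le A_3$ with $A_3$ depending only on $\delta^*,\eta,\gamma,k_1,k_2,A_1,A_2$ — without ever requiring a quantitative bound on $\Psi$ at a starting scale. Your scheme, in which $p$ and $\lambda$ depend only on $\delta-\delta'$, cannot reproduce this, because it converts the logarithmic information of \eqref{eq:log f > lambda} into an additive recursion for $\Psi$ rather than a contraction for $\log\phi$; to repair the proof you must let the exponent vary with $\sup_{Q_{\delta}^{\pm}}f$ as above.
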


\begin{proof}
If $(r',r_1')$ have H\"older conjugates $(r,r_1)$ satisfying \eqref{eq:gamma}, then $r',r_1' \le \frac{1}{\gamma}$. Therefore, at the expense of multiplying $A_1(\delta - \delta')^{-k_1}$ by $(|I_1^{\pm}| \mu(B_1))^{\frac{\gamma}{2}}$, we may assume that $|I_1^{\pm}| \mu(B_1) = 1$. Because $|I_1^{\pm}| \mu(B_1) = 1$, increasing the exponent $r$ increases the $L^r$ norm and the $L^{r,\infty}$ quasi-norm, so
\begin{align*}
||| f^{\frac{p}{2}} |||_{I^{\pm}_{\delta} \times B_{\delta}}^2 \le \| f^{\frac{p}{2}} \|_{L^{2/\gamma}\left(I^{\pm}_{\delta} \to L^{2/\gamma,2}(B_{\delta})\right)}^2.
\end{align*}
For each Lorentz space $L^{r,r_1}$ there is a constant constant $K(r,r_1)>0$ such that the quasi-norm satisfies
\begin{align} \label{eq:triangle ineq}
\|u+v\|_{r,r_1} \le K(r,r_1) \left( \|u\|_{r,r_1}  + \|v\|_{r,r_1} \right)
\end{align}
for all $u,v \in L^{r,r_1}$. 
Define
\[ \phi = \phi(\delta) :=  \sup_{I_{\delta} \times B_{\delta}} f. \]
Decomposing $I^{\pm}_{\delta} \times B_{\delta}$ into the sets where $\log f > \frac{1}{2}\log(\phi)$ and where $\log f \leq \frac{1}{2}\log(\phi)$, we get 
from \eqref{eq:triangle ineq} and \eqref{eq:log f > lambda} that
\begin{align*}
& \quad \|f^\frac{p}{2}\|_{L^{2/\gamma}(I^{\pm}_{\delta} \to L^{2/\gamma,2}(B_{\delta}))}^2 \\
& \le
K \sup_{I^{\pm}_{\delta} \times B_{\delta}} f^p
\|1_{\{f > \phi^{1/2}\}} \|_{L^{2/\gamma}(I^{\pm}_{\delta} \to L^{2/\gamma,2}(B_{\delta}))}^2 
+ K \phi^{p/2} \|1_{\{f \le \phi^{1/2}\}}\|_{L^{2/\gamma}(I^{\pm}_{\delta} \to L^{2/\gamma,2}(B_{\delta}))}^2  \\
& \le
K \phi^p \left(\frac{2A_2}{(\delta - \delta')^{k_2}\log \phi} \right)^{\gamma} + K\phi^{p/2},
\end{align*}
for some $K$ depending only on $\gamma$.
The two terms on the right hand side are equal if 
\[ p = \frac{2}{\log \phi} \log \left( \frac{(\delta - \delta')^{k_2} \log \phi}{2A_2} \right)^{\gamma}. \]
We have $p < 1 - \eta$ if $\phi$ is sufficiently large, that is, if 
\begin{align} \label{eq:A_1 2.2.10}
\phi \geq C
\end{align}
for some $C$ depending on $\eta, \gamma, A_2$. 
Hence, for $\phi \geq C$, the first hypothesis of the lemma yields
\begin{align*}
\log \phi(\delta')
& \le \frac{1}{p} \log  (2KA_1(\delta - \delta')^{-k_1})  + \frac{\log \phi}{2} \\
& \le \frac{\log \phi}{2} \left[ \frac{\log (2KA_1(\delta - \delta')^{-k_1})} { \log \left( \frac{(\delta - \delta')^{k_2} \log \phi}{2A_2} \right)^{\gamma}} + 1  \right].
\end{align*}
If 
\begin{align} \label{eq:2.2.12}
\left( \frac{(\delta - \delta')^{k_2} \log \phi}{2A_2} \right)^{\gamma} \ge \left( 2KA_1(\delta - \delta')^{-k_1} \right)^2,
 \end{align}
then 
\[ \log \phi(\delta') \leq \frac{3}{4} \log \phi. \]
On the other hand, if \eqref{eq:2.2.12} or \eqref{eq:A_1 2.2.10} is not satisfied, then
\[ \log \phi(\delta') \leq \log \phi \leq \log C + \frac{2A_2}{(\delta - \delta')^{k_2}} \left( 2KA_1(\delta - \delta')^{-k_1} \right)^{2/\gamma} \le \frac{A}{(\delta - \delta')^{k_2 + 2k_1/\gamma}}, \]
for some $A$ depending on $A_1,A_2,\eta,\gamma$.
In all cases, we have
\begin{align} \label{eq:bombieri 3/4}
 \log \phi(\delta')  
  \le 
  \frac{3}{4} \log \phi(\delta) + \frac{A}{(\delta - \delta')^{k_2 + 2k_1/\gamma}}.
\end{align}
Let $\delta_j = 1 - \frac{1-\delta*}{1+j}$. Iterating \eqref{eq:bombieri 3/4}, we get
\[ \log \phi(\delta^*) 
\le \sum_{j=0}^{\infty} \left( \frac{3}{4} \right)^j \frac{A}{ (\delta_{j+1} - \delta_j)^{k_2 + 2k_1/\gamma}} =: A_3 < \infty. \]
\end{proof}

In order to verify \eqref{eq:log f > lambda} in our context, we need the following ``log lemma" which is based on the weighted Poincar\'e inequality \eqref{eq:weighted PI}. Our proof of the log lemma roughly follows \cite[Lemma 5.4.1]{SC02}.

\begin{lemma}\label{lem:log lemma} 
Let $u$ be a non-negative locally bounded local weak solution of the heat equation for $\e_t$ in $Q^{\pm}$. Suppose H.2 holds for $u$. Suppose \eqref{eq:weighted PI} holds for $f = \log \bar u_{\varepsilon}(t)$ uniformly for all $t$ in $I^{\pm}$, respectively.
Then there exists a constant $c \in (0,\infty)$ depending on $u(a',\cdot)$ or $u(b',\cdot)$, respectively, such that, for all $\lambda >0$, $\delta \in [\delta^*,1)$,
\begin{align*} &\quad \bar\mu( \{ (t,z) \in Q^{\pm}_{\delta} : \pm \log \bar u_{\varepsilon} < - \lambda - (\pm c) \} ) \\
 & \le  \frac{3}{\lambda} \left( 1 \vee \mu(B_1) \right)  |I_{\delta}^{\pm}| \left( \frac{C_{\mbox{\tiny{wPI}}}}{a |I_{\delta}^{\pm}|} 
 +  \sum_{i=1}^m \| D_i \|_{q_i} \left( |I_{\delta}^{\pm}|^{\gamma} \vee |I_{\delta}^{\pm}| \right)  + \frac{C_2 |I_{\delta}^{\pm}|}{|1-\delta|^k} \right).
 \end{align*}
\end{lemma}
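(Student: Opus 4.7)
The strategy is to adapt Moser's log lemma (cf.~\cite[Lemma 5.4.1]{SC02}) to the present quasilinear, metric-measure-space setting. First, using that $u$ is a weak solution (hence both sub- and super-solution), the Steklov-averaging argument of Lemma~\ref{lem:steklov supsol} applied with $p = 0$ (so $\mathcal H(v) = \log \bar v$ and $\mathcal H'(u_\varepsilon) = \bar u_\varepsilon^{-1}$) and with the temporal cutoff $\chi \nearrow 1_{[t_1, t_2]}$ yields the identity
$$\int \log \bar u_\varepsilon(t_2)\, \psi^2 d\mu - \int \log \bar u_\varepsilon(t_1)\, \psi^2 d\mu = -\int_{t_1}^{t_2} \e_t(u, \bar u_\varepsilon^{-1} \psi^2)\, dt$$
for a.e.~$t_1 < t_2$ in $I^\pm$, where $\psi = \psi_{\delta, 1}$ is a cutoff for $B_\delta$ in $B_1$. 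Applying H.2 to the right-hand side and then invoking the weighted Poincar\'e inequality for $\log \bar u_\varepsilon(t)$ (noting that $\Gamma(\bar u_\varepsilon) = \Gamma(u)$ by strong locality of $\A_t$, and that the $\psi^2$-weighted mean of $\log \bar u_\varepsilon$ is $V(t) := W^{-1}\int \log \bar u_\varepsilon(t)\,\psi^2 d\mu$, with $W := \int \psi^2 d\mu$) gives the key BMO-type differential inequality
$$W(V(t_2) - V(t_1)) + \frac{a}{C_{\mbox{\tiny{wPI}}}} \int_{t_1}^{t_2}\!\!\int (\log \bar u_\varepsilon - V(t))^2 \psi^2 d\mu\, dt \le \int_{t_1}^{t_2} R(t)\, dt, \qquad (\star)$$
where $R(t) := \sum_i D_i(t)\|\psi\|_{2r_i', 2}^2 + C_2 |1-\delta^*|^{-k}\int \psi d\mu$.

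Next I would exploit the local boundedness of $u$: since $0 \le u \le U$ on the relevant cylinder, $V(t)$ lies in the bounded interval $[\log(\kappa + \varepsilon),\, \log(U + \kappa + \varepsilon)]$, whose length $L_0$ depends explicitly on $u$. Choose $t_\star := a'$ for the $+$ case and $t_\star := b'$ for the $-$ case, and set $c := L_0 \pm V(t_\star)$ (with the sign selected so that $c > 0$); then $c$ depends only on $u(t_\star,\cdot)$. This choice ensures that whenever $\pm \log \bar u_\varepsilon(t,x) < -\lambda - (\pm c)$ and $\lambda > 2 L_0$, one has automatically $|\log \bar u_\varepsilon(t,x) - V(t)| > \lambda/2$ for every $t \in I_\delta^\pm$.

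For such $\lambda$, the weighted $L^1$-Chebyshev inequality followed by Cauchy--Schwarz in space and in time bounds the $\bar\mu$-measure of the set $S := \{(t,x) \in Q_\delta^\pm : \pm \log \bar u_\varepsilon < -\lambda - (\pm c)\}$ by
$$\bar\mu(S) \le \frac{2}{\lambda}(|I_\delta^\pm|\, W)^{1/2}\left(\int_{I_\delta^\pm}\!\!\int (\log \bar u_\varepsilon - V(t))^2 \psi^2 d\mu\, dt\right)^{1/2}.$$
Applying~$(\star)$ with $t_1, t_2$ the endpoints of $I_\delta^\pm$ and using the $L^\infty$-bound on $V$ to absorb the $W(V(t_1)-V(t_2))$ term, the variance integral is controlled by $\tfrac{2 C_{\mbox{\tiny{wPI}}}}{a} \int_{I_\delta^\pm} R(t)\, dt$. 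An AM--GM splitting $\sqrt{XY} \le (X + Y)/2$ with $X := W C_{\mbox{\tiny{wPI}}}/a$ and $Y := |I_\delta^\pm|\int R\, dt$ then yields
$$\bar\mu(S) \le \frac{1}{\lambda}(1 \vee \mu(B_1))\left( \frac{C_{\mbox{\tiny{wPI}}}}{a} + |I_\delta^\pm|\int_{I_\delta^\pm} R(t)\, dt \right).$$
For $\lambda \le 2 L_0$ the trivial estimate $\bar\mu(S) \le |I_\delta^\pm|\,\mu(B_1) \le 2L_0 \lambda^{-1}|I_\delta^\pm|\mu(B_1)$ is absorbed into the same form with a larger constant. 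Finally, $\int R\, dt$ is estimated by H\"older ($\int D_i\, dt \le \|D_i\|_{q_i} |I_\delta^\pm|^{1/q_i'}$); since $1/q_i' = 1-1/q_i \in [\gamma, 1]$ by~\eqref{eq:gamma}, we get $|I_\delta^\pm|^{1/q_i'} \le |I_\delta^\pm|^\gamma \vee |I_\delta^\pm|$, producing the factor appearing in the stated bound, with the constant $3$ absorbing the AM--GM and case-split losses.

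The main technical obstacle I anticipate is the first step: $\mathcal H(v) = \log \bar v$ has derivative $1/\bar v$ unbounded near $v = 0$, so the Steklov argument of Lemma~\ref{lem:steklov supsol} requires careful justification at $p = 0$. This is handled by the $\varepsilon$-regularization ($\bar u_\varepsilon \ge \varepsilon > 0$) together with the local boundedness hypothesis, which keep $\mathcal H(u_\varepsilon)$ and $\mathcal H'(u_\varepsilon)$ bounded (in particular in $\F_{\mbox{\tiny{b}}}$), so that the convergence statements carry over and give the required \emph{identity} (rather than just inequality) for the weak solution.
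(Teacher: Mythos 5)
Your first half follows the paper's route exactly: Steklov averaging at $p=0$ (Lemma \ref{lem:steklov supsol} applied to the weak solution, giving an identity), then H.2, then the weighted Poincar\'e inequality \eqref{eq:weighted PI} for the $\psi^2$-weighted mean of $\log \bar u_{\varepsilon}$. The second half, however, abandons the key idea and the argument as written cannot deliver the stated estimate. To bound the space--time variance $\int\!\!\int(\log\bar u_{\varepsilon}-V(t))^2\psi^2\,d\mu\,dt$ from your inequality $(\star)$ you must absorb the boundary term $W\,(V(t_1)-V(t_2))$, and you do this --- as well as the case $\lambda\le 2L_0$ and the very definition of $c$ --- using the oscillation bound $L_0=\log\frac{\sup u+\kappa+\varepsilon}{\kappa+\varepsilon}$ coming from local boundedness. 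The resulting constant therefore depends on $\sup u$ and on $\varepsilon$ (it blows up as $\varepsilon\to 0$ when $\kappa=0$), whereas the lemma asserts a bound depending only on $C_{\mbox{\tiny{wPI}}}/a$, $\|D_i\|_{q_i}$, $C_2$, $|I^{\pm}_{\delta}|$, $\mu(B_1)$, $\gamma$, $k$. This independence is not cosmetic: in the proof of Theorem \ref{thm:PHI} this lemma furnishes the constant $A_2$ in hypothesis \eqref{eq:log f > lambda} of Lemma \ref{lem:Bombieri}, applied to $f=\bar u_{\varepsilon}e^{c}$ and $f=(\bar u_{\varepsilon}e^{c})^{-1}$; an $A_2$ depending on $\sup u$ would make the Harnack argument circular (the output of Lemma \ref{lem:Bombieri} is precisely a bound on $\sup f^p$), and uniformity in $\varepsilon$ is needed to let $\varepsilon\to 0$ at the end. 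Relatedly, your $c=L_0\pm V(t_\star)$ depends on $\sup u$ over the whole cylinder rather than only on the time slice $u(a',\cdot)$ or $u(b',\cdot)$, and the same $c$ must serve both the $+$ and $-$ applications in the Harnack proof.

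The missing idea is Moser's treatment of the drifting weighted mean, which the paper takes from \cite[Lemma 5.4.1]{SC02}: absorb the inhomogeneity $A_{30}(t)$ into shifted quantities $\overline w,\overline W$, so that $\frac{d}{dt}\overline W(t)\le -\frac{a}{C_{\mbox{\tiny{wPI}}}\int\psi^2 d\mu}\int|\overline w-\overline W|^2\psi^2 d\mu\le 0$; on the sublevel set at depth $\lambda$ below the terminal value $\overline W(b+a_1)$ one has $|\overline w-\overline W(t)|\ge \lambda+\overline W(t)-\overline W(b+a_1)$, and dividing the differential inequality by the square of this quantity produces the exact derivative of $-|\lambda-\overline W(b+a_1)+\overline W(t)|^{-1}$, whose time integral is at most $1/\lambda$ \emph{regardless of how much $V$ oscillates}. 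No bound on the oscillation of $V$ (hence no use of $\sup u$ or of $\varepsilon$) is ever needed; the $D_i$- and $C_2$-shifts are then removed by two separate Chebyshev estimates at level $\lambda/3$, which is where the factor $3$ and the factor $|I^{\pm}_{\delta}|^{\gamma}\vee|I^{\pm}_{\delta}|$ in the statement come from. Your plain Chebyshev/Cauchy--Schwarz variance argument cannot reproduce this $\lambda^{-1}$ bound with $u$-independent constants, so the step ``the variance integral is controlled by $\tfrac{2C_{\mbox{\tiny{wPI}}}}{a}\int R\,dt$'' is the genuine gap.
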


\begin{proof}
Let $p=0$ and $\psi = \psi_{\delta,1}$.
 Hence Lemma \ref{lem:steklov supsol} applied with $\chi \equiv 1$ yields
\begin{align*}
\int \log \bar u_{\varepsilon}(t) \psi^2 d\mu 
 -  \int \log \bar u_{\varepsilon}(t-h) \psi^2 d\mu 
 & =  \int \H(u_{\varepsilon}(t)) \psi^2 d\mu 
-  \int \H(u_{\varepsilon}(t-h)) \psi^2 d\mu \\
&  =
- \int_{t-h}^t \e_s \left( u(s),\H'(u_{\varepsilon}(s)) \psi^2 \right) ds,
\end{align*}
for any $t \in I^+_1$ and $h < a'-a$.  Multiplying each side by $\frac{1}{h}$ and letting $h \to 0$,
\begin{align*}
 \frac{d}{dt} \int \log \bar u_{\varepsilon}(t) \psi^2 d\mu 
& =
- \e_t \left( u(t),\H'(u_{\varepsilon}(t)) \psi^2 \right),
\end{align*}
where $\frac{d}{dt}$ denotes taking the left-derivative in $t$.
Thus, by H.2,
\begin{align*}
& \quad \frac{d}{dt} \int \log \bar u_{\varepsilon}(t) \psi^2 d\mu + a \int \bar u_{\varepsilon}(t)^{-2} \psi^2 d\Gamma(u(t)) \\
& \le 
\sum_{i=1}^m D_i(t) \| \psi \|_{2r_i',2}^2 + C_2 |1-\delta|^{-k} \int \psi d\mu =: A_{30}(t)
\end{align*}
for a.e.~$t \in I_1^+$.
Let 
\[ W(t) :=  \frac{\int \log \bar u_{\varepsilon}(t) \psi^2 d\mu }{ \int \psi^2 d\mu}. \]
By \eqref{eq:weighted PI},  
\begin{align*}
 \int |\log \bar u_{\varepsilon}(t) - W(t)|^2 \psi^2 d\mu  \le C_{\mbox{\tiny{wPI}}}  \int \bar u_{\varepsilon}^{-2}(t) \psi^2 d\Gamma(u(t)),
\end{align*}
for a.e.~$t \in I_1^+$.
Hence,
\begin{align*} 
& \quad  \frac{d}{dt} W(t) + \frac{a}{C_{\mbox{\tiny{wPI}}}  \int \psi^2 d\mu}  \int_{B_{\delta}} |\log \bar u_{\varepsilon}(t) - W(t)|^2 \psi^2 d\mu 
 \le
 \frac{A_{30}(t)}{\int \psi^2 d\mu}.
\end{align*}
Writing
\begin{align*}
\overline{w}(t,z) & = \log \bar u_{\varepsilon}(t,z) + \frac{\int_t^{b+a_1} A_{30} ds}{\int \psi^2 d\mu}, \\
\overline{W}(t) & = W(t) +  \frac{\int_t^{b+a_1} A_{30} ds}{\int \psi^2 d\mu},
\end{align*}
we obtain for a.e.~$t \in I_1^+$ that
\begin{align} \label{eq:5.4.2} 
\frac{d}{dt} \overline{W}(t)  + \frac{a}{C_{\mbox{\tiny{wPI}}}  \int \psi^2 d\mu}  \int_{B_{\delta}} |\overline{w} - \overline{W}|^2 \psi^2 d\mu  
\le 0.
\end{align}
Integrating over $(t,b+a_1)$, we find that $\overline{W}(b+a_1) - \overline{W}(t) \le 0$.
For $\lambda > 0$, set
\begin{align*}
\Omega^+_t(\lambda) & = \{ z \in B_{\delta} : \overline{w}(t,z) < - \lambda + \overline{W}(b+a_1) \}.
\end{align*}
Then, for a.e.~$t \in I_1^+$, $z \in \Omega^+_{t}(\lambda)$,
\begin{align} \label{eq:W und lambda} 
\overline{w}(t,z) - \overline{W}(t) < - \lambda + \overline{W}(b+a_1) - \overline{W}(t) \le - \lambda.
\end{align}
Applying \eqref{eq:W und lambda} in the inequality \eqref{eq:5.4.2}, 
\[ \frac{d}{dt} \overline{W}(t) + \frac{a}{C_{\mbox{\tiny{wPI}}}  \int \psi^2 d\mu}  |\lambda  - \overline{W}(b+a_1)+ \overline{W}(t)|^2 \mu(\Omega^+_{t}(\lambda)) 
\le 0.  \]
Dividing by $|\lambda  - \overline{W}(b+a_1)+ \overline{W}(t)|^2$, we can rewrite this inequality as
\begin{align*}
-\frac{d}{dt} |\lambda - \overline{W}(b+a_1) + \overline{W}(t)|^{-1} 
+  \frac{a}{C_{\mbox{\tiny{wPI}}}  \int \psi^2 d\mu} 
 \mu(\Omega^+_{t}(\lambda)) \le 0,
\end{align*}
or, equivalently,
\begin{align} \label{eq:mu(Omega+)}
\begin{split}
  \mu(\Omega^+_t(\lambda)) 
& \le 
\frac{C_{\mbox{\tiny{wPI}}}  \int \psi^2 d\mu}{a} 
 \left( \frac{d}{dt} |\lambda - \overline{W}(b+a_1) + \overline{W}(t)|^{-1} \right).
  \end{split}
\end{align}
Integrating over $I_1^+$,
\begin{align*}
 \overline{\mu}\left( \left\{(t,z) \in Q^+_{\delta} : \log \bar u_{\varepsilon}(t,z) + \frac{\int_t^{b+a_1} A_{30} ds}{\int \psi^2 d\mu} < - \lambda + \overline{W}(b+a_1) \right\} \right)
\le   \frac{C_{\mbox{\tiny{wPI}}}  \int \psi^2 d\mu}{a \lambda}.
\end{align*}

On the other hand,
\begin{align*}
& \quad 
\overline{\mu}\left( \left\{(t,z) \in Q^+_{\delta} : \frac{\int_t^{b+a_1} \sum_{i=1}^m D_i(s) \| \psi \|_{2r_i',2}^2 ds}{\int \psi^2 d\mu} > \frac{\lambda}{3} \right\} \right) \\
& = 
\int_{B_{\delta}} \int_{I^+_{\delta}}  1_{ \big\{\int_t^{b+a_1} \sum_{i=1}^m D_i(s) \| \psi \|_{2r_i',2}^2 ds > \frac{\lambda}{3} \int \psi^2 d\mu \big\}} dt \, d\mu \\
& \le 
\frac{3}{\lambda} \frac{\mu(B_{\delta})}{\int \psi^2 d\mu} \int_{I_{\delta}^+} \int_t^{b+a_1} \sum_{i=1}^m D_i(s) \| \psi \|_{2r_i',2}^2 ds \, dt \\
& \le 
\frac{3}{\lambda} \frac{\mu(B_{\delta})}{\int \psi^2 d\mu} |I_{\delta}^+|  \sum_{i=1}^m \| D_i \|_{q_i}  \left( 1 \vee \mu(B_1) \right) \left( |I_{\delta}^+|^{\gamma} \vee |I_{\delta}^+| \right),
\end{align*}
and
\begin{align*}
& \quad 
\overline{\mu}\left( \left\{(t,z) \in Q^+_{\delta} : \int_t^{b+a_1}\frac{  C_2 |1-\delta|^{-k} \int \psi d\mu }{\int \psi^2 d\mu} dt > \frac{\lambda}{3} \right\} \right) \\
& = 
\overline{\mu}\left( \left\{(t,z) \in Q^+_{\delta} : b+a_1 - a_{\delta} > (b+a_1-t)> \frac{\lambda}{3} \frac{|1-\delta|^k \int \psi^2 d\mu}{C_2  \int \psi d\mu}  \right\} \right) \\
& \le 
|I_{\delta}^+| \left( 1 -  \frac{\lambda}{3} \frac{|1-\delta|^k \int \psi^2 d\mu}{|I_{\delta}^+| C_2  \int \psi d\mu} \right) \mu(B_{\delta}) \\
& \le 
\frac{3}{\lambda}  \frac{|I_{\delta}^+|^2 C_2  \int \psi d\mu}{|1-\delta|^k \int \psi^2 d\mu}  \mu(B_{\delta})
\end{align*}
where we used that $1-x \le \frac{1}{x}$.
The three inequalities above yield
\begin{align*}
& \quad \ \overline{\mu}\left( \big\{(t,z) \in Q^+_{\delta} : \log \bar  u_{\varepsilon}(t,z) < - \lambda + \overline{W}(b+a_1) \big\} \right)  \\
& \le 
 \frac{3}{\lambda} \left( 1 \vee \mu(B_1) \right)  |I_{\delta}^+| \left( \frac{C_{\mbox{\tiny{wPI}}}}{a |I_{\delta}^+|} 
 +  \sum_{i=1}^m \| D_i \|_{q_i} \left( |I_{\delta}^+|^{\gamma} \vee |I_{\delta}^+| \right)  + \frac{C_2 |I_{\delta}^+|}{|1-\delta|^k} \right).
\end{align*}
This proves Lemma \ref{lem:log lemma} when $\pm$ is $+$. When $\pm$ is $-$, the proof follows the same reasoning but uses right-derivatives and the upper Steklov average instead of the lower Steklov average.
\end{proof}

\subsection{Parabolic Harnack inequality} \label{ssec:PHI}

Let $0 < \tau_1 < \tau_2 < \tau_3 < \tau_4 \leq 1$. 
Set
\begin{align*}
 Q^- &= (a + \tau_1, a + \tau_2) \times B_{\delta}, \\ 
 Q^+ &= (a + \tau_3, a + \tau_4) \times B_{\delta}.
\end{align*}
Let $I$ be an open interval containing $[a,a+\tau_4]$ and let $Q = I \times B$.

\begin{theorem}[Parabolic Harnack inequality] \label{thm:PHI}
Let $u$ be a non-negative local weak solution of the heat equation for $\e_t$ in $Q$.
Suppose H.1, H.2 hold for $u$. Suppose the weighted Sobolev inequality \eqref{eq:wSI} holds for $\bar u(t)$ and $\bar u_{\varepsilon}(t)$ uniformly for all $t \in I$ and all small $\varepsilon >0$. Suppose the weighted Poincar\'e inequality \eqref{eq:weighted PI} holds for $\log \bar u_{\varepsilon}(t)$ uniformly for all $t \in I$ and all small $\varepsilon >0$. 
Then there is a constant $C_{\mbox{\em \tiny{PHI}}} \in (0,\infty)$  such that 
\[ \sup_{Q^-} \bar u \leq C_{\mbox{\em \tiny{PHI}}} \inf_{Q^+} \bar u. \]
The constant $C_{\mbox{\em \tiny{PHI}}}$ depends only on $\tau_1$, $\tau_2$, $\tau_3$, $\tau_4$, $\delta$, $\gamma$, $k$, $\nu$, and upper bounds on $C_{\mbox{\em \tiny{wPI}}} \tau_2^{-1}$, $C_{\mbox{\em \tiny{wPI}}} (\tau_4 - \tau_2)^{-1}$, $\frac{C_{\mbox{\em \tiny{SI}}}}{a}$, $C_1 T^{\gamma}$, 
$\left( C_2+2C_3+ \frac{ C_{\mbox{\tiny{SI0}}}}{C_{\mbox{\tiny{SI}}}(\delta',\delta)}\right)T^{\gamma+1}$, $\|D_i\|_{q_i} (T^{\gamma} \vee T)$, 
where $T=\tau_2 \vee (\tau_4 - \tau_2)$.
\end{theorem}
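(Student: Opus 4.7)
The strategy is Moser's iteration combined with the abstract Bombieri--Giusti lemma (Lemma~\ref{lem:Bombieri}), applied twice: once on $Q^-$ to bound $\sup \bar u_\varepsilon$ from above, and once on $Q^+$ to bound $\inf \bar u_\varepsilon$ from below. The two bounds each carry a constant coming from the log lemma (Lemma~\ref{lem:log lemma}), and the heart of the proof is to show that these constants cancel in the ratio.

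Since $u$ is a non-negative weak solution it is in particular a supersolution, so Theorem~\ref{thm:MVE supsol} applies to $u$ on both time cylinders $Q^\pm$ with exponents in $(-\infty,0)\cup(0,1-\eta)$. Applied with $0<p<1-\eta$, it reads
\[
\sup_{Q^\pm_{\delta'}} \bar u_\varepsilon^p \le \frac{A_1}{|\delta-\delta'|^{k_1}} \, |||\bar u_\varepsilon^{p/2}|||^2_{I^\pm_\delta\times B_\delta},
\]
with $A_1$ and $k_1$ uniform in $p\in(0,1-\eta)$ (the factor $(1+|p|)^{\beta+1}$ in $A_0$ being bounded on this range). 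This is exactly hypothesis~(i) of Lemma~\ref{lem:Bombieri} for $f=\bar u_\varepsilon$ on either side, and, applied instead with $-p<0$, it yields the same inequality for $f=\bar u_\varepsilon^{-1}$.

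To obtain hypothesis~(ii), I would apply Lemma~\ref{lem:log lemma} on each side. On $Q^-$ it produces a constant $c^-$ (depending on $u(b',\cdot)$) for which $\bar\mu(\{\log\bar u_\varepsilon>\lambda-c^-\})\le A_2/\lambda$; setting $f=e^{c^-}\bar u_\varepsilon$ this becomes $\bar\mu(\{\log f>\lambda\})\le A_2/\lambda$. Lemma~\ref{lem:Bombieri} then yields $\sup_{Q^-_{\mathrm{inner}}}\bar u_\varepsilon \le e^{-c^-}A_3^{1/p}$. Symmetrically on $Q^+$, Lemma~\ref{lem:log lemma} produces $c^+$ (depending on $u(a',\cdot)$) such that, for $g=e^{-c^+}/\bar u_\varepsilon$, $\bar\mu(\{\log g>\lambda\})\le A_2/\lambda$; combined with the MVE for $\bar u_\varepsilon^{-1}$ and Lemma~\ref{lem:Bombieri}, this gives $\inf_{Q^+_{\mathrm{inner}}}\bar u_\varepsilon \ge e^{-c^+}A_3^{-1/p}$. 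Dividing the two bounds,
\[
\sup_{Q^-_{\mathrm{inner}}}\bar u_\varepsilon \le A_3^{2/p}\, e^{c^+ - c^-}\, \inf_{Q^+_{\mathrm{inner}}}\bar u_\varepsilon.
\]

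The main obstacle is to show that $e^{c^+-c^-}$ is uniformly bounded in terms of the structural data (and in particular is independent of $u$). Inspecting the proof of Lemma~\ref{lem:log lemma}, each $c^\pm$ is built from the weighted mean $\overline W$ of $\log \bar u_\varepsilon$ evaluated at the outer endpoint of $I^\pm$, plus a controlled additive contribution from the $D_i$ and $C_2$ terms. The monotonicity~\eqref{eq:5.4.2} (and its $-$-side analogue) forces $\overline W$ to be decreasing in time on the $+$ side and non-increasing in reverse time on the $-$ side, so that their values at the outer endpoints are comparable to their common values on the reference time gap $[a+\tau_2,a+\tau_3]$ separating $Q^-$ and $Q^+$; the difference contributes at most an additive constant controlled by $C_{\mathrm{wPI}}\tau_2^{-1}$, $C_{\mathrm{wPI}}(\tau_4-\tau_2)^{-1}$, $\|D_i\|_{q_i}$ and $C_2$. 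Once this cancellation is secured, letting $\varepsilon\downarrow 0$ (with $\kappa=0$) produces the desired Harnack inequality with $C_{\mathrm{PHI}}$ depending only on the quantities listed in the statement.
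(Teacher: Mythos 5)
Your overall architecture coincides with the paper's: mean value estimates for supersolutions (Theorem \ref{thm:MVE supsol}) with small positive exponents for $\bar u_{\varepsilon}$ and with negative exponents for $\bar u_{\varepsilon}^{-1}$ supply hypothesis (i) of Lemma \ref{lem:Bombieri}, the log lemma supplies the measure condition \eqref{eq:log f > lambda}, and two applications of Lemma \ref{lem:Bombieri} followed by $\varepsilon \to 0$ give the Harnack inequality. One small omission: Theorem \ref{thm:MVE supsol}, as well as H.1b and H.2, require $u$ to be \emph{locally bounded}, so before anything else you must quote Theorem \ref{thm:u loc bounded} (which uses H.1a and \eqref{eq:wSI}); the paper's proof begins with exactly this step, and "a nonnegative weak solution is a supersolution" alone does not suffice.

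The genuine gap is the step you yourself single out as the main obstacle, namely bounding $e^{c^{+}-c^{-}}$. The paper never bounds a difference of two constants: it arranges the two time families used in the two Bombieri applications so that they abut at the single time $a+\tau_2$, and it takes \emph{one} constant $c=\overline W(a+\tau_2)=W(a+\tau_2)$, the weighted mean of $\log \bar u_{\varepsilon}$ at that common time, as the anchor in both applications of Lemma \ref{lem:log lemma}; then $e^{c}$ and $e^{-c}$ cancel identically, and no quantitative information about $c$ (which depends on $u$) is ever needed. Your substitute argument --- that the monotonicity \eqref{eq:5.4.2} makes the two anchored log-means "comparable" up to structural constants --- is only one-sided: the differential inequality for $\overline W$ controls the weighted log-mean in one time direction only (monotone drift up to the controlled contribution of the $D_i$ and $C_2$ terms), while a two-sided comparison of $W$ at two distinct times by structural constants is not available a priori; such a bound is itself a Harnack-type statement about $u$, so invoking it here is circular unless you verify that the two anchor times are ordered so that only the favorable direction of the drift inequality is used, which your sketch does not do. As written ("once this cancellation is secured\dots") the decisive step is asserted rather than proved; the clean repair is precisely the paper's device of a single common anchor time, which makes the cancellation exact and removes the dependence on $u$ altogether.
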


\begin{proof}
By Theorem \ref{thm:u loc bounded}, $u$ is locally bounded, so the mean value estimates of Theorem \ref{thm:MVE supsol} hold. 
Let $A_1=C'A_0^{\frac{\nu+2}{2}}$ and $k_1 = k(\nu+2)$.
By Lemma \ref{lem:log lemma}, there exists a positive constant 
\[ c = \overline{W}(a+\tau_2) = W(a+\tau_2) = \frac{\int \log \bar u_{\varepsilon}(a+\tau_2) \psi^2 d\mu}{\int \psi^2 d\mu} \]
such that the hypotheses of Lemma \ref{lem:Bombieri} are satisfied with $f=(\bar u_{\varepsilon}e^c)$ on $I^+_1 = (a,a+\tau_2)$ and with $f=(\bar u_{\varepsilon}e^c)^{-1}$ on $I_1^- =(a+\tau_2, a+\tau_4)$.
We obtain that there exist positive constants $A_3, A_3'$ such that
\[ \sup_{Q^-} \bar u_{\varepsilon} e^c \leq A_3 \]
and
\[ \sup_{Q^+} (\bar u_{\varepsilon} e^c)^{-1} \leq A_3', \]
for any $\varepsilon \in (0,1)$.

Hence,
\[ \sup_{Q^-} \bar u_{\varepsilon} \leq e^{-c} A_3 \leq A_3 \frac{A_3'}{\sup_{Q^+} \bar u_{\varepsilon}^{-1}} \leq A_3 A_3' \inf_{Q^+} \bar u_{\varepsilon}. \]
Letting $\varepsilon \to 0$ on both sides finishes the proof.
\end{proof}

\section{Examples} \label{sec:examples}

\subsection{Quasilinear forms adapted to a Dirichlet form}

\subsubsection{Dirichlet spaces with induced metric} \label{ssec:Dirichlet spaces}
Let $(X,d)$ be a locally compact separable metric space and $\mu$ a locally finite Borel measure on $X$ with full support.
Any symmetric strongly local regular Dirichlet form $(\e,\F)$ on $L^2(X,\mu)$ induces a pseudo-metric
\begin{align*}
d_{\e}(x,y) := \sup \big\{ f(x)-f(y): f \in \F_{\mbox{\tiny{loc}}} \cap \mathcal{C}(X), \, d\Gamma(f,f) \leq d\mu \big\},
\end{align*}
where $d\Gamma(f,f)$ is the energy measure of $(\e,\F)$, $\mathcal{C}(X)$ is the space of continuous functions on $X$, and 
\[ \F_{\mbox{\tiny{{loc}}}}(U)  :=  \{ f \in L^2_{\mbox{\tiny{loc}}}(U) : \forall \textrm{ compact } K \subset U, \ \exists f^{\sharp} \in \F, f\big|_K = f^{\sharp}\big|_K \mbox{ $\mu$-a.e.} \}. \]
For an open subset $Y \subset X$, we consider
\begin{enumerate}
\item[(A1)] $d_{\e}$ is a (finite, non-degenerate) metric 
which generates the original topology on $X$, 
\item[(A2)] for every $B(x,2R) \subset Y$, the ball $B(x,R)$ is relatively compact.
\end{enumerate}
If (A1) and (A2) are satisfied on $Y$, then there exists a cutoff function for $B(x,R)$ in $B(x,R+r)$ such that
\begin{align} \label{eq:A1 cutoff}
 d\Gamma(\psi,\psi) \le 2 r^{-2} d\mu,
\end{align}
provided that $0 < r \le R$ and $B(x,2R) \subset Y$.

For instance, (A1) and (A2) are satisfied by the canonical Dirichlet forms on $\R^n$, Riemannian manifolds $(M^n,g)$ with Ricci curvature bounded below, or Riemannian complexes (see \cite{PivarskiSC}). These spaces are known to satisfy the volume doubling property and the scale-invariant Poincar\'e inequality up to some scale $R_0 \in (0,\infty]$ which depends on a lower curvature bound. Volume doubling and Poincar\'e inequality imply that for any $x \in X$, $R \in (0,R_0)$, $B_{\delta} = B(x,\delta R)$, and any $f \in \F_{\mbox{\tiny{c}}}(B(x,R))$, the weighted
Sobolev inequality \eqref{eq:wSI} holds with $k=0$ and 
\begin{align*}
C_{\mbox{\tiny{SI}}} &
= C R^2 \mu(B(x,R))^{-2/\nu} , \\
C_{\mbox{\tiny{SI0}}} &
= C' \mu(B(x,R))^{-2/\nu}
\end{align*}
and the weighted Poincar\'e inequality \eqref{eq:weighted PI} holds with 
\begin{align*}
C_{\mbox{\tiny{wPI}}}(\delta',\delta) = C'' R^2.
\end{align*}
for some constant $C,C',C'' \in (0,\infty)$ that may depend on $\delta^*$ but not on $\delta$, $\delta'$.

The parabolic Harnack inequality on Dirichlet spaces satisfying (A1) and (A2) is studied in \cite{SturmIII, LierlPHI} under the hypothesis that the scale-invariant Poincar\'e inequality and the doubling property hold locally on a subset $Y$ up to scale $R_0 >0$, that is, for balls $B(x,R)$ with $B(x,4R) \subset Y$ and $R \le R_0/4$.
Then a scale-invariant parabolic Harnack inequality holds on $Y$ up to scale $R_0$.
Though Sturm does not present the proof of this result in reasonably full detail (cf.~the discussion in \cite{LierlPHI}) and particularly an argument like the chain rules for weak time-derivatives in Section \ref{ssec:steklov chain rule} are not given in \cite{SturmII,SturmIII}, we would like to mention that, in the special case of a symmetric strongly local regular (time-dependent) Dirichlet form as considered in \cite{SturmIII}, it was communicated to the author by K.-T.~Sturm that it is possible to give a simpler proof by replacing $\H_n$ by a twice continuously differentiable function. More precisely, the author has verified that the argument works with
\begin{align*}
\mathcal{H}_n(v) := 
 \frac{1}{2} v^2 (v \wedge n)^{p-2} - \left( 1 - \frac{1}{p-1} \right) v (v \wedge n)^{p-1} +  \left( \frac{1}{2} - \frac{1}{p-1} + \frac{1}{p-1} \frac{1}{p} \right) v_n^p.
\end{align*} 
Unfortunately, it seems that this simpler argument does not extend beyond the special case of symmetric strongly local Dirichlet forms.

\subsubsection{Adapted quasilinear forms satisfy H.1 and H.2} \label{ssec:adapted forms}

In this subsection we show that quasilinear forms that are adapted to a reference Dirichlet form $(\e,\F)$ satisfy hypotheses H.1 and H.2, provided that the underlying space admits appropriate cutoff functions.

\begin{definition} \label{def:adapted form}
We say that a quasilinear form $\e_t$ is {\em adapted to $(\e,D(\e))$} if the domain of $\e_t$ is $\F=D(\e)$, and there is a positive integer $m$ such that
\begin{enumerate}
\item (Generalized uniform coerciveness) 
for all $u \in \F$,
\begin{align} \label{eq:e^A >}
\begin{split}
d\A_t(u,u)  & \ge a d\Gamma(u,u) - \sum_{i=1}^m b_i^2 u^2 d\mu  - \sum_{i=1}^m  w_{1,i}^2 d\mu 
\end{split}
\end{align}
\item (Generalized sector condition) 
for all $u,v \in \F$, $f:X \to \R$ bounded Borel measurable, $g \in L^2(X,d\Gamma(v,v))$,
\begin{align} \label{eq:e^A <} 
\begin{split}
& \quad \left| \int f g \, d\A_t(u,v) \right| \\
& \le 
\left( \bar a \, \left(\int f^2 d\Gamma(u,u) \right)^{1/2}  +  \sum_{i=1}^m \| e_i \|_{r_i,\infty} \| f u \|_{r_i'',2}  +  \sum_{i=1}^m \|w_{3,i}\|_{r_i,\infty} \| f \|_{r_i'',2} \right) \\
& \quad \left( \int g^2 d\Gamma(v,v) \right)^{1/2},
\end{split}
\end{align}
 and, for all $u,v \in \F$, and all bounded Borel measurable functions $f$ and $g$ on $X$, 
\begin{align} \label{eq:e^B <} 
\begin{split}
& \quad \left| \int f g \, d\B_t(u,v) \right| \\
 & \le \sum_{i=1}^m \| c_i \|_{r_i,\infty} \left( \int f^2 d\Gamma(u,u) \right)^{1/2} \| g v \|_{r_i'',2} + \sum_{i=1}^m \|d_i\|_{r_i,\infty} \| f u \|_{2r_i',2}  \| g v \|_{2r_i',2} \\
 & \quad  +  \sum_{i=1}^m \|w_{2,i}\|_{r_i,\infty} \| f \|_{2r_i',2} \| g v \|_{2r_i',2}.
\end{split}
\end{align}
\end{enumerate}
Here, $a$ and $\bar a$ are positive constants and the ``coefficients" $b_i,c_i,d_i,e_i,w_{1,i},w_{2,i},w_{3,i}$ are non-negative functions of $(x,t)$ and each coefficient is in $L^{q_i}(I \to L^{{r_i},\infty}(B))$ for some $(r_i,q_i)$. The pair $(r_i,q_i)$ may be different for each coefficient but, for some fixed $\gamma>0$, all pairs $(r_i,q_i)$ must satisfy \eqref{eq:gamma}.
\end{definition}

\begin{proposition} \label{prop:adapted forms H.1 H.2}
Suppose the reference Dirichlet form $(\e,\F)$ satisfies (A1)-(A2). Let $I$ be a bounded open time-interval and $U \subset X$ open. If $\e_t$ is a quasilinear form adapted to $(\e,\F)$ then $\e_t$ satisfies H.1a, H.1b and H.2 for all $u \in L^2_{\mbox{\tiny{loc}}}(I \to L^2(U))$ with $\kappa = \sum_i \|w_{1,i} \| + \|w_{2,i}\| + \|w_{3,i}\|$.
\end{proposition}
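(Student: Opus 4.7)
The plan is to expand $\e_t(u, \H'(u_\varepsilon)\psi^2)$ (or $\e_t(u, \H'_n(u)\psi^2)$ for H.1a) using the product rule and right-chain rule for $\A_t$ together with the one-sided product rule for $\B_t$. This produces three pieces: the diagonal energy-like term $\int \psi^2 \H''(u_\varepsilon)\, d\A_t(u,u)$, the cross cutoff term $2\int \psi\,\H'(u_\varepsilon)\,d\A_t(u,\psi)$, and the $\B$ term $\int \psi^2\, d\B_t(u,\H'(u_\varepsilon))$. For H.1a, since $\H'_n$ is only continuous and not $C^1$ at $\bar v=n$, the chain rule is applied via a standard mollification: approximate $(\bar v \wedge n)^{p-2}$ by a $C^1$ function and pass to the limit, using that the singular level set contributes no additional mass to $d\A_t(u,u)$.

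The diagonal piece is treated using the generalized coerciveness \eqref{eq:e^A >}. In H.1a, the identity $\H''_n(u) = \bar u_n^{p-2} + (p-2)\, 1_{\{\bar u \le n\}} \bar u_n^{p-2}$ (valid a.e.) yields precisely the two-piece good term on the left of H.1a. In H.1b, $\H''(u_\varepsilon) = (p-1)\bar u_\varepsilon^{p-2}$ combines with the sign factor $\frac{1-p}{|1-p|}$ in front of $\e_t$ to produce $|p-1|\, a \int \bar u_\varepsilon^{p-2}\psi^2 d\Gamma(u)$, which is four times the target coefficient in H.1b and hence leaves $\frac{3|p-1|}{4}\, a$ of coercive slack for absorption. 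In H.2, $-\psi^2 \H''(u_\varepsilon)\, d\A_t(u,u) = \psi^2 \bar u_\varepsilon^{-2}\, d\A_t(u,u)$ gives $a \int \bar u_\varepsilon^{-2}\psi^2 d\Gamma(u)$, matching the good term on the left of H.2.

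The cross term is bounded via the generalized sector condition \eqref{eq:e^A <} with the choice $f = \psi \bar u_n^{(p-2)/2}$, $g = \bar u_n^{p/2}$, $v = \psi$ (and analogues for H.1b and H.2); the $\B$-term is bounded via \eqref{eq:e^B <} with $f = \psi$, $g = \psi$, $v = \H'(u_\varepsilon)$. The cutoff estimate \eqref{eq:A1 cutoff} replaces $d\Gamma(\psi)$ by $2 r^{-2}\, d\mu$ with $r = (\delta - \delta')R$, extracting the factor $|\delta'-\delta|^{-k}$ (with $k=2$). Young's inequality then absorbs the $\bar a(\int \psi^2 \H''(\cdot)\, d\Gamma(u))^{1/2}$-type factors into the coercive slack from the previous step. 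The remaining terms are products of Lorentz norms of the coefficients $b_i,c_i,d_i,e_i$ against Lorentz norms of $\psi\bar u\bar u_n^{(p-2)/2}$; integrating in time via Hölder's inequality with exponents $(q_i,q_i')$ and using the finiteness $\|c_i\|_{L^{q_i}(I\to L^{r_i,\infty})}<\infty$ lets these contributions fit into $C_1\, |||\bar u\bar u_n^{(p-2)/2}\psi\chi^{1/2}|||^2_{I\times B_1}$. The weight contributions $w_{\bullet,i}^2$ (bounded by powers of $\kappa \le \bar u$) assemble into the lower-order term with prefactor $p^\beta C_2 |\delta'-\delta|^{-k}$.

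The main obstacle will be the Lorentz-exponent bookkeeping: the sector conditions produce products such as $\|e_i\|_{r_i,\infty}\|\psi\bar u\bar u_n^{(p-2)/2}\|_{r_i'',2}$ and $\|d_i\|_{r_i,\infty}\|\psi\bar u\bar u_n^{(p-2)/2}\|_{2r_i',2}^2$ which, after Hölder in time, must exactly match the shape of the $|||\cdot|||$ norm. The admissibility condition \eqref{eq:gamma} is imposed on all coefficient pairs in Definition \ref{def:adapted form} precisely so that this matching is automatic; taking the supremum over admissible pairs then recognizes $|||\cdot|||$ on the right-hand side of each of H.1a, H.1b, and H.2.
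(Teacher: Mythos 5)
Your overall route coincides with the paper's: the proposition is proved there by first establishing pointwise-in-$t$ Cacciopoli-type estimates (Lemma \ref{lem:pre-H.1a}, Lemma \ref{lem:pre-H.1b}, and Remark \ref{rem:pre-H} for $m>1$) obtained exactly along the lines you describe --- expand $\e_t(u,\H_n'(u)\psi^2)$, resp.\ $\e_t(u,\H'(u_{\varepsilon})\psi^2)$, via right-linearity, the right product/chain rules and right strong locality into a diagonal term, a cutoff cross term and a $\B_t$-term; apply \eqref{eq:e^A >} to the diagonal term and \eqref{eq:e^A <}, \eqref{eq:e^B <} to the others; absorb by Young's inequality; use \eqref{eq:A1 cutoff} to trade $d\Gamma(\psi,\psi)$ for $|\delta-\delta'|^{-k}d\mu$; then finish with the Lorentz--H\"older inequality and H\"older in time, H.2 being the case $p=0$ in which the coefficient norms become the functions $D_i(t)$. (The paper treats the kink of $\H_n'$ at $\bar u=n$ by the product rule and right strong locality rather than by mollification; your mollification is an acceptable variant.)

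However, your concrete choices in the sector conditions are off, and the one for the $\B_t$-term would fail as stated. Taking $f=\psi$, $g=\psi$, $v=\H'(u_{\varepsilon})$ in \eqref{eq:e^B <} produces the \emph{unweighted} energy factor $\left(\int \psi^2\, d\Gamma(u,u)\right)^{1/2}$, since the $\Gamma(u,u)$-slot in \eqref{eq:e^B <} always carries $f$; this cannot be absorbed into the weighted coercive slack $\int \bar u_{\varepsilon}^{\,p-2}\psi^2\, d\Gamma(u,u)$ except at the cost of constants depending on $\|u\|_{\infty}$, which is not what the proposition delivers (the constants must come only from $a$, $\bar a$ and the coefficient norms). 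Moreover the companion Lorentz factor $\|\psi\,\H'(u_{\varepsilon})\|\sim\|\bar u_{\varepsilon}^{\,p-1}\psi\|$ has the wrong homogeneity to be recognized as $|||\bar u_{\varepsilon}^{\,p/2}\psi|||^2$ (the powers match only at $p=2$). The correct split, as in Lemmas \ref{lem:pre-H.1a} and \ref{lem:pre-H.1b}, keeps $v=\psi$ (resp.\ $v=\bar u\psi$ in the subsolution case) and lets $f$ carry the weight $\bar u_{\varepsilon}^{(p-2)/2}\psi$ while $gv$ carries $\bar u_{\varepsilon}^{p/2}\psi$, so every resulting term is either absorbable or of the form coefficient-norm times $\|\bar u_{\varepsilon}^{p/2}\psi\|_{r'',2}^2$ or $\|\bar u_{\varepsilon}^{p/2}\psi\|_{2r',2}^2$, which after H\"older in time fits into the $|||\cdot|||$-norm via \eqref{eq:gamma}. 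Similarly, in the cross term for H.1a your choice $g=\bar u_n^{p/2}$ gives $fg=\psi\,\bar u_n^{\,p-1}$, which differs from the required weight $\psi\,\bar u\,\bar u_n^{\,p-2}$ on $\{\bar u>n\}$; take $g=\bar u\,\bar u_n^{(p-2)/2}$ instead. With these (routine) corrections your argument is precisely the paper's.
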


In the following results we assume the volume doubling property and the Poincar\'e inquality ``locally up to scale $R_0>0$". For the precise definitions of these properties, we refer to \cite{LierlPHI}.

\begin{theorem}[Scale-invariant parabolic Harnack inequality] \label{thm:scale-invariant PHI}
Suppose the reference Dirichlet form $(\e,\F)$ satisfies (A1)-(A2), volume doubling and the scale-invariant Poincar\'e inequality on $Y \subset X$ up to scale $R_0>0$. Let $\e_t$ be a quasilinear form adapted to $(\e,\F)$. Then $\e_t$ satisfies the scale-invariant parabolic Harnack inequality up to scale $R_0$:  
There is a positive constant $C_{\mbox{\tiny{PHI}}}$ such that for any $s \in \R$, any ball $B(x,R)$ with $B(x,2R) \subset Y$ and $R \le R_0$,
and for any non-negative local weak solution $u$ for $\e_t$ in $Q = (s,s+\tau R^2) \times B(x,R)$, it holds
\[ \sup_{Q_-} u + \kappa \le C_{\mbox{\tiny{PHI}}} (\inf_{Q^+} u + \kappa), \]
where $Q^- = (s + \frac{1}{4}\tau R^2,s+\frac{1}{2} \tau R^2) \times B(x,\delta R)$ and $Q^+ = (s + \frac{3}{4}\tau R^2,s+ \tau R^2) \times B(x,\delta R)$, and
$\kappa = \sum_i \|w_{1,i} \| + \|w_{2,i}\| + \|w_{3,i}\|$.

The constant $C_{\mbox{\tiny{PHI}}}$ depends only on $\tau$, $\delta$, $a$, $\bar a$, the norms of the coefficients in their respective spaces, the volume doubling constant, the Poincar\'e constant, and - unless $\gamma,b,c,d,e,w_1,w_2,w_3$ all vanish - also on an upper bound on $R_0^2$.
\end{theorem}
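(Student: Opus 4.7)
The plan is to assemble the three main ingredients developed earlier in the paper --- Proposition \ref{prop:adapted forms H.1 H.2} (adapted forms satisfy the structural hypotheses), the geometric facts recalled after \eqref{eq:A1 cutoff} (doubling plus Poincar\'e implies the weighted Sobolev and weighted Poincar\'e inequalities at the right scale), and the abstract Harnack inequality Theorem \ref{thm:PHI} --- and then to check by bookkeeping that the resulting constant $C_{\mathrm{PHI}}$ depends only on the data listed in the statement.

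First, by Proposition \ref{prop:adapted forms H.1 H.2} the adaptedness of $\e_t$ to $(\e,\F)$ yields H.1a, H.1b and H.2 for the non-negative local weak solution $u$, with $\kappa=\sum_i\|w_{1,i}\|+\|w_{2,i}\|+\|w_{3,i}\|$ and with constants $C_1,C_2$ and data $D_i,(r_i',q_i')$ determined explicitly by $a,\bar a$ and the Lorentz-Bochner norms of the coefficients $b_i,c_i,d_i,e_i,w_{1,i},w_{2,i},w_{3,i}$ on $Q$. Second, for $B_\delta=B(x,\delta R)$ with $R\le R_0$ and $B(x,2R)\subset Y$, the cutoff estimate \eqref{eq:A1 cutoff} combined with volume doubling and the scale-invariant Poincar\'e inequality on $Y$ up to scale $R_0$ gives, through the standard chain of arguments recalled in Section \ref{ssec:Dirichlet spaces}, the weighted Sobolev inequality \eqref{eq:wSI} with $k=0$, $C_{\mathrm{SI}}\le CR^2\mu(B(x,R))^{-2/\nu}$, $C_{\mathrm{SI0}}\le C'\mu(B(x,R))^{-2/\nu}$, and the weighted Poincar\'e inequality \eqref{eq:weighted PI} for $\log\bar u_\varepsilon$ with $C_{\mathrm{wPI}}\le C''R^2$, uniformly in $\delta^*\le\delta'<\delta\le1$ and $\varepsilon\in(0,1)$, the constants $C,C',C''$ depending only on the doubling and Poincar\'e constants on $Y$.

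Third, apply Theorem \ref{thm:PHI} on $I=(s,s+\tau R^2)$ with $\tau_i=\tfrac{i}{4}\tau R^2$, so that $T=\tau_2\vee(\tau_4-\tau_2)=\tfrac12\tau R^2\le\tfrac12\tau R_0^2$. A rescaling of $u$ by the constant $\mu(B(x,R))^{-1/\nu}$ absorbs the volume factors in $C_{\mathrm{SI}}, C_{\mathrm{SI0}}$, after which each of the quantities controlling $C_{\mathrm{PHI}}$ in Theorem \ref{thm:PHI} is dominated by data admitted in the present statement: $C_{\mathrm{wPI}}\tau_2^{-1}$ and $C_{\mathrm{wPI}}(\tau_4-\tau_2)^{-1}$ are $\lesssim\tau^{-1}$; $C_{\mathrm{SI}}/a\lesssim R_0^2/a$; while $C_1T^\gamma$, $(C_2+2C_3+C_{\mathrm{SI0}}/C_{\mathrm{SI}})T^{\gamma+1}$ and $\|D_i\|_{q_i}(T^\gamma\vee T)$ reduce to the Lorentz-Bochner norms of the coefficients times powers of $R_0^2$. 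The output of Theorem \ref{thm:PHI} is then $\sup_{Q^-}\bar u\le C_{\mathrm{PHI}}\inf_{Q^+}\bar u$, which is the desired estimate since $\bar u=u+\kappa$.

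The main obstacle is the bookkeeping in the third step, in particular the verification that the Lorentz-integrability constraint \eqref{eq:gamma} on the pairs $(r_i,q_i)$ is precisely what makes the coefficient norms compatible with the parabolic rescaling $t\sim R^2$, $x\sim R$. The exponent condition $1-1/q-\nu/(2r)\ge\gamma$ is exactly the one that forces the scaling exponent of $\|\cdot\|_{L^{q}(I\to L^{r,\infty})}$ under parabolic dilation to be $\le 2(1-\gamma)$, so the factor $T^\gamma\sim R^{2\gamma}$ appearing in Theorem \ref{thm:PHI} absorbs the spatial factor up to a bounded power of $R_0$. This is the same balance that underlies the Aronson-Serrin hypotheses in \cite{AS67}, and it is exactly what makes the Harnack inequality \emph{scale-invariant} on $Y$ up to scale $R_0$; the dependence on $R_0^2$ in the final constant disappears precisely when all of $b_i,c_i,d_i,e_i,w_{1,i},w_{2,i},w_{3,i}$ vanish, as stated.
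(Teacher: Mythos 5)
Your proposal follows essentially the same route as the paper: invoke Proposition \ref{prop:adapted forms H.1 H.2} (via Lemmas \ref{lem:pre-H.1a}, \ref{lem:pre-H.1b} and Remark \ref{rem:pre-H}) to get H.1 and H.2 for the adapted form, use the doubling/Poincar\'e consequences of Section \ref{ssec:Dirichlet spaces} to get \eqref{eq:wSI} and \eqref{eq:weighted PI} at scale $R$, and then apply Theorem \ref{thm:PHI}; your extra bookkeeping of the constant dependence is consistent with the statement, except that the remark about rescaling $u$ by $\mu(B(x,R))^{-1/\nu}$ is not quite the right mechanism (the weighted Sobolev inequality is homogeneous in $f$, so multiplying $u$ by a constant does not touch $C_{\mbox{\tiny{SI}}}$, $C_{\mbox{\tiny{SI0}}}$; the volume factors are instead absorbed by normalizing the measure/time, as in the proof of Lemma \ref{lem:Bombieri}). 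This is a minor point of normalization that the paper's own one-line proof also leaves implicit, so the argument is correct and matches the paper's approach.
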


For the proofs of Proposition \ref{prop:adapted forms H.1 H.2} and Theorem \ref{thm:scale-invariant PHI} we need two lemmas stated below.

\begin{theorem} \label{thm:Hoelder} 
Suppose the reference Dirichlet form $(\e,\F)$ satisfies (A1)-(A2), volume doubling and the scale-invariant Poincar\'e inequality on $Y \subset X$ up to scale $R_0>0$. Let $\e_t$ be a quasilinear form adapted to $(\e,\F)$.

Let $u$ be a non-negative local weak solution of the heat equation for $\e_t$ in $Q = (s,s+\tau R^2) \times B(x,R)$ where $s \in \R$, $B(x,2R) \subset Y$ and $R \le R_0$.
Then $u$ has a continuous version 
which satisfies
 \[ \sup_{(t,y),(t',y')\in Q'} \left\{ \frac{ |u(t,y) - u(t',y')| }{ [ |t-t'|^{1/2} + d(y,y')]^{\alpha} } \right \}
\leq \frac{C}{r^{\alpha} } \sup_{Q} |\bar u| \]
where $Q'= (s+(1-\delta)\tau R^2),s+\tau R^2) \times B(x,\delta R)$.
The constant $C >0$ and the H\"older exponent $\alpha > 0$ depend at most on
$\tau$, $\delta$, $\gamma$, $a$, $\bar a$, the norms of the coefficients $b_i,c_i,d_i,e_i,w_{1,i},w_{2,i},w_{3,i}$ in their respective spaces, the volume doubling constant, the Poincar\'e constant, and - unless the coefficients all vanish - also on an upper bound on $R_0^2$.
\end{theorem}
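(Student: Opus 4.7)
The plan is to derive H\"older continuity from the scale-invariant parabolic Harnack inequality of Theorem \ref{thm:scale-invariant PHI} via Moser's classical oscillation-reduction argument, iterated at geometric parabolic scales. A continuous version of $u$ is produced simultaneously with the modulus-of-continuity estimate.

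The core step is a one-step oscillation decay: there exist $\rho \in (0,1)$ and $\theta \in (0,1)$, depending only on $C_{\mbox{\tiny{PHI}}}$, such that for every non-negative local weak solution $u$ on a parabolic cylinder $Q_R = (s, s+\tau R^2) \times B(x,R)$ with $B(x,2R) \subset Y$ and $R \le R_0$, one has
\[
\operatorname{osc}_{\rho Q_R^+} u \le \theta \operatorname{osc}_{Q_R} u + C'\kappa
\]
for an appropriately scaled sub-cylinder $\rho Q_R^+$ in the future part of $Q_R$. Setting $M = \sup_{Q_R} u$ and $m = \inf_{Q_R} u$, I would obtain this by applying Theorem \ref{thm:scale-invariant PHI} to the two non-negative auxiliary functions $v_1 := M + \kappa - u$ and $v_2 := u - m + \kappa$ on matching past--future sub-cylinder pairs. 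Using that $\A_t$ is right-strongly local, $\B_t$ right-local, and both are right-linear, one checks that each $v_j$ is a non-negative local weak solution of the heat equation for a quasilinear form $\tilde\e_t^{(j)}$ adapted to the same reference Dirichlet form $(\e,\F)$, with coefficients in the same norm classes as those of $\e_t$. Subtracting the two Harnack inequalities and rearranging yields the displayed geometric decay with $\theta = (C_{\mbox{\tiny{PHI}}} - 1)/C_{\mbox{\tiny{PHI}}}$.

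Second, I iterate along the geometric sequence $r_j = \rho^j R$. A standard telescoping gives
\[
\operatorname{osc}_{Q_{r_j}} u \le \theta^j \operatorname{osc}_{Q_R} u + \frac{C'\kappa}{1-\theta},
\]
and rewriting $\theta^j = (r_j/R)^{\alpha}$ with $\alpha = \log(1/\theta)/\log(1/\rho)$ extends the estimate to all radii $r \in (0,R]$ by monotonicity. For any two points $(t,y), (t',y') \in Q'$, applying this at the parabolic scale $r = |t-t'|^{1/2} + d(y,y')$ and bounding $\operatorname{osc}_{Q_R} u \le 2 \sup_Q |\bar u|$ produces the pointwise H\"older estimate claimed in the theorem.

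The main technical obstacle is the first step: one must verify that $v_1$ and $v_2$ fall within the adapted-form framework of Definition \ref{def:adapted form}, with structural constants ultimately controlled by those of $\e_t$ alone, so that $\theta$ and the H\"older exponent are independent of $u$. Coerciveness \eqref{eq:e^A >} transfers immediately, since its left-hand side depends on $u$ only through $d\Gamma(u,u) = d\Gamma(v_j, v_j)$. The sector bounds \eqref{eq:e^A <}--\eqref{eq:e^B <}, however, generate additive contributions proportional to $M$ and $m$, which have to be carefully absorbed into enlarged $w_{\cdot,i}$-type coefficients whose Lorentz norms remain controlled by $\sup_Q |\bar u|$ times the original coefficient norms. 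The resulting enlarged effective $\kappa$ is precisely what produces the factor $\sup_Q |\bar u|$ on the right-hand side of the theorem, while $C$ and $\alpha$ themselves depend only on the original structural data.
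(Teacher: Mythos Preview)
Your proposal is correct and matches the paper's approach: the paper's own ``proof'' simply states that the result is a standard application of the parabolic Harnack inequality of Theorem~\ref{thm:PHI} and refers to \cite[Theorem~4]{AS67} for the details. What you have sketched is precisely that Aronson--Serrin oscillation-decay argument, including the correct handling of the quasilinear structure: the auxiliary functions $M-u$ and $u-m$ solve modified equations whose $b,c,d,e$-norms are unchanged (so $\theta$ and $\alpha$ are $u$-independent), while the enlarged $w$-coefficients enter only through the additive $\kappa$-term, producing the factor $\sup_Q|\bar u|$ on the right-hand side.
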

\begin{proof}
We omit the proof because it is a standard application of the parabolic Harnack inequality which we proved in Theorem \ref{thm:PHI}. See \cite[Theorem 4]{AS67} for details.
\end{proof}

\begin{remark}
Assumptions (A1)-(A2) in Proposition \ref{thm:scale-invariant PHI}, Theorem \ref{thm:Hoelder} and in the maximum principle of Theorem \ref{thm:max principle} can be relaxed: We may instead assume that (A2) holds for metric balls in $(X,d)$, and the cutoff Sobolev inequality on annuli, CSA($\Psi$), holds (see \cite{AB15} for the definition). In this case, the time-space scaling has to be changed in the obvious way from $R^2$ to $\Psi(R)$ in the Poincar\'e inequality, the Sobolev inequality and in Theorem \ref{thm:scale-invariant PHI}, and from $|t-t'|^{1/2}$ to $\Psi^{-1}(|t-t'|)$ in Theorem \ref{thm:Hoelder}. The constants $C$ will then depend also on the constants and exponents appearing in CSA($\Psi$). 
\end{remark}

\begin{lemma} \label{lem:pre-H.1a} If $\e_t$ is a quasilinear form adapted to $(\e,\F)$ with $m=1$ then, for any $t \in \R$, any non-negative $u \in \F$, $\kappa >0$, $n \ge \kappa$ positive integer, $p \in [2,\infty)$, 
\begin{align} \label{eq:adapted form H_n}
\begin{split}
& \quad -  \e_t( u(t), \mathcal H'_n(u(t)) \psi^2 )  + \frac{a}{2} \int \bar u_n^{p-2} \psi^2 d\Gamma(u,u) 
+ (p-2) a \int_{\{\bar u \le n\}} \bar u_n^{p-2} \psi^2 d\Gamma(u,u) \\
& \le 
\left( (p-1)  \left(\|b\|_{r,\infty}^2 + \bigg\|\frac{w_1}{\kappa} \bigg\|_{r,\infty}^2 \right) + \frac{4}{a} \| c \|_{r,\infty}^2 + \| e \|_{r,\infty}^2 + \bigg\|\frac{w_3}{\kappa} \bigg\|_{r,\infty}^2 \right) \left\| \bar u \bar u_n^{\frac{p-2}{2}} \psi \right\|_{r'',2}^2  \\
& \quad + 2 \left( \|d\|_{r,\infty} +  \bigg\|\frac{w_2}{\kappa} \bigg\|_{r,\infty}  \right) \| \bar u_n^{\frac{p-2}{2}} \bar u \psi \|_{2r',2}^2 
+ 4\left(\frac{4\bar a^2}{a} + 1 \right) \int \bar u^2 \bar u_n^{p-2} d\Gamma(\psi,\psi).
\end{split}
\end{align}
\end{lemma}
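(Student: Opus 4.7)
The plan is to expand $\e_t(u,\mathcal H_n'(u)\psi^2)$ into its $\A_t$ and $\B_t$ parts via the product and chain rules of Definition~\ref{def:quasilinear form}, and to bound each piece using the adapted-form hypotheses \eqref{eq:e^A >}--\eqref{eq:e^B <}. By the right-product rule for $\A_t$ and for $\B_t$ (the latter iterated once, writing $\psi^2 = \psi\cdot\psi$),
\begin{align*}
\int d\A_t(u,\mathcal H_n'(u)\psi^2) &= \int \psi^2\, d\A_t(u,\mathcal H_n'(u)) + 2\int \psi\mathcal H_n'(u)\, d\A_t(u,\psi), \\
\int d\B_t(u,\mathcal H_n'(u)\psi^2) &= \int \psi^2\, d\B_t(u,\mathcal H_n'(u)).
\end{align*}
After approximating $\mathcal H_n'$ by $\mathcal C^1$ functions on $[0,\infty)$ (to handle the kink at $\bar u = n$) and applying the right-chain rule together with the right-strong locality of $\A_t$, the interior $\A$-piece reduces to $\int \psi^2 F'(u)\, d\A_t(u,u)$, where a direct differentiation of $\mathcal H_n'(v) = \bar v\bar v_n^{p-2} - \kappa^{p-1}$ gives $F'(u) = \bar u_n^{p-2} + (p-2)\mathbf{1}_{\{\bar u \le n\}}\bar u_n^{p-2}$. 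This already reproduces the two $\Gamma(u,u)$-integrals on the LHS of \eqref{eq:adapted form H_n}.

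Coercivity \eqref{eq:e^A >}, applied pointwise against the non-negative weight $\psi^2 F'(u) \le (p-1)\bar u_n^{p-2}\psi^2$, contributes $a\int \psi^2 F'(u)\, d\Gamma(u,u)$ on the good side, up to error terms that Lorentz-H\"older \eqref{eq:lorentz-hoelder}--\eqref{eq:lorentz power} combined with $u \le \bar u$ and $\kappa \le \bar u$ bound by $(p-1)(\|b\|_{r,\infty}^2 + \|w_1/\kappa\|_{r,\infty}^2)\|\bar u\bar u_n^{(p-2)/2}\psi\|_{r'',2}^2$. For the $\A$-cross term I would apply \eqref{eq:e^A <} with $f = \bar u_n^{(p-2)/2}\psi$, $g = \mathcal H_n'(u)/\bar u_n^{(p-2)/2}$, $v = \psi$: the pointwise inequality $0 \le \mathcal H_n'(u) \le \bar u\bar u_n^{p-2}$ controls $\int g^2 d\Gamma(\psi,\psi)$ by $\int \bar u^2\bar u_n^{p-2}\, d\Gamma(\psi,\psi)$, while $\int f^2\, d\Gamma(u,u) = \int \bar u_n^{p-2}\psi^2\, d\Gamma(u,u)$ is precisely the quantity to be absorbed. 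Young splits $2\bar a\sqrt{XY} \le \tfrac{a}{4}X + \tfrac{4\bar a^2}{a}Y$ and $2\|e\|_{r,\infty}\sqrt{YZ} \le \|e\|_{r,\infty}^2 Z + Y$ (and the analogous split for $\|w_3/\kappa\|$, using $\kappa \le \bar u$ to turn $\|w_3\|\cdot\|\bar u_n^{(p-2)/2}\psi\|$ into $\|w_3/\kappa\|\cdot\|\bar u\bar u_n^{(p-2)/2}\psi\|$) produce the $\|e\|^2$, $\|w_3/\kappa\|^2$, and $d\Gamma(\psi,\psi)$ contributions. The $\B$-term is handled analogously via \eqref{eq:e^B <} with $f = \bar u_n^{(p-2)/2}\psi$, $g = \bar u_n^{-(p-2)/2}\psi$, $v = \mathcal H_n'(u)$; one more Young step on the $\|c\|$-term yields the $\tfrac{4}{a}\|c\|^2$, $\|d\|$, and $\|w_2/\kappa\|$ contributions. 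The absorbed $\Gamma(u,u)$-pieces are chosen to total at most $\tfrac{a}{2}\int \bar u_n^{p-2}\psi^2\, d\Gamma(u,u)$, matching the LHS coefficient of \eqref{eq:adapted form H_n}.

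The main obstacle is not conceptual but organizational: the four coercivity/sector applications each produce several cross-terms, and one must choose $(f,g,v)$ consistently so that the pointwise bounds $\mathcal H_n'(u) \le \bar u\bar u_n^{p-2}$ and $\kappa \le \bar u$ convert every factor into either a Lorentz norm of $\bar u\bar u_n^{(p-2)/2}\psi$ or into $\int \bar u^2\bar u_n^{p-2}\, d\Gamma(\psi,\psi)$, while the various Young constants add up to no more than $\tfrac{a}{2}$ on the absorbable $\Gamma(u,u)$-piece. The only genuinely technical point is justifying the chain-rule identity for the piecewise-$\mathcal C^1$ function $\mathcal H_n'$; a standard mollification argument, together with the right-strong locality of $\A_t$, handles the level set $\{\bar u = n\}$ in the limit.
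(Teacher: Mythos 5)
Your proposal is correct and follows essentially the same route as the paper: decompose $\e_t$ into its $\A_t$ and $\B_t$ parts, use the right-product/chain rules and right strong locality to isolate the terms $\int \bar u_n^{p-2}\psi^2\,d\A_t(u,u)$ and $(p-2)\int_{\{\bar u\le n\}}\bar u_n^{p-2}\psi^2\,d\A_t(u,u)$, apply the coercivity bound \eqref{eq:e^A >} there and the sector conditions \eqref{eq:e^A <}, \eqref{eq:e^B <} to the cross and $\B$ terms, then finish with $\kappa\le\bar u$, Lorentz--H\"older and Young. The only (harmless) differences are cosmetic: you treat $\mathcal H_n'(u)$ as a single piece via $0\le\mathcal H_n'(u)\le\bar u\,\bar u_n^{p-2}$ where the paper splits off the $\kappa^{p-1}$ part and estimates it separately, and you make explicit the mollification needed to apply the chain rule to the merely piecewise-$\mathcal C^1$ function $\mathcal H_n'$, which the paper leaves implicit.
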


\begin{proof}
It suffices to give the proof in the case $m=1$.
We use the decomposition
$\e_t(f,g) = \int d\A_t(f,g) +  \int d\B_t(f,g)$ 
and estimate each integral separately.
 We write $u$ for $u(t)$ and $u_n$ for $u_n(t)$. 
By the chain rule, right strong locality and right linearity, we have
\begin{align*}
& \quad - \int d\A_t (u, \mathcal H'_n(u) \psi^2) \\
& = - \int d\A_t(u,(\bar u \bar u_n^{p-2}+\kappa^{p-1})\psi^2) \\
& = - \int \bar u_n^{p-2}\psi^2 d\A_t(u,u)
- \int 2 (\bar u \bar u_n^{p-2} + \kappa^{p-1}) \psi \, d\A_t(u,\psi) 
- (p-2) \int \bar u \bar u_n^{p-3}\psi^2 d\A_t(u,\bar u_n).
\end{align*}
By right strong locality and right linearity,
\begin{align*}
 \int \bar u \bar u_n^{p-3}\psi^2 d\A_t(u,\bar u_n) 
& =  \int_{\{\bar u \le n\}} \bar u \bar u_n^{p-3}\psi^2 d\A_t(u,\bar u_n) 
 =  \int_{\{\bar u \le n\}} \bar u_n^{p-2} \psi^2 d\A_t(u,u).
\end{align*}
Thus, by \eqref{eq:e^A >} and \eqref{eq:e^A <},
\begin{align*}
& \quad - \int d\A_t (u, \mathcal H'_n(u(t)) \psi^2) \\
& = - \int \bar u_n^{p-2}\psi^2 d\A_t(u,u)
- \int 2 (\bar u \bar u_n^{p-2}+\kappa^{p-1})\psi \, d\A_t(u,\psi) 
- (p-2) \int_{\{ \bar u \le n \}} \bar u_n^{p-2} \psi^2 d\A_t(u,u) \\
& \le 
-  a  \int \bar u_n^{p-2} \psi^2 d\Gamma(u,u) 
-  (p-2) a  \int_{\{\bar u \le n\}} \bar u_n^{p-2} \psi^2 d\Gamma(u,u) \\
& \quad + (p-1) \left[ \int b^2 u^2 \bar u_n^{p-2} \psi^2 d\mu  + \int w_1^2 \bar u_n^{p-2} \psi^2 d\mu \right] \\
& \quad + 2 \left( \bar a \left( \int \bar u_n^{p-2} \psi^2 \Gamma(u,u) \right)^{1/2} + \| e \|_{r,\infty} \| u \bar u_n^{\frac{p-2}{2}} \psi\|_{r'',2} + \|w_3\|_{r,\infty} \|\bar u_n^{\frac{p-2}{2}} \psi \|_{r'',2} \right)\\
& \quad  \left( \int \bar u^2 \bar u_n^{p-2} d\Gamma(\psi,\psi) \right)^{1/2} \\
& \quad + 2 \left( \bar a \left( \int \kappa^{p-2} \psi^2 \Gamma(u,u) \right)^{1/2} + \| e \|_{r,\infty} \| u\kappa^{\frac{p-2}{2}} \psi\|_{r'',2} + \|w_3\|_{r,\infty} \|\kappa^{\frac{p-2}{2}} \psi \|_{r'',2} \right) \\
& \quad \left( \int \bar \kappa^p d\Gamma(\psi,\psi) \right)^{1/2}.
\end{align*}
By right linearity, the chain rule, and \eqref{eq:e^B <},
\begin{align*}
& \quad - \int d\B_t(u,\H_n'(u)\psi^2) \\
& = - \int d\B_t(u,(\bar u \bar u_n^{p-2} + \kappa^{p-1}) \psi^2) \\
& = - \int \bar u_n^{p-2} \psi \, d\B_t(u,\bar u \psi)
- \int \kappa^{p-2} \psi \, d\B_t(u,\kappa \psi) \\
& \le 
 \| c \|_{r,\infty} \left( \int \bar u_n^{p-2} \psi^2 d\Gamma(u,u) \right)^{1/2} \|  \bar u_n^{\frac{p-2}{2}} \bar u \psi \|_{r'',2} \\
& \quad 
+ \left(  \|d\|_{r,\infty} \| u \bar u_n^{\frac{p-2}{2}} \psi \|_{2r',2}  +  \|w_2\|_{r,\infty} \| \bar u_n^{\frac{p-2}{2}} \psi \|_{2r',2} \right) \| \bar u_n^{\frac{p-2}{2}} \bar u \psi \|_{2r',2} \\
& \quad 
+ \| c \|_{r,\infty} \left( \int \kappa^{p-2} \psi^2 d\Gamma(u,u) \right)^{1/2} \|  \kappa^{\frac{p}{2}}  \psi \|_{r'',2} \\
& \quad 
+ \left(  \|d\|_{r,\infty} \| u \kappa^{\frac{p-2}{2}} \psi \|_{2r',2}  +  \|w_2\|_{r,\infty} \| \kappa^{\frac{p-2}{2}} \psi \|_{2r',2} \right) \| \kappa^{\frac{p}{2}} \psi \|_{2r',2}.
\end{align*}
Combining the above estimates and using the fact that
$\kappa \le \bar u_n$ for $n \ge \kappa$,
\begin{align*}
& \quad -  \e_t( u(t), \mathcal H'_n(u(t)) \psi^2 )  + a \int \bar u_n^{p-2} \psi^2 d\Gamma(u,u) 
+ (p-2) a \int_{\{\bar u \le n\}} \bar u_n^{p-2} \psi^2 d\Gamma(u,u) \\
& \le 
(p-1)  \int \left(b^2 + \frac{w_1^2}{\kappa^2}\right) \bar u^2 \bar u_n^{p-2} \psi^2 d\mu  \\
& \quad + 4 \left( \bar a \left( \int \bar u_n^{p-2} \psi^2 \Gamma(u,u) \right)^{1/2} + \| e \|_{r,\infty} \| u \bar u_n^{\frac{p-2}{2}} \psi\|_{r'',2} + \|w_3\|_{r,\infty} \|\bar u_n^{\frac{p-2}{2}} \psi \|_{r'',2} \right) \\
& \quad \left( \int \bar u^2 \bar u_n^{p-2} d\Gamma(\psi,\psi) \right)^{1/2} \\
& \quad
+ 2\| c \|_{r,\infty} \left( \int \bar u_n^{p-2} \psi^2 d\Gamma(u,u) \right)^{1/2} \|  \bar u_n^{\frac{p-2}{2}} \bar u \psi \|_{r'',2} \\
& \quad 
+ 2\left(  \|d\|_{r,\infty} \| u \bar u_n^{\frac{p-2}{2}} \psi \|_{2r',2}  +  \|w_2\|_{r,\infty} \| \bar u_n^{\frac{p-2}{2}} \psi \|_{2r',2} \right) \| \bar u_n^{\frac{p-2}{2}} \bar u \psi \|_{2r',2}.
\end{align*}
By \eqref{eq:lorentz power} and \eqref{eq:lorentz-hoelder},
\begin{align*}
\| b^2 \bar u^2 \bar u_n^{p-2} \psi^2 \|_{1,1}
 \le 
\|b \bar u \bar u_n^{\frac{p-2}{2}} \psi\|_{2,2}^2 
 \le 
\| b \|_{r,\infty}^2 \|\bar u \bar u_n^{\frac{p-2}{2}} \psi\|_{r'',2}^2,
\end{align*}
and similarly for $\frac{w_1}{\kappa}$ in place of $b$.
Now the assertion follows from Young's inequality.
\end{proof}

\begin{lemma} \label{lem:pre-H.1b}
If $\e_t$ is a quasilinear form adapted to $(\e,\F)$ with $m=1$ then, for any $t \in \R$,
\begin{align*}
\begin{split}
& \quad 
\frac{1-p}{|1-p|} \e_t( u(t), \mathcal H'_{\varepsilon}(u(t)) \psi^2 )  
+ |p-1| \frac{a}{4} \int \bar u_{\varepsilon}^{p-2} \psi^2 d\Gamma(u,u) \\
& \le 
 \left( |p-1| \left(\|b\|_{r,\infty}^2 + \bigg\|\frac{w_1}{\kappa} \bigg\|_{r,\infty}^2 \right) + \| e \|_{r,\infty}^2 + \bigg\|\frac{w_3}{\kappa} \bigg\|_{r,\infty}^2 + \frac{1}{|p-1|a} \|c\|_{r,\infty}^2  \right) \|\bar u_{\varepsilon}^{p/2} \psi\|_{r'',2}^2 \\
& \quad 
+ \left( \| d \|_{r,\infty} + \bigg\|\frac{w_2}{\kappa}\bigg\|_{r,\infty} \right) \|\bar u_{\varepsilon}^{\frac{p}{2}} \psi \|_{2r',2}^2 
+ \left( \frac{4\bar a^2}{a |p-1|} + 1 \right) \int  \bar u_{\varepsilon}^p  d\Gamma(\psi,\psi).
\end{split}
\end{align*}
for all non-negative locally bounded $u \in \F$, $t \in \R$, $p \in (-\infty,1-\eta) \cup (1+\eta,2)$.
\end{lemma}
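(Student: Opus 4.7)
The plan follows the strategy of Lemma \ref{lem:pre-H.1a}, with two simplifications afforded by $p<2$: the derivative $\mathcal{H}'(v)=\bar v^{p-1}$ is $\mathcal{C}^1$ on $(0,\infty)$ without any truncation at $n$, and $\bar u_\varepsilon\ge\varepsilon>0$ makes $\mathcal{H}'(u_\varepsilon)$ a $\mathcal{C}^1$ function of $u$ on $[0,\infty)$. As before, reduce to $m=1$ by linearity. First, decompose $\e_t=\int d\A_t+\int d\B_t$ and use the right-chain rule (applied to an auxiliary $\mathcal{C}^1$ function $\tilde\Phi$ with $\tilde\Phi(0)=0$ that agrees with $x\mapsto (x+\kappa+\varepsilon)^{p-1}-(\kappa+\varepsilon)^{p-1}$ on $[0,\infty)$, the additive constant being killed by right-strong-locality) together with the right-product rule to obtain
\begin{align*}
\int d\A_t(u,\bar u_\varepsilon^{p-1}\psi^2) = (p-1)\int \bar u_\varepsilon^{p-2}\psi^2 \, d\A_t(u,u) + 2\int \bar u_\varepsilon^{p-1}\psi\, d\A_t(u,\psi).
\end{align*}
The sign factor $\frac{1-p}{|1-p|}$ is designed so that, after multiplication, the leading quadratic term acquires coefficient $-|p-1|$; the coerciveness \eqref{eq:e^A >} then yields $+|p-1|a\int \bar u_\varepsilon^{p-2}\psi^2\, d\Gamma(u,u)$ on the left plus $b^2u^2\bar u_\varepsilon^{p-2}\psi^2$ and $w_1^2\bar u_\varepsilon^{p-2}\psi^2$ zero-order errors, which I bound by $(\|b\|_{r,\infty}^2+\|w_1/\kappa\|_{r,\infty}^2)\|\bar u_\varepsilon^{p/2}\psi\|_{r'',2}^2$ using the pointwise inequalities $u\le\bar u_\varepsilon$, $\kappa\le\bar u_\varepsilon$ together with \eqref{eq:lorentz power}--\eqref{eq:lorentz-hoelder}.

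Next I treat the two cross terms. For the $\A_t$ cross $\pm2\int \bar u_\varepsilon^{p-1}\psi\, d\A_t(u,\psi)$, apply \eqref{eq:e^A <} with the splitting $f=\bar u_\varepsilon^{(p-2)/2}\psi$, $g=2\bar u_\varepsilon^{p/2}$, so that $fg=2\bar u_\varepsilon^{p-1}\psi$ and $\int g^2\,d\Gamma(\psi,\psi)=4\int \bar u_\varepsilon^p\, d\Gamma(\psi,\psi)$. The mixed factors $\|fu\|_{r'',2}$ and $\|f\|_{r'',2}$ collapse to $\|\bar u_\varepsilon^{p/2}\psi\|_{r'',2}$ after using $u\le\bar u_\varepsilon$ and $\bar u_\varepsilon^{(p-2)/2}\le\bar u_\varepsilon^{p/2}/\kappa$ (valid since $p-2<0$ and $\bar u_\varepsilon\ge\kappa$), the latter bound producing the extra $1/\kappa$ in the $w_3$ contribution. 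For the $\B_t$ piece, the one-sided right-product rule lets me factor $\bar u_\varepsilon^{p-1}\psi^2=(\bar u_\varepsilon\psi)(\bar u_\varepsilon^{p-2}\psi)$ and write $\int d\B_t(u,\bar u_\varepsilon^{p-1}\psi^2)=\int \bar u_\varepsilon^{p-2}\psi\, d\B_t(u,\bar u_\varepsilon\psi)$; then \eqref{eq:e^B <} with $v=\bar u_\varepsilon\psi$ and the same splitting of $(f,g)$ gives every factor $\|gv\|_{\cdot,\cdot}$ equal to $\|\bar u_\varepsilon^{p/2}\psi\|_{\cdot,\cdot}$.

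Three Young applications then absorb the $d\Gamma(u,u)$ factors into the coercive term: I split $|p-1|a\int \bar u_\varepsilon^{p-2}\psi^2\, d\Gamma(u,u)$ into four quarters, one remaining on the left (matching $\tfrac{|p-1|a}{4}$ in the conclusion), one absorbing the $\bar a$ piece of the $\A$ cross via $2\bar a\, G^{1/2}H^{1/2}\le \tfrac{|p-1|a}{4}G+\tfrac{4\bar a^2}{|p-1|a}H$, one absorbing the $\|c\|_{r,\infty}$ piece of the $\B$ term via $\|c\|_{r,\infty}G^{1/2}N\le \tfrac{|p-1|a}{4}G+\tfrac{1}{|p-1|a}\|c\|_{r,\infty}^2N^2$, and the last providing slack for any residual cross. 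The $e$, $w_3$ contributions to the $\A$ cross and the $d$, $w_2$ contributions of the $\B$ estimate carry no $d\Gamma(u,u)$ factor and are handled by elementary $2AB\le A^2+B^2$, going directly into the coefficients of $\|\bar u_\varepsilon^{p/2}\psi\|_{r'',2}^2$, $\|\bar u_\varepsilon^{p/2}\psi\|_{2r',2}^2$, and an extra copy of $\int \bar u_\varepsilon^p\, d\Gamma(\psi,\psi)$; these aggregate to the stated constants.

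The main obstacle is bookkeeping rather than any deep estimate: one must track $|p-1|$ through the Young inequalities (which is precisely why the hypothesis excludes $p$ near $1$, so that $|p-1|^{-1}$ is bounded by $\eta^{-1}$), and apply the pointwise bounds $u\le \bar u_\varepsilon$ and $\kappa\le \bar u_\varepsilon$ at the right moments to collapse every mixed Lorentz norm to $\|\bar u_\varepsilon^{p/2}\psi\|_{\cdot,\cdot}$. A minor subtlety is that the right-chain rule of Definition \ref{def:quasilinear form}(vii) requires $\Phi(0)=0$; subtracting the constant $(\kappa+\varepsilon)^{p-1}$ and invoking right-strong-locality with $d\A_t(u,\mathrm{const})=0$ handles this point.
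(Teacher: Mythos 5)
Your proposal is correct and follows essentially the same route as the paper's proof: the identity $\int d\A_t(u,\bar u_{\varepsilon}^{p-1}\psi^2)=(p-1)\int \bar u_{\varepsilon}^{p-2}\psi^2\,d\A_t(u,u)+2\int \bar u_{\varepsilon}^{p-1}\psi\,d\A_t(u,\psi)$ via chain/product rule and right strong locality, then \eqref{eq:e^A >}, \eqref{eq:e^A <}, \eqref{eq:e^B <} with the splitting $f=\bar u_{\varepsilon}^{(p-2)/2}\psi$, and finally $\kappa\le\bar u_{\varepsilon}$, $u\le\bar u_{\varepsilon}$, the Lorentz--H\"older inequalities and Young's inequality. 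The only cosmetic difference is that you apply \eqref{eq:e^B <} with $v=\bar u_{\varepsilon}\psi$ (so $g=\bar u_{\varepsilon}^{(p-2)/2}$, not the literal ``same $g$'' as in the $\A$-cross term) whereas the paper takes $v=\psi$ after writing $d\B_t(u,\bar u_{\varepsilon}^{p-1}\psi^2)=\bar u_{\varepsilon}^{p-1}\psi\,d\B_t(u,\psi)$; both factorizations produce identical bounds, and your explicit quartering of the coercive term is exactly the bookkeeping the paper leaves implicit.
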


\begin{proof}
By the chain rule, right strong locality and right linearity, we have
\begin{align*}
 \int d\A_t (u, \mathcal H'_{\varepsilon}(u) \psi^2) 
& = (p-1) \int \bar u_{\varepsilon}^{p-2}\psi^2 d\A_t(u,u)
+ \int 2 \bar u_{\varepsilon}^{p-1}  \psi \, d\A_t(u,\psi).
\end{align*}
Thus, by \eqref{eq:e^A >} and \eqref{eq:e^A <},
\begin{align*}
& \quad \frac{1-p}{|1-p|} \int d\A_t (u, \mathcal H'_{\varepsilon}(u) \psi^2) \\
& \le 
|p-1| \left[-  a  \int \bar u_{\varepsilon}^{p-2} \psi^2 d\Gamma(u,u)  
 +  \int b^2 u^2 \bar u_{\varepsilon}^{p-2} \psi^2 d\mu  + \int w_1^2 \bar u_{\varepsilon}^{p-2} \psi^2 d\mu \right] \\
& \quad 
+ 2 \left( \bar a \left( \int \bar u_{\varepsilon}^{p-2} \psi^2 \Gamma(u,u) \right)^{1/2} + \| e \|_{r,\infty} \|u \bar u_{\varepsilon}^{\frac{p-2}{2}} \psi\|_{r'',2} + \|w_3\|_{r,\infty} \| \bar u_{\varepsilon}^{\frac{p-2}{2}} \psi \|_{r'',2} \right) \\
& \quad \left( \int  \bar u_{\varepsilon}^p  d\Gamma(\psi,\psi) \right)^{1/2}.
\end{align*}
By the chain rule and \eqref{eq:e^B <},
\begin{align*}
 \left|\int d\B_t(u,\H_{\varepsilon}'(u)\psi^2) \right| 
& = \left| \int \bar u_{\varepsilon}^{p-1} \psi \, d\B_t(u,\psi) \right| \\
& \le 
\|c\|_{r,\infty} \left( \int \bar u_{\varepsilon}^{p-2} \psi^2 d\Gamma(u,u) \right)^{1/2} \|\bar u_{\varepsilon}^{\frac{p}{2}} \psi\|_{r'',2} \\
& \quad 
+ \left( \| d \|_{r,\infty} \|u \bar u_{\varepsilon}^{\frac{p-2}{2}} \psi \|_{2r',2} + \|w_2\|_{r,\infty} \| \bar u_{\varepsilon}^{\frac{p-2}{2}} \psi\|_{2r',2} \right)
\| \bar u_{\varepsilon}^{\frac{p}{2}} \psi \|_{2r',2}.
\end{align*}
Now the assertion follows from the fact that $\kappa \le \bar u$, \eqref{eq:lorentz power}, \eqref{eq:lorentz-hoelder}, and Young's inequality.
\end{proof}

\begin{remark} \label{rem:pre-H}
It is clear that Lemma \ref{lem:pre-H.1a} and Lemma \ref{lem:pre-H.1b} generalize in the obvious way to the case $m>1$.
\end{remark}

\begin{proof}[Proof of Proposition \ref{prop:adapted forms H.1 H.2}]
This is immediate from Lemma \ref{lem:pre-H.1a}, Lemma \ref{lem:pre-H.1b}, Remark \ref{rem:pre-H}, H\"older's inequality and \eqref{eq:A1 cutoff}.
\end{proof}

\begin{proof}[Proof of Theorem \ref{thm:scale-invariant PHI}]
Apply Proposition \ref{prop:adapted forms H.1 H.2}, Lemma \ref{lem:pre-H.1a}, Lemma \ref{lem:pre-H.1b}, Remark \ref{rem:pre-H}, and Theorem \ref{thm:PHI} with $\kappa = 0$.
\end{proof}

\begin{theorem}[Maximum Principle] \label{thm:max principle}
Let $\e_t$ be a quasilinear form adapted to $(\e,\F)$. Suppose $(\e,\F)$ satisfies (A1)-(A2), volume doubling and Poincar\'e inequality.
Let $u$ be a local weak solution of the heat equation for $\e_t$ in $Q=(s,T) \times U$ where $U \subset X$ is an open subset. Let $M \in \R$ and suppose $(u+M)^+(t) \in \F^0(U)$ for every $t \in (s,T)$ and $(u+M)^+(t) \to 0$ in $L^2(U)$ as $t \to 0$. 
 Then
\begin{align*}
u(t,x) \le  M + C \big( (\|b\| + \|d\|)|M| +  \kappa \big) \quad \mbox{ a.e.~in }Q,
\end{align*}
where $\kappa =  \|w_1\| + \|w_2\|$ and the constant $C$ depends only on $(T-s)$, $\mu(U)$, $\gamma$, $\nu$, $C_{\mbox{\tiny{SI}}}$, and the norms of the coefficients in their respective spaces.
\end{theorem}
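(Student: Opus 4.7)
The plan is to apply the mean value estimate of Theorem \ref{thm:MVE subsol p>2} to the truncation $w := (u-M)^+$, regarded as a subsolution of a modified equation whose effective source constants absorb the $M$-dependent contributions coming from the lower order coefficients $b_i$ and $d_i$. We interpret the boundary/initial hypothesis so that $w(t) \in \F^0(U)$ for every $t \in (s,T)$ and $w(t) \to 0$ in $L^2(U)$ as $t \downarrow s$; this makes $w$ an admissible compactly supported test function and forces the initial term in the weak formulation to vanish.

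First I would derive an energy inequality for $w$. Steklov averaging in time together with the chain-rule argument of Section \ref{ssec:steklov chain rule} applied to the Lipschitz function $\mathcal{J}(u) = \tfrac12((u-M)^+)^2$ (via smooth approximation of $\mathcal{J}$) yields for every $t \in (s,T)$
\begin{align*}
\tfrac12 \|w(t)\|_2^2 + \int_s^t \e_r(u, w)\,dr \le 0.
\end{align*}
By right-strong-locality and the chain rule for $\A_r$ one has $d\A_r(u, w) = \mathbf{1}_{\{u > M\}}\,d\A_r(u, u)$, so coerciveness \eqref{eq:e^A >} yields
\begin{align*}
\int d\A_r(u, w) \ge a \int d\Gamma(w, w) - \sum_i \int_{\{u > M\}} b_i^2 u^2 \,d\mu - \sum_i \int_{\{u > M\}} w_{1,i}^2 \,d\mu.
\end{align*}
The $\B_r$-contribution is bounded by the sector condition \eqref{eq:e^B <} with $f = \mathbf{1}_{\{u > M\}}$ and $g = 1$, producing error terms that involve $\|c_i\|_{r_i,\infty}$, $\|d_i\|_{r_i,\infty}$, $\|w_{2,i}\|_{r_i,\infty}$ and Lorentz norms of $u$ and $w$. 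On $\{u > M\}$ I would write $u = w + M$, so that $u^2 \le 2 w^2 + 2 M^2$ and $u \le w + |M|$, splitting every error term into (i) a quadratic-in-$w$ piece, absorbed into $a \int d\Gamma(w,w)$ by Young's inequality, the Lorentz--H\"older inequality \eqref{eq:lorentz-hoelder}, and the weighted Sobolev inequality \eqref{eq:wSI}, and (ii) a source piece bounded by $C(|M|(\|b\|_* + \|d\|_*))^2 + C \kappa^2$, where $\|\cdot\|_*$ denotes the relevant $L^{q_i}(I \to L^{r_i,\infty}(U))$-norm. In this way one recovers the conclusion of Lemma \ref{lem:pre-H.1a} for $w$, with the floor $\kappa$ replaced by the enlarged $\widetilde{\kappa} := C'\bigl(|M|(\|b\|_* + \|d\|_*) + \kappa\bigr)$.

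With this shifted form of hypothesis H.1a in hand, I would run the Moser iteration of Lemma \ref{lem:sup_t estimate}, Lemma \ref{lem:gain of integrability}, and Theorem \ref{thm:MVE subsol p>2} on $\bar w := w + \widetilde\kappa$ over shrinking time-space cylinders exhausting $Q$. The vanishing initial datum from the energy inequality forces the $L^2$-norm of $w$ to be dominated by a constant multiple of $\widetilde\kappa$, and the iteration upgrades this to $\sup_Q w \le C''\widetilde\kappa$, which rearranges to the stated bound $u \le M + C\bigl(|M|(\|b\|+\|d\|) + \kappa\bigr)$ a.e.\ on $Q$. The main obstacle is the bookkeeping in the derivation of the shifted H.1a: one has to track how the $M^2$-source produced by the splitting $u^2 \le 2w^2 + 2M^2$ combines, via Lorentz--H\"older, with the coefficient norms $\|b\|_*$ and $\|d\|_*$ so that, after the $L^\infty$ Moser upgrade and taking a square root, the dependence on $|M|$ is genuinely linear rather than quadratic. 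A secondary technical point, already present in Section \ref{ssec:steklov chain rule}, is the rigorous justification of the chain rule in time for the non-$C^1$ function $\mathcal{J}$, carried out by smooth approximation.
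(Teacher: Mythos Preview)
Your proposal is correct and follows essentially the same strategy as the paper's proof. The only difference is packaging: the paper first treats the case $M=0$ and then, for general $M$, observes that $v := u - M$ is a local weak subsolution for the \emph{shifted} quasilinear form $\e_t^M(f,g) := \e_t(f+M,g)$, which is itself adapted to $(\e,\F)$ with the same structural constants except that $\kappa$ is replaced by $\kappa^M = (\|b\| + \|d\|)\,|M| + \|w_1\| + \|w_2\|$ (substituting $u \mapsto u+M$ in the inequalities \eqref{eq:e^A >}--\eqref{eq:e^B <} shifts the inhomogeneous terms by precisely $|M|$ times the $b$- and $d$-type coefficients). The paper then invokes Theorem \ref{thm:MVE subsol p>2} and the argument of Lemma \ref{lem:sup_t estimate} directly for this new adapted form, the boundary/initial hypothesis allowing the cutoffs $\psi,\chi$ to be dropped so that the only surviving right-hand side is $(T-s)\mu(U)(\kappa^M)^2$. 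Your splitting $u = w + M$ on $\{u>M\}$ and tracking the resulting $M^2$-source through the Caccioppoli estimate is exactly the computation hidden inside the paper's one-line remark that $\e_t^M$ is again adapted; both routes produce the same enlarged floor $\widetilde\kappa = \kappa^M$ and the same final bound. The paper's formulation is slightly more economical because it avoids re-deriving H.1a by hand, while yours has the virtue of making explicit where the linear dependence on $|M|$ comes from.
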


\begin{proof}
We first prove the maximum principle in the case $M=0$.
Let $(t,x) \in Q$. Choose an appropriate increasing sequence of neighborhoods $(x,t) \in Q_{\delta'}^- \subsetneq Q_{\delta}^- \subset Q$  satisfying \eqref{eq:C_3}, for all $\frac{1}{2} \le \delta' < \delta \le 1$.
Applying the mean value estimate of Theorem \ref{thm:MVE subsol p>2} and Lemma \ref{lem:|||f|||},
\begin{align*}
\bar u^2 (t,x)
& \le C'(\nu) A_0^{\frac{\nu + 2}{2}} |||\bar u|||_{Q_{\delta'}^-}^2 \\
& \le 2 C'(\nu) A_0^{\frac{\nu + 2}{2}} |I_{\delta'}^-|^{\gamma}  \left( C_{\mbox{\tiny{SI}}}  \int_{I_{\delta}^-} \int  \psi^2 d\Gamma(u) dt +  \sup_{t \in I_{\delta}^-} \| \bar u \|^2_{L^2(B_{\delta})} \right) \\
\end{align*}
where
\[ A_0 :=  \frac{ 32 |I_{\delta}^-|^{\gamma} C_{\mbox{\tiny{SI}}} }{a} (C_1 + 1_{\{\kappa>0\}}) . \]
To estimate the right hand side, we repeat the reasoning in the proof of Lemma \ref{lem:sup_t estimate}, except that we can omit $\psi$ and $\chi$ due to the boundary condition and therefore $K=(T-s) \mu(B_1) \kappa^2$.
\begin{align*}
\sup_{Q_{\delta''}^-} \bar u^2 
& \le C (T-s) \mu(B_1) \kappa^2,
\end{align*}
where the constant $C$ depends only on $(T-s)$, $\mu(U)$, $C_{\mbox{\tiny{SI}}}$, $\nu$, $\gamma$, and the norms of the coefficients in their respective spaces. This completes the proof in the case $M=0$.

If $M \ne 0$, notice that $u-M$ satisfies the zero boundary conditions, and $u-M$ is a local weak subsolution to the heat equation for the quasilinear form
\[ \e_t^M(f,g):=\e_t(f+M,g). \]
Since $(\e,\F)$ is also adapted to $(\e,\F)$ we can now apply the case $M=0$ to $u-M$. Just note that $\kappa$ must be replaced by $\kappa^M = (\|b\| + \|d\|)|M| + \|w_1\| + \|w_2\|$, see \cite[Proof of Theorem 1]{AS67}. 
\end{proof}

Further standard applications of the parabolic Harnack inequality apply to the present setting, for instance, the elliptic Harnack inequality, and various pointwise estimates for weak solutions. Since these applications are well-known and to avoid repetition we keep this section short and only state the following pointwise estimate. For further results see, e.g., \cite[Theorem 5']{AS67} and \cite[Section 5.4.3]{SC02}. 

\begin{theorem}[Pointwise estimate]
Let $\e_t$ be a quasilinear form adapted to $(\e,\F)$. Suppose $(\e,\F)$ satisfies (A1)-(A2), volume doubling and Poincar\'e inequality up to scale $R>0$.
Then there is a constant $C \in (0,\infty)$ such that the following pointwise inequality holds. Suppose there is a continuous curve of length $d$ joining two points $x,y \in X$. Let $U$ be a $\delta$-neighborhood of this curve where $\delta > 0$.
Let $0 < s < t < T$ and let $u$ be a non-negative local weak solution of the heat equation for $\e_t$ in $Q=(0,T) \times U$. Then
\begin{align*}
\log \frac{u(s,x) + \kappa}{ u(t,y) + \kappa } \le C \left( 1 + \frac{t-s}{R^2} + \frac{t-s}{s} + \frac{t-s}{\delta^2} + \frac{d^2}{t-s} \right), 
\end{align*}
where $\kappa = \sum_i \|w_{1,i} \| + \|w_{2,i}\| + \|w_{3,i}\|$.
\end{theorem}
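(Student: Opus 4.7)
The plan is to chain applications of the scale-invariant parabolic Harnack inequality (Theorem \ref{thm:scale-invariant PHI}) along the given curve. Throughout, work with $\bar u := u + \kappa$, which by that theorem satisfies the Harnack bound $\sup_{Q^-} \bar u \le C_{\mbox{\tiny{PHI}}} \inf_{Q^+} \bar u$ on every admissible parabolic cylinder of scale $r \le R$ whose base ball lies inside $U$ and whose time base lies inside $(0,T)$. Pointwise values of $u$ are interpreted through its H\"older continuous representative furnished by Theorem \ref{thm:Hoelder}.

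Fix the radius
\[ r := c \min\{ R,\ \delta,\ \sqrt{s},\ \sqrt{t-s} \}, \]
with $c$ a small constant depending only on the Harnack parameters $\tau$ and the scaling constant $\delta \in (0,1)$ from Theorem \ref{thm:scale-invariant PHI}. The four upper bounds on $r$ ensure respectively that (i) the scale-invariant Harnack applies, (ii) every ball of radius $r$ centered on the curve is contained in $U$, (iii) the first Harnack cylinder can be positioned with base strictly inside $(0,T)$ --- this is the source of the $(t-s)/s$ term --- and (iv) at least one full time-step of length $\tau r^2$ fits inside $(s,t)$. Set
\[ N := \left\lceil \frac{d}{\lambda r} \right\rceil + \left\lceil \frac{t-s}{\lambda r^2} \right\rceil \]
for a small $\lambda > 0$, parametrize the curve by arclength and choose points $x_0=x, x_1,\ldots, x_N = y$ with $d(x_i,x_{i+1}) \le d/N$, together with equispaced times $s=\tau_0 < \tau_1 < \cdots < \tau_N = t$. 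The spacings $d(x_i,x_{i+1}) \le \lambda r$ and $\tau_{i+1}-\tau_i \le \lambda r^2$ make it possible, for each $i$, to pick a center $z_i$ on the curve and a base time $\sigma_i$ so that $(\tau_i, x_i)$ lies in the past sub-cylinder and $(\tau_{i+1}, x_{i+1})$ lies in the future sub-cylinder of the Harnack cylinder $(\sigma_i, \sigma_i + \tau r^2) \times B(z_i,r)$, with this full cylinder contained in $Q$.

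Iterating Harnack along the chain gives $\bar u(s,x) \le C_{\mbox{\tiny{PHI}}}^N \bar u(t,y)$, hence
\[ \log \frac{\bar u(s,x)}{\bar u(t,y)} \le N \log C_{\mbox{\tiny{PHI}}}. \]
By the AM-GM inequality $d/r \le \tfrac{1}{2}\bigl(d^2/(t-s) + (t-s)/r^2\bigr)$ combined with $1/r^2 \lesssim 1/R^2 + 1/\delta^2 + 1/s + 1/(t-s)$ from the definition of $r$, one obtains
\[ N \lesssim 1 + \frac{t-s}{R^2} + \frac{t-s}{\delta^2} + \frac{t-s}{s} + \frac{d^2}{t-s}, \]
which, after absorbing $\log C_{\mbox{\tiny{PHI}}}$ into the constant $C$, yields the stated inequality.

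The main obstacle is the geometric bookkeeping for the chain: simultaneously arranging that each Harnack cylinder sits inside $U \times (0,T)$ while its past and future sub-cylinders capture the prescribed pair of chain points at a common scale $r$. The constraint $r \le c\sqrt{s}$ (binding at the first cylinder, which must avoid $t=0$) and $r \le c\delta$ (so that the base ball does not exit the $\delta$-neighborhood) are precisely what force the $(t-s)/s$ and $(t-s)/\delta^2$ terms in the final bound; the balance between the spatial chain and the temporal chain via AM-GM produces the $d^2/(t-s)$ term. Beyond this choice of parameters, the argument is a routine iteration following \cite[Section 5.4.3]{SC02} and \cite[Theorem 5$'$]{AS67}.
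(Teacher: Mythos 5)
Your overall strategy --- iterating the scale-invariant parabolic Harnack inequality of Theorem \ref{thm:scale-invariant PHI} for $\bar u = u+\kappa$ along a chain of cylinders of scale $r \asymp \min\{R,\delta,\sqrt{s},\sqrt{t-s}\}$, which is where the terms $(t-s)/R^2$, $(t-s)/\delta^2$, $(t-s)/s$ come from --- is exactly the paper's approach (the paper itself only cites \cite[Proof of Corollary 5.4.4]{SC02} for the details). However, the chain construction you describe cannot be assembled. In Theorem \ref{thm:scale-invariant PHI} the two windows of a cylinder of scale $\rho$ occupy the time intervals $(\sigma+\frac{1}{4}\tau\rho^2,\sigma+\frac{1}{2}\tau\rho^2)$ and $(\sigma+\frac{3}{4}\tau\rho^2,\sigma+\tau\rho^2)$, so any time in $Q^-$ and any time in $Q^+$ differ by more than $\frac{1}{4}\tau\rho^2$. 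With your $N \ge (t-s)/(\lambda r^2)$ and equispaced times, $\tau_{i+1}-\tau_i \le \lambda r^2 < \frac{1}{4}\tau r^2$ for small $\lambda$, so $(\tau_i,x_i)$ and $(\tau_{i+1},x_{i+1})$ can never lie in the past and future windows of a single scale-$r$ cylinder: making the time spacing small is an obstruction, not a help, because each Harnack application must advance time by an amount \emph{comparable} to the cylinder height. No choice of $\lambda$ fixes this: $N \ge d/(\lambda r)$ steps, each consuming time $>\frac{1}{4}\tau r^2$, would require total time $\ge \tau r d/(4\lambda)$, which exceeds $t-s$ once $d > 4\lambda (t-s)/(\tau r)$. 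Nor can you shrink the cylinders to match your time step, for then the radius is $\rho \asymp \sqrt{(t-s)/N}$ and the requirement $d(x_i,x_{i+1}) \le d/N \lesssim \rho$ forces $N \gtrsim d^2/(t-s)$, which your $N \asymp d/r + (t-s)/r^2$ does not supply when $d > (t-s)/r$ (e.g.\ $R,\delta,s,t-s \asymp 1$ and $d \to \infty$ gives $N \asymp d$, while $\asymp d^2$ steps are needed).

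The repair is standard and leaves your final bound untouched: take $N \asymp 1 + \frac{d^2}{t-s} + \frac{t-s}{r^2}$ from the outset, set $h = (t-s)/N$, use per-step scale $\rho$ with $\tau\rho^2 \asymp h$ (so consecutive times can indeed be placed in the $Q^-$ and $Q^+$ windows of one cylinder), and verify that this $N$ yields both $\rho \le r$ and $d/N \lesssim \rho$; iterating Theorem \ref{thm:scale-invariant PHI} then gives $\log\big(\bar u(s,x)/\bar u(t,y)\big) \le N \log C_{\mbox{\tiny{PHI}}}$, and bounding $1/r^2$ by $1/R^2 + 1/\delta^2 + 1/s + 1/(t-s)$ produces the stated estimate. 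In other words, the term $d^2/(t-s)$ must appear as the number of chain steps itself --- reflecting that a Harnack step at scale $\rho$ buys spatial progress $\lesssim \rho$ at a time cost $\gtrsim \rho^2$ --- rather than emerging via AM--GM from $d/r$; as written, your AM--GM is applied to a chain that does not exist. With this correction your argument coincides with the proof the paper intends.
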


\begin{proof}
This follows by applying the parabolic Harnack inequality of Theorem \ref{thm:scale-invariant PHI} successively along a Harnack chain connecting $x$ to $y$ within $U$. For details, we refer to \cite[Proof of Corollary 5.4.4]{SC02}. 
\end{proof}

\subsection{The structural hypotheses of Aronson-Serrin}
Let $(M^n,g)$ be a smooth complete Riemannian manifold without boundary with Riemannian volume element $d\mu$. 
Let $\e(u,g) = \int_{M^n} \nabla u \, \nabla g \, d\mu$ for $u,g \in \F = W^{1,2}(M^n)$.
Suppose that $M^n$ has a lower Ricci curvature bound. Then the volume doubling property and the Poincar\'e inequality are known to hold locally. It is also clear that suitable cutoff functions exist in the present setting.  In particular, the weighted Sobolev inequality \eqref{eq:wSI} and the weighted Poincar\'e inequality \eqref{eq:weighted PI} hold locally. 

We define
 $$\e_t(u,g) : = \int_{M^n}  \A(x,t,u,\nabla u) \nabla g \, d\mu(x) + \int_{M^n} \mathcal B(x,t,u,\nabla u) g \, d\mu(x), $$ where $\mathcal A(x,t,u,\vec p)$ is a vector function, $\mathcal B(x,t,u,\vec p)$ is a scalar function, defined and measurable for all $t \in \R$, $x \in M^n$, and all values of $u$ and $\vec p$. We require $\A$ and $\B$ to satisfy the structural inequalities \cite[(2)]{AS67}, that is,
\begin{align*}
\vec p \cdot \mathcal A(x,t,u,\vec p) & \ge a |\vec p|^2 - b^2 u^2 - w_1^2 \\
|\mathcal B(x,t,u,\vec p)| & \le c |\vec p| + d |u| + w_2 \\
|\mathcal A(x,t,u,\vec p)| & \le \bar a |\vec p| + e |u| + w_3,
\end{align*}
where $a$ and $\bar a$ are positive constants and $b,c,d,e,w_1,w_2,w_3$ are non-negative functions of $(x,t)$ each contained in an $L^q(I \to L^{r,\infty}(M^n))$ space, where the pair $(r,q)$ may be different for each coefficient but must satisfy
\begin{align*}
& r>2 \mbox{  and  } \frac{n}{2r} + \frac{1}{q} < \frac{1}{2} \quad \mbox{ for } b,c,e,w_1,w_3, \\
& r>1 \mbox{  and  } \frac{n}{2r} + \frac{1}{q} < 1 \quad \mbox{ for } d,w_2.
\end{align*}

Then $\e_t$ is adapted to the Dirichlet form generated by the Laplace-Beltrami operator on $M^n$. Therefore, the scale-invariant parabolic Harnack inequality of Theorem \ref{thm:scale-invariant PHI}, as well as all results of Section \ref{sec:MVE} hold.

In the special case $M^n = \R^n$, we recover the parabolic Harnack inequality of \cite[Theorem 3]{AS67} but under weaker conditions on the coefficients: Indeed, the original conditions \cite[(2)]{AS67} involved $L^r$ in place of the Lorentz space $L^{r,\infty} \subset L^r$.

\subsection{Bilinear forms}
In this subsection, we relate the notion of quasilinear forms to the bilinear forms considered in \cite{LierlPHIf}.

Let $\e_t$ be a bilinear form satisfying Assumption 0 in \cite{LierlPHIf} with respect to a reference form $(\e,\F)$. Suppose the reference form satisfies (A1) and (A2) of Section \ref{ssec:Dirichlet spaces}. Formally, write
\begin{align*}
\int f \, d\A_t(u,g) 
& = \int f d\Gamma_t(u,g) + \r_t(fu,g), \\
\int f \, d\B_t(u,g) 
& = \l_t(fu,g) + \e^{\mbox{\tiny{sym}}}_t(fug,1).
\end{align*}
If $\A_t$ and $\B_t$ are signed measures and if $|\e^{\mbox{\tiny{sym}}}(fg,1)| \le C_* \|f\|_2 \|g\|_{\F}$ for all $f,g \in \F \cap \mathcal{C}_{\mbox{\tiny{c}}}$
then $\e_t$ is indeed a quasilinear form in the sense of Definition \ref{def:quasilinear form}. If in addition $\e_t$ satisfies Assumption 1 and Assumption 2 of \cite{LierlPHI} uniformly in $t$, then our structural hypotheses H.1 and H.2 are satisfied. This is remarkable because it seems that Assumptions 0, 1, 2 do not imply that $\e_t$ would be adapted to $(\e,\F)$ in the sense of Definition \ref{def:adapted form}.

\subsection{Doob's transform}

Consider a non-symmetric divergence form operator on $\R^n$,
\[ L = \sum_{i,j=1}^n \partial_i ( a_{ij} \partial_j), \]
with bounded measurable coefficients $a_{ij}$. Assume that its symmetric part is uniformly elliptic, that is, there exists a constant $c >0$ such that
\[ c |\xi|^2 \le \sum a_{i,j} \xi_i \xi_j, \quad \forall \xi,\zeta \in \R^n. \]
It is clear that the bilinear form associated with $L$ satisfies H.1 and H.2. We also have the Poincar\'e inequality and the localized Sobolev inequality.

Let $U$ be an unbounded inner uniform domain in $\R^n$ with harmonic profile $h>0$ for the Dirichlet Laplacian on $U$. By \cite{GyryaSC}, the Doob's transform $\e^{D,h^2}_U(f,f) = \sum_{i=1}^n \int_U |\partial_i f|^2 h^2 dx$ with domain $\F^h(U) = \frac{1}{h} \F(U)$ satisfies volume doubling and the Poincar\'e inequality. 

Let 
\[ \e^h(f,g) = \sum_{i,j=1}^n \int_U a_{ij} \partial_i (h f) \partial_j (h g) dx. \]

\begin{proposition}
The $h$-transformed bilinear form $\e_h$ is adapted to the reference Dirichlet form $(\e^{D,h^2}_U,\F^h(U))$.
\end{proposition}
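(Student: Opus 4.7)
My plan is to expand $\e^h(f,g)$ via the Leibniz rule $\partial_i(hf)=h\partial_i f+f\partial_i h$ into four bilinear terms, then group those containing $\partial_j v$ into a divergence-form part $\int d\A(u,v)$ and those where $v$ appears undifferentiated into a lower-order part $\int d\B(u,v)$:
\begin{align*}
d\A(u,v) & = \sum_{i,j} a_{ij}\, h^2\, \partial_i u\, \partial_j v\, dx + \sum_{i,j} a_{ij}\, h u\, \partial_i h\, \partial_j v\, dx,\\
d\B(u,v) & = \sum_{i,j} a_{ij}\, h v\, \partial_i u\, \partial_j h\, dx + \sum_{i,j} a_{ij}\, u v\, \partial_i h\, \partial_j h\, dx.
\end{align*}
Right-linearity, right-strong-locality of $\A$ (each summand carries a $\partial_j v$), and right-locality of $\B$ (each summand carries $v$ or $\partial_i u$, both vanishing on $\{v=0\}\cap\mathrm{supp}(u)$) are then immediate, and the right product and chain rules for $\A$, $\B$ follow from the Euclidean product and chain rules applied inside each integrand.

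Next I will verify the three structural inequalities of Definition~\ref{def:adapted form}, keeping in mind that here $d\mu=h^2\,dx$ and $d\Gamma(u,u)=h^2|\nabla u|^2\,dx$. Symmetric-part ellipticity gives $\sum a_{ij}h^2\partial_i u\,\partial_j u\ge c\,h^2|\nabla u|^2$, and Young's inequality on the cross term $\sum a_{ij}hu\,\partial_i h\,\partial_j u$ produces $(c/2)h^2|\nabla u|^2+Cu^2|\nabla h|^2$; rewriting $u^2|\nabla h|^2\,dx=(|\nabla h|/h)^2 u^2\,d\mu$ yields \eqref{eq:e^A >} with $a=c/2$, $b=C|\nabla h|/h$, and $w_{1,i}=0$. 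For the sector conditions I will apply Cauchy--Schwarz in $L^2(dx)$ to each of the four summands and rewrite in terms of $d\mu$ and $d\Gamma$; the representative estimate
\[ \int |fg|\,h|u|\,|\nabla h|\,|\nabla v|\,dx \le \bigl\||\nabla h|/h\bigr\|_{r,\infty}\,\|fu\|_{r'',2}\,\Bigl(\int g^2\,d\Gamma(v)\Bigr)^{1/2}, \]
obtained by Cauchy--Schwarz followed by the Lorentz--Hölder inequality \eqref{eq:lorentz-hoelder}, together with its analogues for the three remaining terms, identifies the coefficients as $\bar a=\|a\|_\infty$, $b=c=e\asymp|\nabla h|/h$, $d\asymp(|\nabla h|/h)^2$, and $w_{1,i}=w_{2,i}=w_{3,i}=0$.

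The remaining and delicate step will be to confirm that these coefficients lie in $L^{q}(I\to L^{r,\infty}(B))$ for pairs $(r,q)$ satisfying \eqref{eq:gamma}, where $B$ is a ball in the inner-uniform metric of $U$. Since the form is time-independent I may take $q=\infty$, so the task reduces to the spatial bound $\||\nabla h|/h\|_{L^{r,\infty}(B,d\mu)}<\infty$ for some $r>\nu/(2(1-\gamma))$. This is precisely the type of estimate supplied by the Gyrya--Saloff-Coste framework: the boundary Harnack principle on the inner-uniform completion controls $|\nabla h|/h$ on each inner-uniform ball, and the measure $d\mu = h^2\,dx$ is exactly the weight that renders the resulting Lorentz norms finite with the correct scaling. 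The hard part is therefore not the algebraic verification, which is just Leibniz plus ellipticity, but extracting the required weighted Lorentz bound on $|\nabla h|/h$ from the harmonic profile theory; once that is in hand, the adaptedness hypothesis is satisfied and the proposition follows.
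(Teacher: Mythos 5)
First, a point of comparison: the paper does not actually prove this proposition --- it explicitly defers the proof to a forthcoming paper --- so there is no in-text argument to measure your proposal against. Your algebraic reduction (Leibniz expansion, grouping into $\A$ and $\B$, ellipticity plus Young for \eqref{eq:e^A >}, Cauchy--Schwarz and Lorentz--H\"older for \eqref{eq:e^A <} and \eqref{eq:e^B <}) is the natural first move and is fine as far as it goes. The problem is the step you yourself flag as ``delicate'': it is not delicate, it is false as stated. The only bound the harmonic profile theory gives you is the interior gradient/Harnack estimate $|\nabla h|\le C\,h/\delta$, where $\delta(x)$ is the distance to $\partial U$, and the resulting coefficients are \emph{not} in the Lorentz classes required by Definition \ref{def:adapted form}. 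Test your claim on the model case $U=\R^n_+$, $h(x)=x_n$, $a_{ij}=\delta_{ij}$: then $d\mu=x_n^2\,dx$, the local Sobolev dimension of the weighted space near the boundary is $\nu=n+2$, and on a unit ball $B$ centered at a boundary point one computes $\mu(B\cap\{1/x_n>s\})\asymp s^{-3}$, so $\|\,|\nabla h|/h\,\|_{L^{r,\infty}(B,\mu)}<\infty$ only for $r\le 3$ and $\|(|\nabla h|/h)^2\|_{L^{r,\infty}(B,\mu)}<\infty$ only for $r\le 3/2$. Condition \eqref{eq:gamma} with $q=\infty$ forces $r\ge \nu/(2(1-\gamma))=(n+2)/(2(1-\gamma))$, so your zeroth-order coefficient $d\asymp(|\nabla h|/h)^2$ already fails for every $n\ge 2$ (you never address this coefficient at all --- you only discuss $|\nabla h|/h$), and the first-order coefficients $b,c,e\asymp|\nabla h|/h$ fail for $n\ge 4$; moreover, since in Lemma \ref{lem:pre-H.1a} these coefficients enter \emph{squared} against $\|\cdot\|_{r'',2}^2$, the effective Aronson--Serrin-type requirement is $\nu/(2r)+1/q<1/2$, i.e.\ $r>n+2$, which fails for every $n\ge 2$. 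So no choice of exponents rescues the decomposition you wrote down.

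This is consistent with the paper's own framing: the introduction states that Doob's transform produces an equation with \emph{unbounded} coefficients precisely not covered by integrability-based hypotheses, so the intended verification of adaptedness cannot proceed by placing $|\nabla h|/h$ in a Lorentz space. The missing idea is to exploit the harmonicity of $h$ rather than its gradient's integrability: for instance, $|\nabla h|^2\,dx=\tfrac12\Delta(h^2)\,dx$ weakly, so terms like $\int \phi\,|\nabla h|^2\,dx$ can be rewritten as $-\int h\,\nabla\phi\cdot\nabla h\,dx$ and absorbed by Cauchy--Schwarz into $\int \psi^2\,d\Gamma(u)$ and $\int u^2\,d\Gamma(\psi)$ (a Hardy-type bound relative to the reference form), and the decomposition into $\A_t$ and $\B_t$ --- which is not unique --- must be chosen so that the dangerous cross terms are organized to admit such cancellations. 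Your proposal, as written, has a genuine gap at exactly the point where all the work lies.
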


Similar results hold for bounded inner uniform domains and for locally inner uniform domains in Euclidean space, and more generally in Harnack-type Dirichlet spaces. The proof will be presented in a forthcoming paper by the author, along with new and sharp two-sided estimates for the Dirichlet heat kernel on $U$ associated with $L$.


\subsection{Kolmogorov-Fokker-Planck operator} \label{ssec:Kolmogorov}

Consider the operator 
\[ L u =  \sum_{i,j=1}^{m} \partial_{x_i} (a_{ij} \partial _{x_j} u) + \langle Bx,\nabla u\rangle, \]
where $m \le n$, the coefficients $a_{ij}$ are real-valued measurable functions of $(t,x) \in \R \times \R^n$ satisfying $a_{ij} = a_{ji}$ and 
\[ c |\xi|^2 \le \sum a_{ij} \xi_i \xi_j \le C |\xi|^2, \qquad \forall \xi \in \R^m, \]
and $B$ is a constant $n \times n$ real matrix such that there is a basis of $\R^n$ in which $B$ takes the form
\[ B = \begin{pmatrix}
0 & 0 & \ldots & 0 & 0 \\
B_1 & 0 & \ldots & 0 & 0 \\
0 & B_2 & \ldots & 0 & 0 \\
\vdots & \vdots & \ddots & \vdots & \vdots \\
0 & 0 & \ldots & B_{\tilde k} & 0
\end{pmatrix}, \]
where $B_j$ is an $m_{j} \times m_{j+1}$ matrix of rank $m_{j+1}$, $j = 1,2,\ldots,\tilde k$ with 
\[ m =: m_1 \ge m_2 \ge \ldots \ge m_{\tilde k + 1} \ge 1  \quad \mbox{ and }
m_1 + m_2 + \ldots + m_{\tilde k + 1} = n. \]
Then $L$ is associated with a quasilinear form $\e_t$ which satisfies H.1 with $\Gamma(u,u) = \sum_{i=1}^m |\partial_{x_i} u|^2$. Indeed, integrating by parts we can treat $<Bx,\nabla u>$ like a zero order term. However, H.2 is apparently not satisfied, indicating that H.2 has a structural content that is not already captured by H.1.

The Kolmogorov-Fokker-Planck operator $L$ is an example of a class of subelliptic operators to which the Moser iteration applies, see \cite[Example 1.2]{CiPo08} and \cite{CiPaPo08}.
By \cite[Theorem 3.3]{CiPo08}, a localized Sobolev inequality holds for weak solutions\footnote{We remark that the notion of weak solutions in \cite{CiPo08} is slightly stronger than considered here since \cite{CiPo08} requires the existence of a weak time-derivative that is locally in $L^2$.} to the heat equation associated with $L$ in $Q = (-1,1) \times B(x,1)$, for any $x \in \R^n$, $B_{\delta} = B(x,\delta)$. The localized Sobolev inequality implies the weighted Sobolev inequality \eqref{eq:wSI} of Definition \ref{def:wSI} with $k=2$.

A weighted Poincar\'e inequality for $L$ is not known. This is possibly related to the failure of H.2.

Nevertheless, H.1 and the Sobolev inequality are sufficient to obtain the mean value estimates of Theorem \ref{thm:MVE subsol p>2} and Theorem \ref{thm:MVE supsol}. For the operator $L$ given above, these mean value estimates are already known from \cite[Theorem 1.2 and Corollary 1.4]{PaPo04} and \cite[Theorem 1.4]{CiPo08}. However,  Theorem \ref{thm:MVE subsol p>2} and Theorem \ref{thm:MVE supsol} also apply to Kolmogorov-type operators on more general spaces, such as Euclidean complexes or Riemannian manifolds. For instance, if $(M^n,g)$ is a smooth Riemannian manifold then we can define a Kolmogorov-type operator on $M^n \times M^n$ as
\[ Lu = L_{\mathcal V} u - v \nabla_{\mathcal H} u,
\] 
 where $L_{\mathcal V}$ is a vertical uniformly elliptic diffusion operator and $\nabla_{\mathcal H}$ is the horizontal gradient.


\def\cprime{$'$} \def\cprime{$'$}

Janna Lierl, University of Connecticut, 341 Mansfield Road, Storrs, CT 06250.
janna.lierl@uconn.edu

\end{document}